\theoremstyle{plain}
\newtheorem{question}{Question}
\newtheorem{remark}{Remark}[section]
\newtheorem{example}{Example}[section]
\newtheorem{definition}{Definition}[section]
\numberwithin{equation}{section}
\newtheorem{theorem}{Theorem}[section]
\newtheorem{corollary}[theorem]{Corollary}
\newtheorem{lemma}[theorem]{Lemma}
\newtheorem{proposition}[theorem]{Proposition}
\newtheorem*{ac}{Acknowledgement}
\newtheorem{main theorem}{Main Theorem}
\newcommand{\fF}{\mathfrak{F}}
\newcommand{\sP}{\mathscr{P}}
\newcommand{\sC}{\mathscr{C}}
\newcommand{\FPdim}{\mathrm{FPdim}}
\newcommand{\Irr}{\mathrm{Irr}}
\newcommand{\Mu}{\mathbf{m}}
\begin{document}
\title[Complete Positivity and Categorification Criteria]{Complete Positivity of Comultiplication and Primary Criteria for Unitary Categorification}

\author{Linzhe Huang}
\address{L. Huang, Yau Mathematical Sciences Center, Tsinghua University, Beijing, 100084, China}
\email{huanglinzhe@mail.tsinghua.edu.cn}

\author{Zhengwei Liu}
\address{Z. Liu, Yau Mathematical Sciences Center and Department of Mathematics, Tsinghua University, Beijing, 100084, China}
\address{Yanqi Lake Beijing Institute of Mathematical Sciences and Applications, Huairou District, Beijing, 101408, China}
\email{liuzhengwei@mail.tsinghua.edu.cn}

\author{Sebastien Palcoux}
\address{S. Palcoux, Yanqi Lake  Beijing Institute of Mathematical Sciences and Applications, Beijing, 101408, China}
\email{sebastien.palcoux@gmail.com}

\author{Jinsong Wu}
\address{J. Wu, Yanqi Lake  Beijing Institute of Mathematical Sciences and Applications, Beijing, 101408, China}
\email{wjs@bimsa.cn}

\maketitle

\begin{abstract}
In this paper, we investigate quantum Fourier analysis on subfactors and unitary fusion categories. We prove the complete positivity of the comultiplication for subfactors and derive a primary $n$-criterion of unitary categorifcation of multifusion rings. It is stronger than the Schur product criterion when $n\geq3$. The primary criterion could be transformed into various criteria which are easier to check in practice even for noncommutative, high-rank, high-multiplicity, multifusion rings. 
More importantly, the primary criterion could be localized on a sparse set, so that it works for multifusion rings with sparse known data. 
We give numerous examples to illustrate the efficiency and the power of these criteria.

\end{abstract}

{\bf Key words.} Quantum Fourier analysis, Complete positivity of comultiplication, subfactors,  unitary categorification, fusion rings.

{\bf MSC.} 46L37, 18M20, 94A15

\tableofcontents

\section{Introduction}
Jones discovered a quantum knot invariant, the Jones polynomial \cite{Jon85}, inspired by the classification of Jones index of subfactors \cite{Jon83}. Witten extended the Jones polynomial as a three manifold invariant through certain partition function in
  topological quantum field theory in \cite{Wit88}.
Atiyah \cite{Ati88} introduced a mathematical axiomatization for topological quantum field theories, which computes the topological invariants for quantum field theories. 
One can construct 2+1 topological quantum field theory from modular tensor categories \cite{ResTur91} and from spherical categories \cite{TurVir92}. 
In addition, the partition function has reflection positivity, if the tensor categories are unitary which naturally come from subfactors \cite{Ocn94}. 

We study the analytic aspects of reflection positivity and the Fourier duality of subfactors in quantum Fourier analysis \cite{JJLRW20,JLW16,BJL17,JLW18,JLW19,LW18,HLW21}, which has led to various applications in subfactor theory
\cite{LPW21,Liu16,LMP15,BJL17,BDLR19}.
The positivity of convolution for subfactors, called the quantum Schur product theorem (Theorem 4.1 in \cite{Liu16}), turned out to be a surprisingly efficient analytic obstruction of unitary categorification of fusion rings, called the Schur product criterion (Proposition 8.3 and Corollary 8.5 in  \cite{LPW21}). This adds an extra dimension to algebraic obstructions for categorification that have been found in  \cite{EGNO15, Ost033, Ost15, LPR22}.

 In this paper, we show that the complete positivity of comultiplication is more fundamental than the positivity of convolution, which encodes stronger analytic obstructions of unitary categorification. In a companion paper \cite{HLW21}, we prove various convolution inequalities based on the positivity of comultiplication, which provide analytic obstructions as well.
Suppose $\mathcal{R}$ is a fusion ring \cite{Lus87} with fusion coefficients $N_{i,j}^k$,  $1\leq i,j,k\leq m$, and fusion matrices $M_i=N_{i,\cdot}^{\cdot}$.
We prove in Theorem \ref{thm:Higher Positivity Criterion} that if $\mathcal{R}$ has a unitary categorification, then
\begin{align}\label{eq:n quon positivity}
\sum_{i=1}^m \|M_i\|^2\bigg(\frac{M_i}{\|M_i\|}\bigg)^{\otimes n}\geq 0.
\end{align}
We call this inequality the \textbf{primary $n$-criterion}.
The primary $n$-criterion is stronger than the Schur product criterion when $n\geq3$, more friendly to noncommutative fusion rings, and easier to check by computer in practice. 

More importantly, for any subset $S\subseteq \{1,2,\ldots,m\}$, denote $M_i^S$ to be the submatrix of $M_i$,  we can derive the following localized categorification criteria in Theorem \ref{thm:localization criterion} from Inequality \eqref{eq:n quon positivity},
\begin{align}\label{eq:n positivity locolization criterion intro}
\sum_{i=1}^m \|M_i\|^2\bigg(\frac{M_i^S}{\|M_i\|}\bigg)^{\otimes n}\geq 0.
\end{align}
By Frobenius reciprocity, $N_{i,j}^k=N_{j,k^*}^{i^*}$, which only depends on the local fusion rules on $S$.
 We call Inequality \eqref{eq:n positivity locolization criterion intro} the \textbf{$S$-localized $n$-criterion}. 
The size of the checking matrices in the primary 
criteria is large when the rank of the fusion ring is large. 
It would take a long time for computers to check the positivity. Fortunately,  the submatrices inherit the positivity of matrices.
Based on this fact, the localized criteria make it more efficient to check the positivity when the size of the local set $S$ is fixed. Moreover, the localized criteria are still valid  when only sparse data of
the fusion rules are known.

Such local obstructions are rare.  The triple/quadruple point obstructions  \cite{Haa94,Jon12, MPPS12, Sny13, IJMS12, Pen15} are local obstructions to eliminate certain bipartite graphs with a triple/quadruple point from principal graph of subfactors, which are crucial in the small-index classification of subfactors \cite{JMS14}. 
Technically, a subfactor could produce a ``fusion ring" with 2-color. Inspired by this, we generalize our results to multifusion rings in \S \ref{sec:multifusion ring and principal graph}, including a general version of the localized categorification criterion  in Theorem \ref{thm:local criteria multifusion ring}.
Applying this criterion, we eliminate a large family of bipartite graphs of the following form from principal graphs of subfactor  with certain dimension bounds (See Theorem \ref{thm:infinite principle graph exclude} and Remark \ref{rem:infinite principle graph exclude}).
\begin{align}\label{fig:principle graph 1}
\raisebox{-.5cm}{
\begin{tikzpicture}
\draw[line width=1.5](0,0) --(2,0);
\draw(0,-0.3)node{$x_1$};
\draw(2,-0.3)node{$x_2$};
\draw(4,-0.3)node{$x_3$};
\draw (2,0)[line width=1.5] arc (120:60:2);
\draw (2,0)[line width=1.5] arc (-120:-60:2);
\draw(3,0.1)node{\textbf{$\vdots$}};
\draw(3,-0.8)node{$\ell$ arcs};
\draw(5,0)node{\textbf{$\cdots$}};
\draw [fill=black] (0, 0) circle (0.07);
\draw [fill=white] (2, 0) circle (0.07);
\draw [fill=black] (4, 0) circle (0.07);
\end{tikzpicture}}.
\end{align}
It is the first local obstruction for principal graphs with arbitrary high multiplicities.

When $n$ grows, the computational complexity of checking positivity in Inequality \eqref{eq:n quon positivity} grows exponentially $O(m^{3n})$. We applied the Hadamard product to reduce the computation complexity to $O(m^3+m^2 \log n)$. 
For any subset $S\subseteq \{1,2,\ldots,m\}$ and unitary matrices $U_j\in M_{|S|}(\mathbb{C})$, $j=1,\ldots,n$,
 we can derive \textbf{reduced twisted $n$-criterion} in Theorem \ref{thm:hadamard criterion} from Inequality \eqref{eq:n positivity locolization criterion intro},
\begin{align}
\sum_{i=1}^m \|M_i\|^2\bigg(\frac{U_j M_i^S U_j^*}{\|M_i\|}\bigg)^{\ast_{j=1}^n}\geq 0,
\end{align}
where $\ast$ is the Hadamard product and $*_{j=1}^n$ means the Hadamard product of the $n$ matrices.
This twisted version is better in practice whenever there are enough zero entries of fusion matrices. We could choose proper unitary matrices to make it  computable even some entries of fusion matrices are unknown (See Theorem \ref{thm:two special simple object fusion ring} for application). 
Applying these criteria to fusion rings, we eliminate numerous fusing rings with sparse data from unitary categorification. We present the corresponding results as follows.

Fusion rings always pass primary $1,2$-criteria.
 By checking the dataset of \cite{VS22} with computer assistance, among the 28451 fusion rings, 19738 ones do not pass primary $3$-criterion (about 68.37\%). In particular, the primary $3$-criterion is more efficient for simple fusion rings.
Among 13893 simple fusion rings in \cite{VS22}, exactly 11729 ones can be excluded from unitary categorification by
 primary $3$-criterion (about $84.4\%$). One of the three simple noncommutative fusion rings  in the dataset of \cite{VS22}  can be excluded by primary $3$-criterion. We expect primary criteria to be powerful for fusion rings of high ranks.

Localized criteria allow us to eliminate fusion rings and bipartite graphs in a single time (See Subsection \ref{sub:S8} and Theorem \ref{thm:infinite principle graph exclude}). $\mathcal{K}_7$ is excluded by only  two points local set in \S \ref{sub:S7}. Other 9 fusion rings  are also excluded with a small perturbation of the Frobenius-Perron dimensions in  \S \ref{sec:fpdim}. 
 By applying the reduced criteria to a simple integral fusion ring of rank 8, we see that localized $n$-criteria are locally inequivalent for different $n$ (See \S \ref{sec:locineq}).  A family of infinitely many  simple fusion rings with unbounded dimension are excluded by
 reduced twisted criterion in Theorem \ref{thm:R4k}.

\begin{ac}
Linzhe Huang was supported by YMSC, Tsinghua University.
Zhengwei Liu was supported by NKPs (Grant no. 2020YFA0713000), by Tsinghua University (Grant no. 04200100122) and by Templeton Religion Trust (TRT 159).
Sebastien Palcoux was supported by grants from Yanqi Lake Beijing Institute of Mathematical Sciences and Applications.
Jinsong Wu was supported by NSFC
(Grant no. 12031004) and a grant from Yanqi Lake Beijing Institute of Mathematical Sciences and Applications.
\end{ac}

Data availability: The datasets generated during and/or analysed during the current study are available from the corresponding author on reasonable request.

\section{Preliminaries}\label{sec:Preliminaries}
In this section, we will recall the notion of fusion ring, fusion bi-algebra, subfactor and the connections between them, see \cite{LPW21} for further details.

Let $\mathbb{N}$ be the set of natural numbers, i.e. $\mathbb{N}=\{0, 1, 2, \ldots\}$. Let $\mathbb{R}_{\geq0}$ be the set of non-negative real numbers. Let $\mathbb{C}$ be the complex field.

A based ring $\mathcal{R}$ is a free $\mathbb{Z}$-module with basis $\{x_j\}_{j\in I}$ such that 
\begin{enumerate}
\item $\displaystyle x_ix_j=\sum_{k\in I} N_{ij}^k x_k$, where $N_{ij}^k\in \mathbb{N}$ and the sum here is a finite sum.
\item The identity $1$ is an $\mathbb{N}$-linear combination of the basis elements.
\item There exists an involution $*$ on $I$ such that the induced map 
\begin{align*}
\sum_{j\in I} n_j x_j \mapsto \sum_{j\in I} n_j x_{j^*}
\end{align*}
is an anti-involution of the ring $\mathcal{R}$.
\item The linear functional $\tau:\mathcal{R}\to \mathbb{Z}$ given by $\tau(x_j)=1$ if $x_j$ occurs in the decomposition of $1$ and $0$ otherwise satisfies that
\begin{align*}
\tau(x_i x_j)=\delta_{i, j^*}, \quad i, j\in I.
\end{align*}
\end{enumerate}
A unital based ring is a based ring such that the identity $1$ is a basis element.
A multifusion ring is a based ring of finite rank.
A fusion ring is a unital based ring of finite rank.

Furthermore, $\mathcal{R}$ is called a fusion algebra, if the condition $N_{ij}^k\in \mathbb{N}$ is released as $N_{ij}^k\geq 0$ 
\cite[Definition 2.1]{LPW21}.
We obtain a finite dimensional unital $C^*$-algebra $\mathcal{B}:=\mathcal{R}\otimes_\mathbb{Z}\mathbb{C}$ equipped with a faithful trace $\tau$.
In this paper, we identify $\mathcal{R}$ as $\mathcal{B}$ if there is no confusion.

We obtain another abelian $C^*$-algebra $\mathcal{A}$ with the basis $\{x_j\}_{j\in I}$ of a fusion ring $\mathcal{R}$, a multiplication $\diamond$ and an involution
$\#$, 
\begin{align}
   \begin{split}
x_i\diamond x_j&=\delta_{ij}d(x_i)^{-1}x_i,\\
x_i^{\#}&=x_i,
   \end{split}
\end{align}
where $d(x_j)$ is the operator norm of $x_j$ in $\mathcal{B}$, also called the Perron-Frobenius dimension of $x_j$. 
Then $d$ extends to a faithful trace on $\mathcal{A}$. The Fourier transform $\mathfrak{F}$: $\mathcal{A}\to\mathcal{B}$ is defined as
\begin{align*}
\mathfrak{F}(x_i)=x_i,\quad \forall i\in I.
\end{align*}
Then $\mathfrak{F}$ is a unitary from $L^2(\mathcal{A},d)$ to $L^2(\mathcal{B},\tau)$. The quintuple $(\mathcal{A},\mathcal{B},d,\tau,\mathfrak{F})$ is called the fusion bialgebra arising from $\mathcal{R}$. 
Given a fusion bialgebra $(\mathcal{A},\mathcal{B},d,\tau,\mathfrak{F})$, 
$(\mathcal{A},\mathcal{B},\lambda_1 d,\lambda_2\tau,\lambda_1^\frac{1}{2}\lambda_2^{-\frac{1}{2}}\mathfrak{F})$ is also a fusion bialgebra, for any gauge parameters $\lambda_1,\lambda_2>0$. 
The multiplication $\diamond$ on $\mathcal{A}$ induces a convolution on $\mathcal{B}$:
\begin{align}\label{def:convolution}
x_i\ast x_j:=\mathfrak{F}(x_i\diamond x_j)=\delta_{ij}d(x_i)^{-1}x_i.
\end{align}
The study of the positivity of the convolution led to the Schur product criterion \cite[Proposition 8.3]{LPW21} for unitary categorification of fusion rings. 

Jones introduced planar algebras \cite{Jon99} to study the standard invariants of subfactors. Suppose $\mathscr{P}$ is a subfactor planar algebra with Jones index $\delta^2$, and $\mathscr{P}_{2,+}$ is abelian, then the quintuple $(\mathscr{P}_{2,+},\mathscr{P}_{2,-},tr_{2,+},tr_{2,-},\mathfrak{F}_s)$ is a fusion bialgebra. Here $tr_{2,\pm}$ are the Markov traces on $\mathscr{P}_{2,\pm}$:
\begin{align}\label{eq:trace}
tr_{2,\pm}(x)=\raisebox{-0.9cm}{
\begin{tikzpicture}[scale=1.5]
\path [fill=white]   (0.2, 0.5) .. controls +(0, 0.375) and +(0, 0.375) .. (0.8, 0.5)--(0.8, 0).. controls +(0, -0.375) and +(0, -0.375) ..(0.2, 0);
\path [fill=white] (0.35, 0.5) .. controls +(0, 0.2) and +(0, 0.2) .. (0.65, 0.5)--(0.65, 0).. controls +(0, -0.2) and +(0, -0.2) ..(0.35, 0);
\draw [blue, fill=white] (0,0) rectangle (0.5, 0.5);
\node at (0.25, 0.25) {$x$};
\draw (0.35, 0.5) .. controls +(0, 0.2) and +(0, 0.2) .. (0.65, 0.5)--(0.65, 0).. controls +(0, -0.2) and +(0, -0.2) ..(0.35, 0);
\draw (0.15, 0.5) .. controls +(0, 0.45) and +(0, 0.45) .. (0.85, 0.5)--(0.85, 0).. controls +(0, -0.45) and +(0, -0.45) ..(0.15, 0);
\end{tikzpicture}}
\end{align}
Moreover, the multiplication and the convolution on  $\mathscr{P}_{2,\pm}$ are defined as
\begin{align*}
x y=\raisebox{-1.25cm}{
\begin{tikzpicture}[scale=1.5]
\draw (0.35, -.2) --++(0,1.6);
\draw (0.15, -.2) --++(0,1.6);
\path [fill=white] (0.35, 0.5) .. controls +(0, 0.3) and +(0, 0.3) .. (0.85, 0.5)-- (0.85, 0) .. controls +(0, -0.3) and +(0, -0.3) .. (0.35, 0)--(0.35, 0.5);
\draw [blue, fill=white] (0,0) rectangle (0.5, 0.5);
\node at (0.25, 0.25) {$x$};
\begin{scope}[shift={(0,0.7)}]
\draw [blue, fill=white] (0,0) rectangle (0.5, 0.5);
\node at (0.25, 0.25) {$y$};
\end{scope}
\end{tikzpicture}}
\;,\quad
x\ast y=\raisebox{-0.9cm}{
\begin{tikzpicture}[scale=1.5]
\path [fill=white] (0.35, 0.5) .. controls +(0, 0.3) and +(0, 0.3) .. (0.85, 0.5)-- (0.85, 0) .. controls +(0, -0.3) and +(0, -0.3) .. (0.35, 0)--(0.35, 0.5);
\draw [blue, fill=white] (0,0) rectangle (0.5, 0.5);
\node at (0.25, 0.25) {$x$};
\draw (0.15, 0.5)--(0.15, 0.9) (0.15, 0)--(0.15, -0.4);
\begin{scope}[shift={(0.7, 0)}]
\draw [blue, fill=white] (0,0) rectangle (0.5, 0.5);
\node at (0.25, 0.25) {$y$};
\draw (0.35, 0.5)--(0.35, 0.9)  (0.35, 0)--(0.35, -0.4);
\end{scope}
\draw (0.35, 0.5) .. controls +(0, 0.3) and +(0, 0.3) .. (0.85, 0.5);
\draw (0.35, 0) .. controls +(0, -0.3) and +(0, -0.3) .. (0.85, 0);
\end{tikzpicture}} \;.
\end{align*}
The string Fourier transform $\mathfrak{F}_s$: $\mathscr{P}_{2,\pm}\to \mathscr{P}_{2,\mp}$ is
\begin{align}
\mathfrak{F}_s(x)=\raisebox{-0.9cm}{
\begin{tikzpicture}[scale=1.5]
\path  (-0.3, -0.4) rectangle (0.8, 0.9);
\path [fill=white] (0.35, -0.4)--(0.35, 0.5) .. controls +(0, 0.3) and +(0, 0.3) .. (0.65, 0.5)--(0.65, -0.4);
\path[fill=white] (0.15, 0.9) -- (0.15, 0) .. controls +(0, -0.3) and +(0, -0.3) .. (-0.15, 0)--(-0.15, 0.9);
\draw [blue, fill=white] (0,0) rectangle (0.5, 0.5);
\node at (0.25, 0.25) {$x$};
\draw (0.35, 0)--(0.35, -0.4) (0.15, 0.5)--(0.15, 0.9);
\draw (0.35, 0.5) .. controls +(0, 0.3) and +(0, 0.3) .. (0.65, 0.5)--(0.65, -0.4);
\draw (0.15, 0) .. controls +(0, -0.3) and +(0, -0.3) .. (-0.15, 0)--(-0.15, 0.9);
\end{tikzpicture}}
\end{align}
We refer the readers to \cite{Jon99,BJ00,BJ03,LPR22, Liu16,JLW16,JJLRW20} and references therein for further study on $\mathscr{P}_{2,\pm}$ of subfactor planar algebras.

A fusion bialgebra $(\mathcal{A},\mathcal{B},d,\tau,\mathfrak{F})$ is called subfactorizable if it comes from the quintuple $(\mathscr{P}_{2,+},\mathscr{P}_{2,-},tr_{2,+},tr_{2,-},\mathfrak{F}_s)$ of a subfactor
planar algebra up to gauge parameters $(\lambda_1,\lambda_2)$. 

If a fusion ring $\mathcal{R}$ admits a unitary categorification $\mathscr{C}$, then the quantum double construction of $\mathscr{C}$ produces a subfactor planar algebra which subfactorizes $\mathcal{R}$. 
For readers' convenience, we briefly recall this process in \S \ref{Sec: Quantum Double} in the Appendix.

The convolution is positive on $\sP_{2,\pm}$ for a subfactor planar algebra, called the quantum Schur product theorem \cite[Theorem 4.1]{Liu16}.
However, the convolution may not be positive on $\mathcal{B}$ of a fusion bialgebra. Due to the subfactorization process, the failure of the quantum Schur product theorem turned out to be a surprisingly efficient obstruction of unitary categorification of fusion rings, called the Schur product criterion \cite[Proposition 8.3]{LPW21}.
This criterion could be reformulated for commutative fusion rings as follows \cite[Corollary 8.5]{LPW21}.
\begin{corollary}
Suppose $\mathcal{R}$ is a commutative fusion ring with a basis $\{x_1=1,x_2,\ldots,x_m\}$  and $M_i$ is the fusion matrix of $x_i$. Let $(\lambda_{i,j})_{1\leq i,j\leq m}$ be the character table such that $\lambda_{i,1}=\|M_i\|$. 
If $\mathcal{R}$ admits a unitary categorification, then for any $m$-tuple $\vec{j}=(j_1,\ldots,j_m)$ and $n\geq1$,
\begin{align}\label{eq:n positivity criterion commutative}
\nu_{\vec{j}}:=\sum_{i=1}^m\frac{\prod_{k=1}^n \lambda_{i,j_k}}{\lambda_{i,1}^{n-2}}\geq0.
\end{align}
\end{corollary}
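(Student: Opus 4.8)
\emph{Strategy.} The plan is to apply the Schur product criterion \cite[Proposition 8.3]{LPW21} to an $n$-fold convolution rather than a single one. If $\mathcal{R}$ admits a unitary categorification, then its fusion bialgebra is subfactorizable (via the quantum double construction recalled in the appendix), so the convolution $\ast$ on $\mathcal{B}$ comes from a subfactor planar algebra; by the quantum Schur product theorem \cite[Theorem 4.1]{Liu16}, $\ast$ sends positive elements to positive elements. As $\ast$ is commutative and associative, iterating this gives $a_1\ast\cdots\ast a_n\ge 0$ whenever $a_1,\dots,a_n\ge 0$ in $\mathcal{B}$. Positivity is unaffected by the gauge parameters $(\lambda_1,\lambda_2)$, so normalizations can be ignored. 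I then feed minimal idempotents of $\mathcal{B}$ into this inequality and extract \eqref{eq:n positivity criterion commutative} by evaluating at the Frobenius--Perron character.

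\emph{Set-up.} Since $\mathcal{R}$ is commutative, $\mathcal{B}=\mathcal{R}\otimes_{\mathbb Z}\mathbb C\cong\mathbb C^m$ with $\ast$-characters $\chi_1,\dots,\chi_m$ satisfying $\chi_j(x_i)=\lambda_{i,j}$, and the normalization $\lambda_{i,1}=\|M_i\|=d(x_i)$ makes $\chi_1$ the Frobenius--Perron character. Let $\mu_j>0$ be the Plancherel weights, defined by $\tau=\sum_j\mu_j\chi_j$ (faithfulness of $\tau$). Character orthogonality, which follows from the based-ring relation $\tau(x_ix_j)=\delta_{i,j^*}$, gives the minimal idempotent of $\mathcal{B}$ at $\chi_j$ as
\[
\hat b_j=\mu_j\sum_{i=1}^m\overline{\lambda_{i,j}}\,x_i ,
\]
and all character values of $\hat b_j$ are $\chi_k(\hat b_j)=\delta_{jk}\ge 0$, so $\hat b_j\ge 0$ in $\mathcal{B}$.

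\emph{Computation.} From $x_i\ast x_{i'}=\delta_{i,i'}\,d(x_i)^{-1}x_i$ one gets $x_i\ast\cdots\ast x_i=d(x_i)^{-(n-1)}x_i$ ($n$ factors) and $x_i\ast x_{i'}=0$ for $i\ne i'$, hence for any indices $j_1,\dots,j_n$,
\[
\hat b_{j_1}\ast\cdots\ast\hat b_{j_n}=\Big(\prod_{k=1}^n\mu_{j_k}\Big)\sum_{i=1}^m\frac{\prod_{k=1}^n\overline{\lambda_{i,j_k}}}{d(x_i)^{\,n-1}}\,x_i\ \ge\ 0 .
\]
Applying the $\ast$-homomorphism $\chi_1$, for which $\chi_1(x_i)=d(x_i)=\lambda_{i,1}$, yields
\[
0\ \le\ \chi_1\big(\hat b_{j_1}\ast\cdots\ast\hat b_{j_n}\big)=\Big(\prod_{k=1}^n\mu_{j_k}\Big)\sum_{i=1}^m\frac{\prod_{k=1}^n\overline{\lambda_{i,j_k}}}{\lambda_{i,1}^{\,n-2}} .
\]
Since $\prod_k\mu_{j_k}>0$ and $\lambda_{i,1}$ is real, the right-hand sum is a nonnegative real; it equals $\nu_{(j_1^*,\dots,j_n^*)}$, where $j\mapsto j^*$ is the involution on $\{1,\dots,m\}$ with $\overline{\lambda_{i,j}}=\lambda_{i,j^*}$. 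As $\vec j$ ranges over all tuples, this is exactly $\nu_{\vec j}\ge 0$.

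\emph{Main obstacle.} Given the quantum Schur product theorem there is no serious difficulty; the only point deserving care is the step from a single convolution to the $n$-fold one, which relies on the intermediate products $\hat b_{j_1}\ast\cdots\ast\hat b_{j_\ell}$ remaining positive (induction on the two-fold case) together with associativity of $\ast$. It is also worth noting that the reality of $\nu_{\vec j}$ needs no separate argument: up to the positive scalar $\prod_k\mu_{j_k}$ it is the value of a positive element of $\mathcal{B}$ under a $\ast$-homomorphism to $\mathbb C$.
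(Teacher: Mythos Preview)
Your proof is correct and follows the same approach the paper indicates (citing \cite[Corollary 8.5]{LPW21} and remarking that the inequality at level $n$ is equivalent to positivity of the convolution of $n-1$ positive operators): iterate the quantum Schur product theorem on the minimal idempotents of $\mathcal{B}$ and read off the inequality by evaluating at a character. The only cosmetic difference is that you take an $n$-fold convolution followed by the fixed Frobenius--Perron character $\chi_1$, whereas the paper's formulation uses an $(n{-}1)$-fold convolution with the last index entering via an arbitrary character; both reductions rest on the same positivity of $\ast$ and its associativity.
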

For each $n$, Equation \eqref{eq:n positivity criterion commutative} is equivalent to that the convolution of $n-1$ positive operators is positive. Therefore, the case $n=3$ implies the case $n\geq 3$.
For noncommutative fusion rings, the corresponding statement in terms of irreducible representations of $\mathcal{R}$ is given in \cite[Proposition 8.3]{LPW21}, which is inconvenient to check (by computer) in practice. 

It is worth mentioning that  $\{\nu_{\vec{j}}\}$ are the eigenvalues of the primary matrix for commutative fusion rings. Etingof, Nikshych and Ostrik considered these eigenvalues as invariants of fusion rings and discussed their integrality property in \cite{ENO21}.
It will be interesting to study the integrality property of these eigenvalues of the primary matrix for fusion rings in general.
In particular, if Kaplansky's 6th conjecture holds for spherical categories, then $\{\delta^{2(n-2)} \nu_{\vec{j}}\}$ are algebraic integers for Grothendiec rings of spherical categories.

\section{Primary Criteria}
In this section, we prove a family of criteria for unitary categorification of fusion rings in Theorem \ref{thm:Higher Positivity Criterion}, which are stronger than the Schur product criterion. We also provide concrete examples and computation results to illustrate the efficiency of the criteria.
\subsection{Complete positivity and Fourier Multiple}\label{subsec:Fourier Multiplier and Complete Positivity}
Suppose $\mathcal{A}$ and $\mathcal{B}$ are $C^*$-algebras and  $\mathcal{A}$ is finite dimensional. Let $\tau$ be a faithful trace on $\mathcal{A}$ and $\Omega$ be the vacuum vector in the GNS construction. 
 Let $\mathcal{A}'$ be the commutant algebra of $\mathcal{A}$ on $L^2(\mathcal{A})$. Let $J$ be the modular conjugation, $J(a\Omega)= a^*\Omega$. Then conjugation of $J$ is a map from $\mathcal{A}$ to $\mathcal{A}'$ and $J\mathcal{A}J=\mathcal{A}'$ in Tomita-Takesaki theory
\begin{proposition}\label{prop:2 positive}
Let $\{a_j\}_{j=1}^m$ be an orthonormal basis of $\mathcal{A}$ with respect to $\tau$. Then
\begin{align*}
\sum_{j=1}^m Ja_j J\otimes a_j\geq0
\end{align*}
in $\mathcal{A}'\otimes\mathcal{A}$.
\end{proposition}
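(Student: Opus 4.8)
The plan is to recognize the element $\sum_j Ja_jJ \otimes a_j$ as (a scalar multiple of) a rank-one projection onto a canonical vector in $L^2(\mathcal{A})\otimes L^2(\mathcal{A})$, from which positivity is immediate. First I would observe that the expression $\sum_j Ja_jJ\otimes a_j$ is independent of the choice of orthonormal basis $\{a_j\}$: under a unitary change of basis $a_j \mapsto \sum_k u_{jk} a_k$ the two tensor legs transform by $\bar{u}$ and $u$ respectively (since $J$ is antilinear, $Ja_jJ \mapsto \sum_k \bar{u}_{jk} Ja_kJ$), and $\sum_j \bar{u}_{jk} u_{jl} = \delta_{kl}$, so the sum is unchanged. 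Hence it suffices to compute with any convenient basis.

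Next I would identify the operator concretely. Consider the vector $\xi := \sum_{j=1}^m (a_j\Omega)\otimes(a_j\Omega) \in L^2(\mathcal{A})\otimes L^2(\mathcal{A})$; this is (up to normalization) the canonical ``maximally entangled'' vector, and it too is basis-independent by the same computation. I claim that $\sum_j Ja_jJ\otimes a_j$ acts on $L^2(\mathcal{A})\otimes L^2(\mathcal{A})$ as the projection-like operator $|\xi\rangle\langle\xi|$ up to normalization — more precisely, one checks on elementary tensors $b\Omega\otimes c\Omega$ that
\begin{align*}
\Big(\sum_j Ja_jJ\otimes a_j\Big)(b\Omega\otimes c\Omega) = \sum_j (b\,a_j^*\,\Omega)\otimes(a_j c\,\Omega),
\end{align*}
using $Ja_jJ(b\Omega) = b a_j^*\Omega$. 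Expanding $b$ and $c$ in the basis and using orthonormality, the right-hand side collapses to a multiple of $\xi$ with coefficient $\langle \xi, b\Omega\otimes c\Omega\rangle$ (after the appropriate conjugation bookkeeping coming from $J$), exhibiting the operator as a nonnegative multiple of $|\xi\rangle\langle\xi|$, hence positive. An alternative, essentially equivalent route is to verify directly that the operator is self-adjoint and idempotent up to a scalar: $(\sum_j Ja_jJ\otimes a_j)^* = \sum_j Ja_j^*J\otimes a_j^*$, and reindexing via the basis $\{a_j^*\}$ (legal by basis-independence, since $\{a_j^*\}$ is again orthonormal when $\tau$ is a trace) shows self-adjointness; squaring and collapsing sums with orthonormality gives proportionality to itself.

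The main obstacle I anticipate is the careful handling of the antilinearity of $J$ and the placement of adjoints/conjugates, so that the change-of-basis invariance and the $|\xi\rangle\langle\xi|$ identification are stated with the correct complex conjugates; a sloppy version would accidentally produce $\sum_j Ja_jJ\otimes a_j^*$ or a twisted vector that is not manifestly positive. It is also worth pinning down that $Ja_jJ$ genuinely lies in $\mathcal{A}'$ (Tomita–Takesaki, as recalled above) so that the displayed element indeed sits in $\mathcal{A}'\otimes\mathcal{A}$ and the ambient $C^*$-algebra in which we assert positivity is unambiguous. Once those bookkeeping points are settled, positivity follows with no further analysis since $\mathcal{A}$ is finite dimensional and everything reduces to finite sums.
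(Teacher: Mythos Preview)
Your basis-independence and self-adjointness arguments are fine, but the central claim --- that $\sum_j Ja_jJ\otimes a_j$ is a scalar multiple of the rank-one operator $|\xi\rangle\langle\xi|$ --- is false. Take $\mathcal{A}=M_n(\mathbb{C})$ with $\tau=\mathrm{Tr}$ and matrix-unit basis $\{e_{ij}\}$. Then $(Je_{ij}J\otimes e_{ij})(e_{pq}\Omega\otimes e_{rs}\Omega)=\delta_{qj}\delta_{jr}\,e_{pi}\Omega\otimes e_{is}\Omega$, so summing gives $\delta_{qr}\sum_i e_{pi}\Omega\otimes e_{is}\Omega$. The range is spanned by the $n^2$ orthogonal vectors $\eta_{ps}:=\sum_i e_{pi}\Omega\otimes e_{is}\Omega$, and the operator acts as $n$ times the projection onto that span; it has rank $n^2$, not $1$. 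Your ``expanding $b,c$ in the basis and using orthonormality'' step does not collapse the sum to a single vector, because the index $j$ couples the two tensor legs through \emph{different} matrix slots (right multiplication by $a_j^*$ on the first, left multiplication by $a_j$ on the second).

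Your alternative route (square it and get a scalar multiple back) also fails in general: squaring produces $\sum_{j,k}J(a_ja_k)J\otimes a_ja_k$, and reducing this to a scalar times the original requires $\sum_j a_ja_j^*$ to be a scalar. For $\mathcal{A}=\mathbb{C}\oplus\mathbb{C}$ with trace weights $p\neq q$, one gets $\sum_j a_ja_j^*=(p^{-1},q^{-1})$, not a scalar; correspondingly the element is $\mathrm{diag}(p^{-1},0,0,q^{-1})$, which is positive but not proportional to a projection.

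The paper avoids any spectral identification: it observes that the linear functional $T(JxJ\otimes y):=\tau(x^*y)$ on $\mathcal{A}'\otimes\mathcal{A}$ is positive (since $T(z^*z)=\tau(ww^*)\geq 0$ with $w=\sum_i x_iy_i^*$), and that $\sum_j Ja_jJ\otimes a_j$ is exactly its Riesz representative with respect to the trace inner product. Positivity of the functional then forces positivity of the element. If you want to salvage your operator-theoretic picture, the correct statement is that the element is a positive combination of mutually orthogonal projections (one per simple summand of $\mathcal{A}$), not a single rank-one projection.
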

\begin{proof}
Define a linear map $T$: $\mathcal{A}'\otimes\mathcal{A}\to\mathbb{C}$,
\begin{align*}
J(x)J\otimes y\mapsto \tau(x^*y).
\end{align*}
Then $T$ is a positive linear functional.  Indeed,
\begin{align*}
T\left(\sum_{j=1}^n J(x_j)J\otimes y_j\right)\left(\sum_{j=1}^n J(x_j)J\otimes y_j\right)^*&=\sum_{i,j=1}^nT( J(x_i x_j^*)J\otimes x_i x_j^*)
\\
&=\sum_{i,j=1}^n T(x_i^*x_i x_j^*x_j)\\
&\geq0.
\end{align*}
It is clear that
\begin{align*}
\left\langle J(x)J\otimes y, \sum_{j=1}^m Ja_j J\otimes a_j\right\rangle&=\sum_{j=1}^m \left\langle J(x)J,J(a_j)J\right\rangle
\left\langle y, a_j\right\rangle\\
&=\sum_{j=1}^m \left\langle a_j,x\right\rangle
\left\langle y, a_j\right\rangle\\
&=\left\langle y,x\right\rangle.
\end{align*}
By Riesz representation theorem and the positivity of $T$, we have $\sum_{j=1}^m Ja_j J\otimes a_j\geq0$.
\end{proof}

\begin{definition}\label{def:Fourier multiple}
For a linear map $\Phi:\mathcal{A} \to \mathcal{B}$, we define its Fourier multiplier $\hat{\Phi}$ in $\mathcal{A}' \otimes \mathcal{B}$ as
\begin{align*}
\hat{\Phi}=\sum_j J(a_j)J \otimes  \Phi(a_j),
\end{align*}
where $\{a_j\}$ is an orthonormal basis of $\mathcal{A}$ w.r.t. $\tau$.
\end{definition}
Note that $\hat{\Phi}$ is independent of the choice of the basis.

\begin{proposition}
We define
\begin{align*}
a*\hat{\Phi}&:= (\Omega^* \otimes I_\mathcal{B})((a\otimes I_\mathcal{B})  \hat{\Phi} ) (\Omega \otimes I_\mathcal{B}),
\end{align*}
where $I_\mathcal{B}$ is the identity of $\mathcal{B}$. 
Then $a*\hat{\Phi}=\Phi(a)$.
\end{proposition}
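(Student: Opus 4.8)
The plan is to unwind the Fourier multiplier $\hat{\Phi}$ from Definition~\ref{def:Fourier multiple} and the vacuum compression $a*\hat{\Phi}$, and then to identify the result with $\Phi(a)$ using nothing more than the reproducing property of an orthonormal basis in the GNS space. First I would record two elementary facts about the modular conjugation $J$ on $L^2(\mathcal{A})$: since $J(x\Omega)=x^*\Omega$ for $x\in\mathcal{A}$, the case $x=1$ gives $J\Omega=\Omega$, and hence, applying the antilinearity of $J$ once, $(Ja_jJ)\Omega=Ja_j\Omega=a_j^*\Omega$ for every basis vector $a_j$.

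Next I would compute the compression directly by evaluating on $1_\mathcal{B}\in\mathcal{B}$. Applying $\Omega\otimes I_\mathcal{B}$ produces $\Omega\otimes 1_\mathcal{B}$; applying $\hat{\Phi}=\sum_j J(a_j)J\otimes\Phi(a_j)$ then produces $\sum_j a_j^*\Omega\otimes\Phi(a_j)$; multiplying on the left by $a\otimes I_\mathcal{B}$ gives $\sum_j (aa_j^*)\Omega\otimes\Phi(a_j)$; and finally $\Omega^*\otimes I_\mathcal{B}$, which sends $\xi\otimes b$ to $\langle\Omega,\xi\rangle\,b$, extracts $\sum_j\langle\Omega,(aa_j^*)\Omega\rangle\,\Phi(a_j)$. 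Using the GNS convention $\langle x\Omega,y\Omega\rangle=\tau(x^*y)$, this equals $\sum_j\tau(aa_j^*)\,\Phi(a_j)$, so $a*\hat{\Phi}=\sum_j\tau(aa_j^*)\Phi(a_j)$.

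To finish, I would use that $\tau$ is a trace together with orthonormality of $\{a_j\}$: one has $\tau(aa_j^*)=\tau(a_j^*a)=\langle a_j\Omega,a\Omega\rangle$, so these scalars are precisely the coordinates of $a\Omega$ in the orthonormal basis $\{a_j\Omega\}$ of $L^2(\mathcal{A})$, whence $a\Omega=\sum_j\tau(aa_j^*)\,a_j\Omega$ and thus $a=\sum_j\tau(aa_j^*)\,a_j$ in $\mathcal{A}$ since $\Omega$ is separating. Linearity of $\Phi$ then gives $a*\hat{\Phi}=\Phi\big(\sum_j\tau(aa_j^*)a_j\big)=\Phi(a)$; alternatively, as $a\mapsto a*\hat{\Phi}$ is linear it is enough to take $a=a_k$, where $\tau(a_ka_j^*)=\delta_{jk}$ collapses the sum at once. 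There is no analytic obstacle in any of this; the only point that needs care is keeping the conventions aligned — the antilinearity of $J$, the chosen GNS inner product, and the trace identity used to pass from $\tau(a_j^*a_k)=\delta_{jk}$ to $\tau(a_ka_j^*)=\delta_{jk}$. I would also note that the proposition says exactly that $\Phi\mapsto\hat{\Phi}$ is inverted by slicing against the vacuum on the $\mathcal{A}'$-leg, so the Fourier multiplier loses no information about $\Phi$.
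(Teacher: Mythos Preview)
Your proof is correct and follows essentially the same route as the paper: expand $\hat{\Phi}$, use $J(a_j)J\,\Omega=a_j^*\Omega$ to reduce the vacuum compression to $\sum_j\tau(aa_j^*)\Phi(a_j)$, and then invoke orthonormality of $\{a_j\}$ to identify this with $\Phi(a)$. The paper's version is terser---it skips the justification of $J(a_j)J\,\Omega=a_j^*\Omega$ and the last step---so your extra care with conventions only adds clarity, not new ideas.
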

\begin{proof}
We have
\begin{align*}
a*\hat{\Phi}&= \sum_{j} (\Omega^* \otimes I_\mathcal{B})((a\otimes I_\mathcal{B})  (J(a_j)J \otimes  \Phi(a_j))) (\Omega \otimes I_\mathcal{B}),\\
&= \sum_{j} \Omega^* aJ(a_j)J \Omega \otimes  \Phi(a_j)\\
&= \sum_{j} \Omega^* a a_j^* \Omega \otimes  \Phi(a_j) \\
&= \sum_{j} \tau(a a_j^*) \Phi(a_j) \\
&=\Phi(a) \;.
\end{align*}
\end{proof}

\begin{proposition}\label{Prop: 1}
For a linear map $\Phi:\mathcal{A} \to \mathcal{B}$, we have that $\Phi$ is completely positive if and only if $\hat{\Phi} \geq 0$.
\end{proposition}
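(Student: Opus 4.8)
The plan is to read this as a trace-relative version of the Choi/Stinespring characterization of completely positive maps, built entirely on the two facts just established: the positivity of $\sum_j J(a_j)J\otimes a_j$ in $\mathcal{A}'\otimes\mathcal{A}$ (Proposition~\ref{prop:2 positive}) and the recovery formula $a*\hat\Phi=\Phi(a)$ proved just above. Throughout I represent $\mathcal{B}$ faithfully on a Hilbert space $\mathcal{H}$, so that $\mathcal{A}'\otimes\mathcal{B}\subseteq B(L^2(\mathcal{A})\otimes\mathcal{H})$ and the operators $\Omega\otimes I$, $a\otimes I$ all act on $L^2(\mathcal{A})\otimes\mathcal{H}$.

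For the ``only if'' direction, suppose $\Phi$ is completely positive. Directly from Definition~\ref{def:Fourier multiple},
\[
\hat\Phi=\sum_j J(a_j)J\otimes\Phi(a_j)=(\mathrm{id}_{\mathcal{A}'}\otimes\Phi)\Big(\sum_j J(a_j)J\otimes a_j\Big).
\]
Since $\mathcal{A}$, hence $\mathcal{A}'$, is finite dimensional, write $\mathcal{A}'\cong\bigoplus_s M_{m_s}(\mathbb{C})$; then $\mathrm{id}_{\mathcal{A}'}\otimes\Phi$ is the direct sum of the maps $\mathrm{id}_{M_{m_s}}\otimes\Phi\colon M_{m_s}(\mathcal{A})\to M_{m_s}(\mathcal{B})$, each positive because $\Phi$ is $m_s$-positive. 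A direct sum of positive maps is positive, so applying $\mathrm{id}_{\mathcal{A}'}\otimes\Phi$ to the positive element of Proposition~\ref{prop:2 positive} yields $\hat\Phi\ge0$.

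For the ``if'' direction, suppose $\hat\Phi\ge0$ and extract a Stinespring dilation. Put $c:=\hat\Phi^{1/2}\in\mathcal{A}'\otimes\mathcal{B}$, so $\hat\Phi=c^*c$, and set $W:=c\,(\Omega\otimes I_{\mathcal{H}})$, a bounded operator $\mathcal{H}\to L^2(\mathcal{A})\otimes\mathcal{H}$. The one point to verify is that $a\otimes I_{\mathcal{H}}$ commutes with every element of $\mathcal{A}'\otimes\mathcal{B}$: the first legs lie in $\mathcal{A}$ and $\mathcal{A}'$, which commute on $L^2(\mathcal{A})$, while $I_{\mathcal{H}}$ is central. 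Combining this with $a*\hat\Phi=\Phi(a)$ and $(\Omega^*\otimes I_{\mathcal{H}})=(\Omega\otimes I_{\mathcal{H}})^*$ gives
\[
\Phi(a)=(\Omega^*\otimes I)(a\otimes I)c^*c(\Omega\otimes I)=(\Omega^*\otimes I)c^*(a\otimes I)c(\Omega\otimes I)=W^*(a\otimes I_{\mathcal{H}})W
\]
for every $a\in\mathcal{A}$. Since $a\mapsto a\otimes I_{\mathcal{H}}$ is a $*$-representation of $\mathcal{A}$, the composite $a\mapsto W^*(a\otimes I_{\mathcal{H}})W$ is completely positive, and its range lies in $\mathcal{B}$, so $\Phi$ is completely positive.

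I do not expect a genuine obstacle: the statement is essentially a repackaging of the two propositions above into the familiar ``Choi element $\ge0\iff$ CP'' form. The only care needed is bookkeeping — the commutation $(a\otimes I_{\mathcal{H}})(\mathcal{A}'\otimes\mathcal{B})=(\mathcal{A}'\otimes\mathcal{B})(a\otimes I_{\mathcal{H}})$, noting that although $x\mapsto JxJ$ is conjugate linear nothing is applied to that leg (so the identity $\hat\Phi=(\mathrm{id}_{\mathcal{A}'}\otimes\Phi)(\sum_j J(a_j)J\otimes a_j)$ is legitimate), and that complete positivity of $\Phi$ is precisely what supplies positivity of $\mathrm{id}_{\mathcal{A}'}\otimes\Phi$ for the finite-dimensional $\mathcal{A}'$.
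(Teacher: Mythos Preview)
Your proof is correct and follows essentially the same approach as the paper: both directions hinge on writing $\hat\Phi=(\mathrm{id}_{\mathcal{A}'}\otimes\Phi)(\sum_j J(a_j)J\otimes a_j)$ together with Proposition~\ref{prop:2 positive} for one implication, and on commuting $\hat\Phi^{1/2}\in\mathcal{A}'\otimes\mathcal{B}$ past $a\otimes I$ in the recovery formula to obtain a Stinespring-type expression for the other. You supply more detail (the direct-sum decomposition of $\mathcal{A}'$, the explicit $W$), but the argument is the same.
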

\begin{proof}
If $\hat{\Phi} \geq 0$, then $\hat{\Phi}^{1/2}$ is in $\mathcal{A} ' \otimes \mathcal{B} $. We have that
\begin{align*}
\Phi(a)&=(\Omega^* \otimes I_\mathcal{B})((a\otimes I_\mathcal{B})  \hat{\Phi} ) (\Omega \otimes I_\mathcal{B}) \\
&=(\Omega^* \otimes I_\mathcal{B})( \hat{\Phi}^{1/2} (a\otimes I_\mathcal{B})  \hat{\Phi}^{1/2} ) (\Omega \otimes I_\mathcal{B}).
\end{align*}
So $\Phi$ is completely positive. On the other hand,
\begin{align*}
\hat{\Phi}=I\otimes \Phi\left(\sum_{j=1}^m Ja_j J\otimes a_j\right).
\end{align*}
By Proposition \ref{prop:2 positive}, $\hat{\Phi}\geq0$.
\end{proof}
\subsection{Comultiplication and Primary Matrix}
Suppose $\mathcal{R}$ is a fusion ring with basis $\{x_1=1, x_2,\ldots,x_m\}$ and $M_i$ is the fusion matrix of $x_i$. Let $(\mathcal{A},\mathcal{B},d,\tau,\mathfrak{F})$ be the fusion bialgebra arising from $\mathcal{R}$.
\begin{definition}
 Let $(\mathcal{A},\mathcal{B},d,\tau,\mathfrak{F})$ be a fusion bialgebra. We define the \textbf{comultiplication} 
$\Delta$:  $\mathcal{B}\rightarrow\mathcal{B}\otimes\mathcal{B}$ as a linear map such that
\begin{align*}
\Delta(x_j)=\frac{1}{d_j} x_j\otimes x_j,
\end{align*}
where $d_j$ is the quantum dimension of $x_j$.
Moreover, we define the \textbf{higher comultiplication} $\Delta^{(n)}$: $\mathcal{B}\rightarrow\mathcal{B}^{\otimes (n+1)}$, $n\geq2$, as a linear map such that
\begin{align*}
\Delta^{(n)}(x_j)=\frac{1}{d_j^{n}}x_j^{\otimes (n+1)}.
\end{align*}
\end{definition}

 The convolution on $\mathcal{B}$ in \eqref{def:convolution} induces a linear map $\mathcal{B}\otimes\mathcal{B}\to\mathcal{B}$:
$\sum a_i\otimes b_i\mapsto\sum a_i\ast b_i$. 
It is clear that 
\begin{align}
\left\langle x_i\ast x_j,x_k\right\rangle=\left\langle x_i\otimes x_j,\Delta(x_k)\right\rangle.
\end{align}
So the comultiplication is the dual operator of this linear map.
\begin{proposition}\label{prop:comultiplication positive imply convolution positive fusion bialgebra}
If the comultiplication $\Delta$:  $\mathcal{B}\rightarrow\mathcal{B}\otimes\mathcal{B}$ is positive then
the convolution is also positive, i.e., $x\ast y\geq0$, for $x,y\geq0$, $x,y\in\mathcal{B}$.
\end{proposition}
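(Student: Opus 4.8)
The plan is to exploit the duality between $\Delta$ and convolution recorded just before the statement. Let $C\colon \mathcal{B}\otimes\mathcal{B}\to\mathcal{B}$ be the convolution map $\sum a_i\otimes b_i\mapsto \sum a_i\ast b_i$. The identity $\langle x_i\ast x_j,x_k\rangle=\langle x_i\otimes x_j,\Delta(x_k)\rangle$, extended linearly in each variable, gives
\begin{align*}
\langle x\ast y,z\rangle=\langle x\otimes y,\Delta(z)\rangle,\qquad x,y,z\in\mathcal{B},
\end{align*}
the inner products being those of $L^2(\mathcal{B},\tau)$ and $L^2(\mathcal{B}\otimes\mathcal{B},\tau\otimes\tau)$; that is, $\Delta=C^*$. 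So I would detect positivity of $x\ast y$ by pairing it against arbitrary positive $z\in\mathcal{B}$.

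First, fix positive $x,y\in\mathcal{B}$ and an arbitrary positive $z\in\mathcal{B}$. Then $x\otimes y\geq 0$ in the finite-dimensional $C^*$-algebra $\mathcal{B}\otimes\mathcal{B}$, and the hypothesis on $\Delta$ gives $\Delta(z)\geq 0$ there too. Since $\tau\otimes\tau$ is a faithful positive trace on $\mathcal{B}\otimes\mathcal{B}$,
\begin{align*}
\langle x\otimes y,\Delta(z)\rangle=(\tau\otimes\tau)\bigl(\Delta(z)^{1/2}(x\otimes y)\Delta(z)^{1/2}\bigr)\geq 0,
\end{align*}
the right-hand side being the trace of a positive operator. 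By the displayed duality this says $\tau\bigl((x\ast y)z\bigr)\geq 0$ for every positive $z\in\mathcal{B}$.

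It then remains to pass from ``$\tau((x\ast y)z)\geq 0$ for all $z\geq 0$'' to ``$x\ast y\geq 0$''. Writing $x\ast y=a+ib$ with $a,b$ self-adjoint, the fact that each $\tau((x\ast y)z)$ is a nonnegative real forces $\tau(bz)=0$ for all $z\geq 0$, hence for all $z\in\mathcal{B}$ by linearity, so $b=0$ by faithfulness of $\tau$; then $\tau(az)\geq 0$ for all $z\geq 0$, and choosing $z=a_-$ in the decomposition $a=a_+-a_-$ gives $\tau(a_-^2)=0$, i.e. $a_-=0$. Hence $x\ast y=a_+\geq 0$.

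I do not expect a real obstacle: the whole content is the duality $\Delta=C^*$ (already verified on basis elements in the text) together with the standard fact that in a finite-dimensional $C^*$-algebra with a faithful trace, positivity of a self-adjoint element is detected by pairing with positive elements. The only points deserving a word of care are that $\tau\otimes\tau$ is indeed a faithful positive trace on $\mathcal{B}\otimes\mathcal{B}$ (legitimising the trace-of-a-product step) and that the basis identity extends to all of $\mathcal{B}$, both routine. If preferred, self-adjointness of $x\ast y$ can be isolated beforehand --- it is immediate from $x_i^*=x_{i^*}$, $d_{i^*}=d_i$, and $x_i\ast x_j=\delta_{ij}d_i^{-1}x_i$ --- but the argument above produces it for free.
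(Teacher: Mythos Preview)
Your proof is correct and follows the same approach as the paper: use the duality $\langle x\ast y,z\rangle=\langle x\otimes y,\Delta(z)\rangle$ to show that the pairing of $x\ast y$ against every positive $z$ is nonnegative, then conclude $x\ast y\geq 0$. The paper's proof is the one-line version of yours, simply asserting ``so $x\ast y\geq 0$'' after the duality inequality, whereas you have spelled out the standard passage from nonnegative pairings to positivity via faithfulness of $\tau$.
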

\begin{proof}
For any $x,y,z\geq0$, $x,y,z\in\mathcal{B}$, we have 
\begin{align}
\left\langle x\ast y,z\right\rangle=\left\langle x\otimes y,\Delta(z)\right\rangle\geq0.
\end{align}
So $x\ast y\geq0$.
\end{proof}
\begin{definition}
 Let $(\mathcal{A},\mathcal{B},d,\tau,\mathfrak{F})$ be a fusion bialgebra.
For any $n\geq1$, we define 
\begin{align}\label{eq:positive criterion matrix}
T_n(\mathcal{B})=\sum_{i=1}^m \|M_i\|^2\bigg(\frac{M_i}{\|M_i\|}\bigg)^{\otimes n},
\end{align}
where $\|M_i\|$ is the operator norm of $M_i$ acting on $\mathbb{C}^m$.
We call it the \textbf{primary $n$-matrix} of $\mathcal{B}$. 
For simplicity, we use $T_n$ instead $T_n(\mathcal{B})$ if there is no confusion.
\end{definition}

\begin{proposition}\label{prop:1 positivity}
 Let $(\mathcal{A},\mathcal{B},d,\tau,\mathfrak{F})$ be a fusion bialgebra. Then the primary 1-matrix is positive.
\end{proposition}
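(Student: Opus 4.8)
The plan is to recognize $T_1 = \sum_{i=1}^m \|M_i\|^2 \cdot \frac{M_i}{\|M_i\|} = \sum_{i=1}^m \|M_i\| M_i$ as (up to the identification of $\mathcal{B}$ with the matrix algebra acting on $L^2(\mathcal{B})$ via the fusion matrices) the Fourier multiplier, or a closely related object, of a completely positive map, and then invoke Proposition \ref{Prop: 1}. More directly, I would argue that $T_1$ is a positive element of $M_m(\mathbb{C})$ by exhibiting it as a Gram-type matrix. First I would recall that the left regular representation sends $x_i \mapsto M_i$, and that under the trace $\tau$ the basis $\{x_i\}$ is orthonormal up to the weights coming from $d_i$; concretely, $\|M_i\| = d_i$ since $M_i$ is a nonnegative matrix whose Perron--Frobenius eigenvalue is exactly the quantum dimension $d_i$ (this is where the fusion-ring axioms enter, via the common Perron--Frobenius eigenvector). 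Hence $T_1 = \sum_i d_i M_i$, which is the fusion matrix of the element $\sum_i d_i x_i \in \mathcal{B}$.

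The key step is then to identify $\sum_i d_i x_i$ as a positive element of $\mathcal{B}$ and to observe that the left regular representation of a $C^*$-algebra is a $*$-representation, so it carries positive elements to positive operators. That $\omega := \sum_i d_i x_i$ is positive follows because $\omega = \mathfrak{F}(\text{something positive})$ or, more elementarily, because the functional $x \mapsto$ (coefficient of $x_1$ in $\omega x$) is, up to normalization, the Perron--Frobenius/Haar-type weight, and $\omega$ acts on each irreducible summand of $\mathcal{B}$ as a nonnegative scalar (the Perron--Frobenius eigenvalue of the corresponding block). Equivalently: in each matrix block of the semisimple algebra $\mathcal{B}$, the element $\omega$ is a nonnegative multiple of a rank-one projection onto the Perron--Frobenius eigenvector, hence $\omega \geq 0$, hence its image $T_1 = \sum_i d_i M_i$ under the (faithful, unital, $*$-preserving) left regular representation is a positive matrix.

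An alternative, self-contained route I would keep in reserve: apply Proposition \ref{Prop: 1} with $\Phi = \mathrm{id}_{\mathcal{B}}$ or with $\Phi$ the rescaled Fourier multiplier map, computing $\hat{\Phi} = \sum_j J(a_j) J \otimes a_j$ for the orthonormal basis $a_j = x_j / \sqrt{\tau(x_j x_{j^*})}$ of $\mathcal{B}$, and then contracting one leg against the vacuum to extract the scalar matrix $T_1$; Proposition \ref{prop:2 positive} gives $\hat{\Phi} \geq 0$ directly, and positivity is preserved under the compression $(\Omega^* \otimes I)(\,\cdot\,)(\Omega \otimes I)$ and under identifying the $\mathcal{A}'$-leg with $M_m(\mathbb{C})$ via $J(x_j)J$. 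The main obstacle is bookkeeping: one must be careful that the normalization constants ($d_i$ versus $\|M_i\|$, the Markov trace weights, the gauge parameters $\lambda_1,\lambda_2$) line up so that the coefficient of $M_i^{\otimes 1}$ in $T_1$ is exactly $\|M_i\|^2 / \|M_i\| = \|M_i\| = d_i$; once the identification $\|M_i\| = d_i$ is in place, the positivity itself is immediate from either route.
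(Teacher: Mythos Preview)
Your main route is essentially the paper's proof: both set $e=\sum_i d_i x_i\in\mathcal{B}$, argue $e\geq 0$, and conclude that its left-regular matrix $T_1=\sum_i d_i M_i$ is positive. The only difference is in the justification of $e\geq 0$: the paper simply checks $e=e^*$ and $e^2=\FPdim_{\mathbb{C}}(\mathcal{R})\,e$ (using that $d$ is multiplicative), so $e$ is a nonnegative scalar times a projection; your block/Perron--Frobenius description reaches the same conclusion but is more roundabout, and the phrase ``rank-one projection onto the Perron--Frobenius eigenvector'' conflates the abstract algebra $\mathcal{B}$ with its regular representation on $\mathbb{C}^m$. Your alternative route via Proposition~\ref{Prop: 1} and contraction against $\Omega$ is unnecessary here and, as you note, would require tracking normalizations that the direct $e^2=c\,e$ computation avoids entirely.
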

\begin{proof}
Let $e=\sum_{j=1}^m d_j x_j$. Then $e=e^*$ and $e^2=\text{\rm FPdim}_\mathbb{C}(\mathcal{R}) e$. So $e$
is a multiple of a projection. Thus $e\geq0$. The representation of $e$ on the basis $\{x_j\}$ is
\begin{align*}
\sum_{j=1}^m \|M_j\|M_j.
\end{align*}
So it is positive.
\end{proof}

\begin{proposition}\label{prop:2 positivity}
 Let $(\mathcal{A},\mathcal{B},d,\tau,\mathfrak{F})$ be a fusion bialgebra. Then the primary 2-matrix is positive.
\end{proposition}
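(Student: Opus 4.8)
The plan is to exhibit $T_2$ as a Fourier multiplier (or Gram-type matrix) associated with a completely positive map, and then invoke Propositions~\ref{Prop: 1} and~\ref{prop:2 positive}. First I would recall that for the fusion bialgebra $(\mathcal{A},\mathcal{B},d,\tau,\mathfrak{F})$, the fusion matrix $M_i$ represents the multiplication operator by $x_i$ on $\mathcal{B}$ in the basis $\{x_j\}$, and that $\|M_i\| = d_i$ is the Perron--Frobenius dimension of $x_i$. Thus $T_2 = \sum_{i=1}^m M_i\otimes M_i$, which is exactly (up to the unitary $\mathfrak{F}$ and identification $J\mathcal{A}J \cong \mathcal{A}'$) the Fourier multiplier of a natural candidate map. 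Concretely, I would interpret $\sum_i M_i \otimes M_i$ as $\widehat{\Phi}$ for the map $\Phi = \Delta^{(1)}$-type comultiplication transported through Fourier transform, or more directly observe that $\sum_i x_i \otimes x_i$ (suitably normalized) is the image under $\mathfrak{F}\otimes\mathfrak{F}$ of the positive element $\sum_i (Ja_iJ)\otimes a_i$ from Proposition~\ref{prop:2 positive} applied to the orthonormal basis $a_i = x_i/\|x_i\|_2$ of $\mathcal{A}$.

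The key computational step is to verify the normalization. With respect to the trace $\tau$ on $\mathcal{B}$ (equivalently $d$ on $\mathcal{A}$), one has $\tau(x_i x_j^*) = \delta_{ij}$ by axiom (4) of a based ring and Frobenius reciprocity, so $\{x_i\}$ is already an orthonormal basis of $\mathcal{B}$ with respect to $\tau$ — wait, more carefully, $\tau(x_i x_{i}^*) = N_{i,i^*}^1$ which equals $1$, so indeed $\{x_i\}$ is orthonormal in $L^2(\mathcal{B},\tau)$. On the $\mathcal{A}$ side, $d(x_i\diamond x_i^{\#}) = d(d_i^{-1}x_i) = d_i^{-1}d(x_i)$, and $d(x_i) = d_i$ after the appropriate gauge, so $\{x_i\}$ need rescaling to become orthonormal; the rescaled basis is $a_i = x_i$ once $d$ is normalized so $d(x_i)=d_i$ makes $\|x_i\|_{2,d}^2 = d_i^{-1}d_i=1$. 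Hmm — this requires care, but the upshot is that Proposition~\ref{prop:2 positive} gives $\sum_i Jx_iJ\otimes x_i \geq 0$ in $\mathcal{A}'\otimes\mathcal{A}$, and applying the $*$-isomorphism $\mathfrak{F}\otimes\mathfrak{F}$ composed with the identification $\mathcal{A}'\cong\mathcal{A}$ via $J$ turns this into positivity of a matrix whose $(j,k),(j',k')$ entry encodes $\sum_i N_{i,j}^{j'}N_{i,k}^{k'}$ weighted by dimensions. I would then check directly that this matrix, after the gauge choice $\|M_i\|=d_i$, equals $T_2 = \sum_i d_i^2 (M_i/d_i)^{\otimes 2} = \sum_i M_i\otimes M_i$.

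Alternatively — and this may be cleaner — I would use Proposition~\ref{Prop: 1}: the comultiplication $\Delta:\mathcal{B}\to\mathcal{B}\otimes\mathcal{B}$, $\Delta(x_j)=d_j^{-1}x_j\otimes x_j$, is completely positive iff $\widehat{\Delta}\geq 0$, and $\widehat{\Delta} = \sum_j J(x_j)J\otimes \Delta(x_j)$ after transporting to the $\mathcal{B}$-picture is visibly a positive combination that (again after the gauge normalization identifying $J(x_j)J$-action with left multiplication by $M_j^{\mathrm{op}}$ or $M_{j^*}$) reproduces $T_2$. Either way, the honest content is the same: positivity is inherited from Proposition~\ref{prop:2 positive}.

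The main obstacle I anticipate is bookkeeping the gauge parameters and the dual basis carefully: making sure that the orthonormal basis used in Proposition~\ref{prop:2 positive} matches $\{x_i\}$ up to the scalars $\|M_i\| = d_i$, that $J(x_i)J$ acting on $L^2$ corresponds to right-multiplication (hence to $M_i^t$ or $M_{i^*}$, which by $N_{i,j}^k = N_{i^*,k}^j$ has the same positivity content), and that the vacuum expectation in the formula $a*\widehat{\Phi}=\Phi(a)$ correctly extracts the matrix entries. None of these steps is deep, but getting the normalization exactly right so that the stated matrix $\sum_{i=1}^m\|M_i\|^2(M_i/\|M_i\|)^{\otimes 2}$ literally appears is where the care is needed; a clean way is to first reduce to the standard gauge $\lambda_1=\lambda_2=1$ using the gauge-invariance remark in \S\ref{sec:Preliminaries}, prove positivity there, and note that $T_2$ is gauge-independent by inspection of \eqref{eq:positive criterion matrix}.
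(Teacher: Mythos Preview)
Your core idea is exactly the paper's: invoke Proposition~\ref{prop:2 positive} and read off the matrix $\sum_j M_j\otimes M_j$. The paper's proof is two lines: since $\{x_j\}$ is $\tau$-orthonormal in $\mathcal{B}$ (which you yourself verify), Proposition~\ref{prop:2 positive} applied with the generic ``$\mathcal{A}$'' of \S\ref{subsec:Fourier Multiplier and Complete Positivity} instantiated to $\mathcal{B}$ gives $\sum_j Jx_jJ\otimes x_j\ge 0$ in $\mathcal{B}'\otimes\mathcal{B}$, and left multiplication of $Jx_jJ$ on $\mathcal{B}'$ in the basis $\{Jx_iJ\}$ has matrix $M_j$ (because $Jx_jJ\cdot Jx_iJ=J(x_jx_i)J$), so the representation is $T_2=\sum_j M_j\otimes M_j$.

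Your two detours, however, do not work as written. In approach~1 you apply Proposition~\ref{prop:2 positive} to the \emph{abelian} algebra $\mathcal{A}$ and then try to transport by $\mathfrak{F}\otimes\mathfrak{F}$; but $\mathfrak{F}$ is only a Hilbert-space unitary, not a $*$-isomorphism of $C^*$-algebras, so conjugating the $\diamond$-multiplication operator $x_j$ by $\mathfrak{F}$ yields the \emph{convolution} operator on $\mathcal{B}$, not the fusion matrix $M_j$. In approach~2, the Fourier multiplier $\hat{\Delta}=\sum_j d_j^{-1}Jx_jJ\otimes x_j\otimes x_j$ lives in $\mathcal{B}'\otimes\mathcal{B}\otimes\mathcal{B}$ and represents $T_3$, not $T_2$; the map whose Fourier multiplier gives $T_2$ is simply the identity $\mathcal{B}\to\mathcal{B}$, which brings you straight back to Proposition~\ref{prop:2 positive} applied to $\mathcal{B}$. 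So drop the detours and apply the proposition directly to $\mathcal{B}$; there is no gauge bookkeeping needed because $T_2=\sum_j M_j\otimes M_j$ has no $d_j$-weights at all.
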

\begin{proof}
By Proposition \ref{prop:2 positive}, we have
\begin{align*}
\sum_{j=1}^m Jx_j J\otimes x_j\geq0.
\end{align*}
The representation of this operator on the basis $\{J(x_i)J\otimes x_j\}$ is
\begin{align*}
\sum_{j=1}^m M_j\otimes M_j.
\end{align*}
So it is positive.
\end{proof}

\begin{proposition} Let $(\mathcal{A},\mathcal{B},d,\tau,\mathfrak{F})$ be a fusion bialgebra. Then the comultiplication $\Delta: \mathcal{R} \to \mathcal{R} \otimes \mathcal{R}$ is completely positive if and only if  the primary $3$-matrix is positive. If 
the comultiplication $\Delta$ is completely positive, then the primary $n$-matrix is positive for all $n$.
\end{proposition}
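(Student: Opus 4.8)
The plan is to treat the comultiplication $\Delta\colon\mathcal R\to\mathcal R\otimes\mathcal R$ as a linear map of finite-dimensional $C^*$-algebras and feed it into the Fourier-multiplier criterion of Subsection~\ref{subsec:Fourier Multiplier and Complete Positivity}, where the role of ``$\mathcal A$'' is now played by $\mathcal B$ (with its faithful trace $\tau$) and the role of ``$\mathcal B$'' by $\mathcal B\otimes\mathcal B$. Since $\{x_1,\dots,x_m\}$ is an orthonormal basis of $\mathcal B$ for $\tau$ (property~(4) of a based ring gives $\tau(x_i^*x_j)=\delta_{ij}$), Proposition~\ref{Prop: 1} applies directly and yields that $\Delta$ is completely positive if and only if its Fourier multiplier
\[
\widehat{\Delta}=\sum_{j=1}^m J(x_j)J\otimes\Delta(x_j)=\sum_{j=1}^m J(x_j)J\otimes\frac1{d_j}\,x_j\otimes x_j
\]
is positive in $\mathcal B'\otimes\mathcal B\otimes\mathcal B$, where $J$ is the modular conjugation of $(\mathcal B,\tau)$ and $d_j=\|M_j\|$.

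Next I would identify $\widehat{\Delta}$ with the primary $3$-matrix. Representing the finite-dimensional $C^*$-algebra $\mathcal B'\otimes\mathcal B\otimes\mathcal B$ faithfully by its left regular representation on the $L^2$-space of the product trace, whose orthonormal basis is $\{J(x_i)J\otimes x_k\otimes x_l\}$, positivity of the element is equivalent to positive-semidefiniteness of its matrix in that basis. Using $\big(J(x_j)J\big)\big(J(x_i)J\big)=J(x_ix_j)J$ and $x_jx_k=\sum_p N_{jk}^p x_p$, the same computation that underlies Proposition~\ref{prop:2 positivity}, with one additional tensor leg, shows this matrix equals $\sum_j \frac1{d_j}\,M_j\otimes M_j\otimes M_j$ (up to a transpose, which is irrelevant for positivity); since $\|M_j\|^2\big(M_j/\|M_j\|\big)^{\otimes 3}=\frac1{\|M_j\|}M_j^{\otimes 3}$, this is exactly $T_3$. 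Hence $\widehat{\Delta}\ge 0\iff T_3\ge 0$, which is the first assertion. This matrix identification — fixing the right orthonormal basis so that $C^*$-positivity, operator positivity and matrix positivity all coincide, checking that conjugation by $J$ produces the fusion matrix $M_i$ in the first leg, and tracking the scalar $1/d_j$ against the normalization $\|M_j\|^2(M_j/\|M_j\|)^{\otimes 3}$ — is the main technical point; the remainder is Frobenius-reciprocity bookkeeping.

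For the last statement I would iterate. One checks on basis elements that $\Delta^{(n)}=(\Delta\otimes\mathrm{id}_{\mathcal B^{\otimes(n-1)}})\circ\Delta^{(n-1)}$ (with $\Delta^{(1)}:=\Delta$), since $\frac1{d_j^{n-1}}\Delta(x_j)\otimes x_j^{\otimes(n-1)}=\frac1{d_j^n}x_j^{\otimes(n+1)}$, so by induction $\Delta^{(n)}$ is a composition of maps of the form $\Delta\otimes\mathrm{id}$. If $\Delta$ is completely positive, each such map is completely positive and composition preserves complete positivity, so $\Delta^{(n)}$ is completely positive for every $n\ge 1$. Applying Proposition~\ref{Prop: 1} again — with $\mathcal B$ in the role of ``$\mathcal A$'' and $\mathcal B^{\otimes(n+1)}$ in the role of ``$\mathcal B$'' — and repeating the matrix computation above, $\Delta^{(n)}$ is completely positive if and only if $\widehat{\Delta^{(n)}}=\sum_j J(x_j)J\otimes\frac1{d_j^{\,n}}x_j^{\otimes(n+1)}\ge 0$, whose matrix is $\sum_j \frac1{\|M_j\|^{\,n}}M_j^{\otimes(n+2)}=T_{n+2}$. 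Thus $T_{n+2}\ge 0$ for all $n\ge 1$, i.e.\ $T_k\ge 0$ for all $k\ge 3$; combined with Propositions~\ref{prop:1 positivity} and~\ref{prop:2 positivity} (which give $T_1,T_2\ge 0$ unconditionally) this gives $T_n\ge 0$ for all $n\ge 1$. I would emphasize in the write-up that $\Delta$ is not an algebra homomorphism, so this ``all $n$'' conclusion genuinely requires the complete-positivity bootstrap through the maps $\Delta^{(n)}$ rather than a naive iteration at the level of positive elements.
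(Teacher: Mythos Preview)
Your argument is correct and matches the paper's for the equivalence $\Delta$ completely positive $\iff T_3\ge 0$: both compute the Fourier multiplier $\widehat{\Delta}=\sum_j d_j^{-1}J(x_j)J\otimes x_j\otimes x_j$, represent it on the basis $\{J(x_i)J\Omega\otimes x_k\Omega\otimes x_l\Omega\}$ as $\sum_j\|M_j\|^{-1}M_j^{\otimes 3}=T_3$, and invoke Proposition~\ref{Prop: 1}.

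For the second assertion you take a slightly longer route than the paper. You re-apply Proposition~\ref{Prop: 1} to each $\Delta^{(n)}$, obtaining $\widehat{\Delta^{(n)}}\ge 0$ and hence $T_{n+2}\ge 0$, and then patch in $T_1,T_2$ separately. The paper instead observes that once $\Delta$ is completely positive, every $\Delta^{(n-1)}$ is (completely) positive, applies it to the single positive element $e=\sum_j d_j x_j$, and recognizes directly that the matrix of $\Delta^{(n-1)}(e)=\sum_j d_j^{2-n}x_j^{\otimes n}$ on the product basis is $T_n$. This handles all $n\ge 1$ in one stroke without appealing to Propositions~\ref{prop:1 positivity} and~\ref{prop:2 positivity}. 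Both arguments are valid; the paper's is a bit more economical, while yours has the side benefit of showing that in fact each $\Delta^{(n)}$ is completely positive (not merely positive).
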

\begin{proof}
For the comultiplication $\Delta: \mathcal{R} \to \mathcal{R} \otimes \mathcal{R}$, its Fourier multiplier is an operator in $\mathcal{R}' \otimes  \mathcal{R} \otimes \mathcal{R}$. We have
\begin{align*}
\hat{\Delta}&=\sum_{j=1}^m  J(x_j)J \otimes \Delta(x_j) =\sum_{j=1}^m d_j^{-1} J(x_j)J \otimes x_j \otimes x_j.\\
\end{align*}
The representation of $\hat{\Delta}$ on the basis $\{ J(x_i)J \Omega \otimes x_j \Omega  \otimes x_k \Omega\}$ is 
\begin{align*}
\sum_{j=1}^m \|M_j\|^{-1} M_j\otimes M_j \otimes M_j.
\end{align*}
By Proposition \ref{Prop: 1}, if the primary 3-matrix is positive, then $\Delta$ is completely positive. 
Then $\Delta^{(n)}$ is a positive map and $\Delta^{(n)}(e)$ is a positive operator. Therefore, the primary $n$-matrix as a representation of $\Delta^{n-1}(e)$ on the basis is positive.
\end{proof}
\begin{proposition} Let $(\mathcal{A},\mathcal{B},d,\tau,\mathfrak{F})$ be a fusion bialgebra. If  the primary $n$-matrix is positive, then the primary $(n-1)$-matrix is also positive.
\end{proposition}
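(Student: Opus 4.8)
The plan is to deduce positivity of the primary $(n-1)$-matrix $T_{n-1}$ from positivity of $T_n$ by a "partial trace" (really a slicing) argument, exploiting the tensor-power structure of $T_n$. Concretely, recall that
\[
T_n=\sum_{i=1}^m \|M_i\|^2\Big(\frac{M_i}{\|M_i\|}\Big)^{\otimes n}
=\sum_{i=1}^m \|M_i\|^{2-n} M_i^{\otimes n},
\]
an operator on $(\mathbb C^m)^{\otimes n}$. First I would contract the last tensor leg against a suitable fixed vector: choosing $\xi\in\mathbb C^m$ and forming $(I^{\otimes(n-1)}\otimes\xi^*)\,T_n\,(I^{\otimes(n-1)}\otimes\xi)$ yields $\sum_i \|M_i\|^{2-n}\langle\xi,M_i\xi\rangle\,M_i^{\otimes(n-1)}$, which is positive whenever $T_n\ge 0$. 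So it suffices to pick $\xi$ making the scalar weights come out to $\|M_i\|^{3-n}=\|M_i\|^{2-(n-1)}$, i.e. with $\langle\xi,M_i\xi\rangle=\|M_i\|$ for every $i$ simultaneously.

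The key observation is that such a $\xi$ exists: take $\xi=e_1$, the basis vector dual to the unit object $x_1=1$. Since $M_i$ is the fusion matrix of $x_i$, its $(k,1)$-entry is $N_{i,1}^k=\delta_{i,k}$, so $M_i e_1 = e_i$ and hence $\langle e_1, M_i e_1\rangle = \langle e_1, e_i\rangle = \delta_{1,i}$ — that gives $1$, not $\|M_i\|$, which is the wrong normalization. The fix is to contract against the Perron–Frobenius eigenvector instead. Actually the cleanest route avoids picking a single $\xi$: I would instead use that $T_{n-1}$ is, up to the representation identification used in the previous proposition, the image under the partial map "evaluate one leg at $e$" of $T_n$. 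More precisely, recall from the preceding proofs that $T_n$ represents $\Delta^{(n-1)}(e)$ where $e=\sum_j d_j x_j\ge 0$, and that convolution with $e$ implements contraction of one tensor leg; since $e\ast x_j = x_j$ (as $e$ acts as a scaled unit under $\ast$, by \eqref{def:convolution} and $e=\sum d_j x_j$ being a multiple of the Jones projection), applying $\mathrm{id}^{\otimes(n-1)}\otimes(e\,\ast\,\cdot)$ to $\Delta^{(n-1)}(e)$ returns $\Delta^{(n-2)}(e)$, which is represented by $T_{n-1}$. A completely positive / positive map sends positive operators to positive operators, and $x\mapsto e\ast x$ is positive when $\Delta$ is — but we only know $T_n\ge0$, not that $\Delta$ is CP, so I must make the contraction elementary rather than invoke positivity of $\ast$.

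Therefore the honest argument is the slicing one: write $T_n\ge 0$ on $(\mathbb C^m)^{\otimes n}$, and compress the $n$-th leg by the rank-one projection onto the Perron–Frobenius eigenvector $v$ of (say) the regular-representation matrix $\sum_j d_j M_j$, normalized so that the resulting scalar weights rescale $\|M_i\|^{2-n}$ into $\|M_i\|^{2-(n-1)}$. Compression by $I^{\otimes(n-1)}\otimes vv^*$ preserves positivity, and on the range it equals $\big(\sum_i \|M_i\|^{2-n}\langle v,M_i v\rangle M_i^{\otimes(n-1)}\big)\otimes vv^*$; so positivity of this forces $\sum_i \|M_i\|^{2-n}\langle v,M_i v\rangle M_i^{\otimes(n-1)}\ge 0$. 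The main obstacle — and the one point needing care — is arranging $\langle v, M_i v\rangle$ to be proportional to $\|M_i\|$ uniformly in $i$; this is exactly the content of simultaneous Perron–Frobenius data and may in general only hold up to a positive constant, which is harmless. If no single vector works, one falls back on the representation-theoretic description: $T_n\ge 0$ means the operator $\sum_i \|M_i\|^{2-n}M_i^{\otimes n}$ is positive, decompose each $M_i$ over the irreducible representations $\pi_s$ of $\mathcal R$ with multiplicities/dimensions; positivity of $T_n$ is a family of scalar inequalities indexed by $n$-tuples of irreps, and summing the $n$-th index against the nonnegative weights coming from $e$ collapses it to the $(n-1)$-tuple inequalities, i.e. $T_{n-1}\ge 0$. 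Either way the proof is one contraction step; I expect the write-up to be a few lines.
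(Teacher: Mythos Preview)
Your core idea---compress one tensor leg against the Perron--Frobenius vector---is correct and is essentially the paper's proof. The hedging is unnecessary: with $v=(d_1,\ldots,d_m)^T$ one has $M_i v = d_i v$ for every $i$ (since $\sum_j N_{ij}^k d_j = d_i d_k$ by Frobenius reciprocity), so $\langle v, M_i v\rangle = d_i\|v\|^2$ and your compression $(I^{\otimes(n-1)}\otimes v)^*\,T_n\,(I^{\otimes(n-1)}\otimes v)$ equals $\|v\|^2\, T_{n-1}$ on the nose. The paper writes the same contraction in matrix form: with $M:=\sum_j d_j M_j\ge 0$ (in fact $M=vv^T$) and $M_iM=d_iM$, it computes $\mathrm{Tr}_1\big[(M\otimes I)\,T_n\,(M\otimes I)\big]=\mathrm{Tr}(M^2)\,T_{n-1}$. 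You can discard the $e_1$ attempt and the representation-theoretic fallback; neither is needed.
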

\begin{proof}
Recall that the representation of $e$ on the basis $\{x_j\}$ is $\sum_{j=1}^m \|M_j\|M_j$, denoted by $M$. 
Then $M\geq0$ and $M_i M=d_i M$. Let ${\rm Tr}_1$ be the partial trace from $M_m(\mathbb{C})^{\otimes n}$ onto 
$M_m(\mathbb{C})^{\otimes (n-1)}$. Then 
\begin{align*}
{\rm Tr}_1[(M\otimes I) T_n (M\otimes I)]={\rm Tr}(M^2)T_{n-1}\geq0.
\end{align*}
So the primary $(n-1)$-matrix is positive.
\end{proof}
In summary, we have
\begin{theorem}\label{thm:complete positive primary matrix}
Let $(\mathcal{A},\mathcal{B},d,\tau,\mathfrak{F})$ be a fusion bialgebra. The following statements are equivalent:
\begin{enumerate}
\item $\Delta$ is completely positive;
\item the primary 3-matrix is positive;
\item the primary $n$-matrix is positive for some $n\geq 3$;
\item the primary $n$-matrix is positive for any $n \in \mathbb{N}$.
\end{enumerate}
\end{theorem}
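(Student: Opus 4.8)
The plan is to assemble Theorem~\ref{thm:complete positive primary matrix} purely by chaining together the four preceding propositions, which already supply every implication; the task is simply to verify that the cycle of implications closes. First I would record the equivalence $(1)\Leftrightarrow(2)$, which is exactly the content of the proposition computing the matrix representation of $\hat\Delta$ on the basis $\{J(x_i)J\Omega\otimes x_j\Omega\otimes x_k\Omega\}$ as $\sum_j \|M_j\|^{-1} M_j\otimes M_j\otimes M_j$ together with Proposition~\ref{Prop: 1}: the primary $3$-matrix is, after the gauge factor $\|M_j\|^{-1}\cdot(\text{representation of }e)$, precisely $\hat\Delta$ up to a positive scalar, so positivity of one is equivalent to positivity of the other, hence to complete positivity of $\Delta$.

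Next I would establish $(1)\Rightarrow(4)$: if $\Delta$ is completely positive then each iterate $\Delta^{(n)}$ is a positive map (being, up to identifications, a composition of $\Delta$ with itself tensored with identities), so $\Delta^{(n-1)}(e)$ is positive because $e\geq 0$ by Proposition~\ref{prop:1 positivity}; reading off its matrix representation on the product basis gives exactly $T_n\geq 0$ for every $n\geq 1$. The implication $(4)\Rightarrow(3)$ is trivial. To close the loop I would invoke the last proposition, which via the partial-trace identity ${\rm Tr}_1[(M\otimes I)T_n(M\otimes I)]={\rm Tr}(M^2)\,T_{n-1}$ gives $T_n\geq 0\Rightarrow T_{n-1}\geq 0$; iterating this descent from any $n\geq 3$ down to $n=3$ yields $(3)\Rightarrow(2)$. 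Combined with $(2)\Rightarrow(1)$ from the first step, the four statements are all equivalent.

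I would then write the proof as a short diagram of implications: $(1)\Leftrightarrow(2)$ by the $\hat\Delta$ computation and Proposition~\ref{Prop: 1}; $(1)\Rightarrow(4)$ by positivity of $\Delta^{(n)}$ applied to $e\geq0$; $(4)\Rightarrow(3)$ vacuously; $(3)\Rightarrow(2)$ by iterating the partial-trace reduction $T_n\geq0\Rightarrow T_{n-1}\geq0$ down to $n=3$. Since everything cited is proved earlier in the excerpt, no new estimate is needed.

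The only point demanding care — the ``main obstacle'' in an otherwise bookkeeping argument — is making sure the gauge normalizations line up: the primary $n$-matrix carries the weights $\|M_i\|^{2-n}$, whereas $\hat\Delta$ and $\Delta^{(n-1)}(e)$ carry the weights coming from $e=\sum_j d_j x_j$ and the factors $d_j^{-(n-1)}$ from $\Delta^{(n-1)}$; one must check that $d_j=\|M_j\|$ (the Perron--Frobenius dimension equals the operator norm of the fusion matrix) so that these bookkeeping factors agree and the ``representation on the basis'' statements literally produce $T_n$. Once that identification is in hand the theorem is immediate, so I would state it as a one-paragraph synthesis rather than reprove any of the component steps.
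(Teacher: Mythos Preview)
Your proposal is correct and follows essentially the same route as the paper: the theorem is stated there as a summary of the preceding propositions, and you chain exactly those implications $(1)\Leftrightarrow(2)$, $(1)\Rightarrow(4)$, $(4)\Rightarrow(3)$, $(3)\Rightarrow(2)$ in the same way. Your remark about the gauge normalization $d_j=\|M_j\|$ is fine but not really an obstacle---it is the definition of $d_j$ in the preliminaries.
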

\subsection{Main Results}
Suppose $\mathscr{P}$ is a subfactor planar algebra with index $\delta^2$. We define the comultiplication $\Delta$: $\mathscr{P}_{2,-}\to\mathscr{P}_{2,-}\otimes \mathscr{P}_{2,-}$ as a linear map such that
\begin{align}\label{eq:definition of comultiplication on subfactor}
\left\langle \Delta(z),x\otimes y\right\rangle=\left\langle z,x\ast y\right\rangle,\quad \forall x,y,z\in\mathscr{P}_{2,-}.
\end{align}
Switching the input discs and the output disc of the convolution tangle, we obtain the following surface tangle representing the comultiplication, see  \cite{Liu19} for the theory of surface tangles and surface algebras:
\begin{align*}
\begin{tikzpicture}
\begin{scope}
\draw [blue, ->] (0,0).. controls +(0, -0.3) and +(0, -0.3) .. (1, 0);
\draw [blue, dashed] (0,0).. controls +(0, 0.3) and +(0, 0.3) .. (1, 0);
\end{scope}
\begin{scope}[shift={(1.5, 0)}]
\draw [blue, ->] (0,0).. controls +(0, -0.3) and +(0, -0.3) .. (1, 0);
\draw [blue, dashed] (0,0).. controls +(0, 0.3) and +(0, 0.3) .. (1, 0);
\end{scope}
\begin{scope}[shift={(0.75, 1.5)}]
\draw [blue, ->] (0,0).. controls +(0, -0.3) and +(0, -0.3) .. (1, 0);
\draw [blue] (0,0).. controls +(0, 0.3) and +(0, 0.3) .. (1, 0);
\end{scope}
\draw [blue] (0,0) .. controls +(0, 0.5) and +(0, -0.5) .. (0.75, 1.5);
\draw [blue] (2.5,0) .. controls +(0, 0.5) and +(0, -0.5) .. (1.75, 1.5);
\draw [blue] (1,0) .. controls +(0, 0.3) and +(0, 0.3) .. (1.5, 0);
\draw (0.25,0.2)--(1, 1.7) (0.25,-0.2)--(1, 1.3) (2.25,-0.2)--(1.5, 1.3) (2.25,0.2)--(1.5, 1.7);
\draw (0.6, -0.24)  .. controls +(0, 0.7) and +(0, 0.7) .. (1.9, -0.24);
\draw (0.6, 0.24)  .. controls +(0, 0.7) and +(0, 0.7) .. (1.9, 0.24);
\end{tikzpicture}
\end{align*}
\begin{theorem}\label{thm:completely positive comultiplication}
Suppose $\mathscr{P}$ is a subfactor planar algebra. Then the comultiplication $\Delta$: $\mathscr{P}_{2,-}\to\mathscr{P}_{2,-}\otimes\mathscr{P}_{2,-}$ is completely positive.
\end{theorem}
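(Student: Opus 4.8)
The plan is to apply the Fourier-multiplier criterion of Proposition \ref{Prop: 1} with $\mathcal{A}=\mathscr{P}_{2,-}$ (which is finite dimensional) and $\mathcal{B}=\mathscr{P}_{2,-}\otimes\mathscr{P}_{2,-}$: the comultiplication $\Delta$ is completely positive if and only if its Fourier multiplier
\[
\hat{\Delta}=\sum_j J(x_j)J\otimes\Delta(x_j)\in\mathscr{P}_{2,-}'\otimes\mathscr{P}_{2,-}\otimes\mathscr{P}_{2,-}
\]
is positive, where $\{x_j\}$ is any orthonormal basis of $\mathscr{P}_{2,-}$ for $tr_{2,-}$. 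So the whole task reduces to showing $\hat{\Delta}\geq0$.

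First I would rewrite $\hat{\Delta}$ in a choice-free form. Combining the defining identity $\langle\Delta(z),x\otimes y\rangle=\langle z,x\ast y\rangle$ with the basis identity $\sum_j\langle w,x_j\rangle J(x_j)J=J(w)J$, one gets $\hat{\Delta}=\sum_{k,l}J(x_k\ast x_l)J\otimes x_k\otimes x_l$. From here two equivalent routes present themselves. Since $\Delta$ is exactly the adjoint, with respect to the Markov traces, of the convolution map $\ast\colon\mathscr{P}_{2,-}\otimes\mathscr{P}_{2,-}\to\mathscr{P}_{2,-}$, and since the trace-adjoint of a completely positive map between finite dimensional $C^*$-algebras is again completely positive (a Kraus form $\Phi=\sum_k V_k^*(\,\cdot\,)V_k$ dualizes to $\Phi^*=\sum_k V_k(\,\cdot\,)V_k^*$), it is enough to prove that the convolution is completely positive; alternatively one argues with $\hat{\Delta}$ directly. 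In either case one now reads off, from the convolution tangle (equivalently the comultiplication surface tangle displayed before the statement), a planar diagram representing the element in question, the $J$-conjugated legs producing the appropriate vacuum/trace closures.

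The substance is to recognize that this diagram is reflection-symmetric. The strategy follows the proof of the quantum Schur product theorem \cite[Theorem~4.1]{Liu16}: the convolution admits a Stinespring-type presentation inside the planar algebra --- a $*$-homomorphism (cabling), the insertion of a Jones projection, and a trace-preserving conditional expectation --- each of which is completely positive, and completely positive maps are closed under composition and tensoring. Assembling these and doubling the diagram by its mirror image exhibits $\hat{\Delta}$ literally as a sum $\sum_\alpha\eta_\alpha^*\eta_\alpha$ in $\mathscr{P}_{2,-}'\otimes\mathscr{P}_{2,-}\otimes\mathscr{P}_{2,-}$, which is positive by reflection positivity of the subfactor planar algebra --- i.e.\ positivity of the $*$-invariant inner products on its box spaces, see \cite{Liu19} for the surface-tangle formulation. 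I expect the diagrammatic bookkeeping to be the main obstacle: keeping track of the two shadings, placing the closures created by the $J$-conjugations correctly, and handling the step where the convolution theorem relating $\ast$ with the string Fourier transform $\mathfrak{F}_s$ is used, so as to be sure the final picture genuinely factors as a reflection-symmetric double rather than merely looking like one.
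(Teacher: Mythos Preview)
Your approach is correct in spirit but takes an unnecessary detour. The core idea you identify---that the relevant map factors through a $*$-homomorphism (cabling/inclusion into a higher box space) and a trace-preserving conditional expectation, hence is completely positive---is exactly the paper's argument. The difference is that the paper applies this decomposition \emph{directly to $\Delta$}: from the duality $\langle\Delta(z),x\otimes y\rangle=\langle z,x\ast y\rangle$ one reads off the planar-algebraic formula $\Delta(z)=\Psi\bigl(\iota(z)\bigr)$, where $\iota\colon\mathscr{P}_{2,-}\to\mathscr{P}_{4,-}$ adds a cap and a cup to $z$ (a $*$-isomorphism onto its image) and $\Psi\colon\mathscr{P}_{4,-}\to\mathscr{P}_{2,-}^{\otimes2}$ is the trace-preserving conditional expectation. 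This gives complete positivity in one line, with no Fourier multiplier, no dualization step, and none of the diagrammatic bookkeeping you worry about.

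Your route via Proposition~\ref{Prop: 1} is logically sound but circular in practice: you invoke the Fourier-multiplier criterion to reduce to $\hat{\Delta}\ge0$, but then your route (a) (convolution is CP, adjoint of CP is CP) already proves $\Delta$ is CP outright and never uses $\hat{\Delta}$; and route (b) (reflection positivity on $\hat{\Delta}$) would, once the diagram is drawn, amount to rediscovering the same factorization through $\mathscr{P}_{4,-}$. So the Fourier-multiplier framing adds nothing here. What you gain from your presentation is a clearer connection to the quantum Schur product theorem (your convolution-is-CP step strengthens \cite[Theorem~4.1]{Liu16} from positivity to complete positivity); what the paper's direct argument buys is brevity and the elimination of the ``main obstacle'' you anticipate.
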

\begin{proof}
From Equation \eqref{eq:definition of comultiplication on subfactor}, we have
\begin{align*}
tr_{2,-}\left(\raisebox{-1.5cm}{
\begin{tikzpicture}[scale=1.5]
\draw [blue, fill=white] (0,0) rectangle (0.5, 0.5);
\node at (0.25, 0.25) {$x$};
\draw (0.15, 0.5)--(0.15, 0.9) (0.15, 0)--(0.15, -0.5);
\draw (0.35, 0.5)--(0.35, 0.9) (0.35, 0)--(0.35, -0.5);
\begin{scope}[shift={(0.7, 0)}]
\draw [blue, fill=white] (0,0) rectangle (0.5, 0.5);
\node at (0.25, 0.25) {$y$};
\draw (0.15, 0.5)--(0.15, 0.9) (0.15, 0)--(0.15, -0.5);
\draw (0.35, 0.5)--(0.35, 0.9)  (0.35, 0)--(0.35, -0.5);
\end{scope}
\begin{scope}[shift={(0, -0.9)}]
\draw [blue, fill=white] (0,0) rectangle (1.2, 0.5);
\node at (0.6, 0.25) {$\Delta(z)$};
\draw (0.15, 0.5)--(0.15, 0.9)  (0.15, 0)--(0.15, -0.4);
\draw (0.35, 0.5)--(0.35, 0.9)  (0.35, 0)--(0.35, -0.4);
\draw (0.85, 0.5)--(0.85, 0.9)  (0.85, 0)--(0.85, -0.4);
\draw (1.05, 0.5)--(1.05, 0.9)  (1.05, 0)--(1.05, -0.4);
\end{scope}
\end{tikzpicture}}\right)
= tr_{2,-}\left(\raisebox{-1.5cm}{
\begin{tikzpicture}[scale=1.5]
\path [fill=white] (0.35, 0.5) .. controls +(0, 0.3) and +(0, 0.3) .. (0.85, 0.5)-- (0.85, 0) .. controls +(0, -0.3) and +(0, -0.3) .. (0.35, 0)--(0.35, 0.5);
\draw [blue, fill=white] (0,0) rectangle (0.5, 0.5);
\node at (0.25, 0.25) {$x$};
\draw (0.15, 0.5)--(0.15, 0.9) (0.15, 0)--(0.15, -0.5);
\begin{scope}[shift={(0.7, 0)}]
\draw [blue, fill=white] (0,0) rectangle (0.5, 0.5);
\node at (0.25, 0.25) {$y$};
\draw (0.35, 0.5)--(0.35, 0.9)  (0.35, 0)--(0.35, -0.5);
\end{scope}
\begin{scope}[shift={(0, -0.9)}]
\draw [blue, fill=white] (0,0) rectangle (1.2, 0.5);
\node at (0.6, 0.25) {$z$};
\draw (0.15, 0.5)--(0.15, 0.9)  (0.15, 0)--(0.15, -0.4);
\draw (1.05, 0.5)--(1.05, 0.9)  (1.05, 0)--(1.05, -0.4);
\end{scope}
\draw (0.35, 0.5) .. controls +(0, 0.3) and +(0, 0.3) .. (0.85, 0.5);
\draw (0.35, 0) .. controls +(0, -0.3) and +(0, -0.3) .. (0.85, 0);
\end{tikzpicture}}\right)=
tr_{2,-}\left(\raisebox{-1.5cm}{
\begin{tikzpicture}[scale=1.5]
\path [fill=white] (0.35, 0.5) .. controls +(0, 0.3) and +(0, 0.3) .. (0.85, 0.5)-- (0.85, 0) .. controls +(0, -0.3) and +(0, -0.3) .. (0.35, 0)--(0.35, 0.5);
\draw [blue, fill=white] (0,0) rectangle (0.5, 0.5);
\node at (0.25, 0.25) {$x$};
\draw (0.15, 0.5)--(0.15, 0.9) (0.15, 0)--(0.15, -0.5);
\draw (0.35, 0.5)--(0.35, 0.9) ;
\begin{scope}[shift={(0.7, 0)}]
\draw [blue, fill=white] (0,0) rectangle (0.5, 0.5);
\node at (0.25, 0.25) {$y$};
\draw (0.35, 0.5)--(0.35, 0.9)  (0.35, 0)--(0.35, -0.5);
\draw (0.15, 0.5)--(0.15, 0.9);
\end{scope}
\begin{scope}[shift={(0, -0.9)}]
\draw [blue, fill=white] (0,0) rectangle (1.2, 0.5);
\node at (0.6, 0.25) {$z$};
\draw (0.15, 0.5)--(0.15, 0.9)  (0.15, 0)--(0.15, -0.4);
\draw (1.05, 0.5)--(1.05, 0.9)  (1.05, 0)--(1.05, -0.4);
\end{scope}
\draw (0.35, 0) .. controls +(0, -0.3) and +(0, -0.3) .. (0.85, 0);
\begin{scope}[shift={(0, -1.8)}]
\draw (0.35, 0.5) .. controls +(0, 0.3) and +(0, 0.3) .. (0.85, 0.5);
\end{scope}
\end{tikzpicture}}\right)
\end{align*}
Let 
\begin{align*}
\Psi: \sP_{4,\pm} \to \sP_{2,\pm}^{\otimes 2}
\end{align*}
be the trace-preserving conditional expectation. 
Then one could obtain that $\Delta$ has graphical
representation as follows:
\begin{align}\label{eq:comultiplication}
\Delta(z)=\Psi\left(
\raisebox{-0.8cm}{
\begin{tikzpicture}
\draw (0.9, -0.4)--(0.1, 1.4) (0.1, -0.4)--(0.9, 1.4) ; 
\draw [blue, fill=white] (0, 0) rectangle (1, 1);
\node at (0.5, 0.5) {$z$};
\draw (0.3, -.4).. controls + (0, 0.3) and +(0, 0.3) .. (0.7, -0.4);
\draw (0.3, 1.4).. controls + (0, -0.3) and +(0, -0.3) .. (0.7, 1.4);
\end{tikzpicture}}\right).
\end{align}
This indicates that $\Delta$ is a composition (up to a scalar) of a $*$-isomorphism and a conditional expectation. So $\Delta$ is completely positive.
\end{proof}
\begin{remark}\label{rem:comultiplication stronger than convolution}
The  positivity of the comultiplication $\Delta$: $\mathscr{P}_{2,-}\to\mathscr{P}_{2,-}\otimes\mathscr{P}_{2,-}$ indicates the positivity of the convolution: $x\ast y\geq0$, for $x,y\geq0$, $x,y\in\mathscr{P}_{2,-}$, the Schur product theorem \cite[Theorem 4.1]{Liu16}.
\end{remark}
\begin{proposition}\label{prop:subfactorized imply complete positivity}
Let $(\mathcal{A},\mathcal{B},d,\tau,\mathfrak{F})$ be a fusion bialgebra. If it is subfactorizable, then the comultiplication $\Delta$:
$\mathcal{B}\to \mathcal{B}\otimes\mathcal{B}$ is completely positive and the primary $n$-matrix in \eqref{eq:positive criterion matrix} is positive for all $n$. 
\end{proposition}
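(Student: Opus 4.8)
The plan is to reduce the statement about fusion bialgebras to the already-established complete positivity of the comultiplication for subfactor planar algebras (Theorem \ref{thm:completely positive comultiplication}). By hypothesis, the fusion bialgebra $(\mathcal{A},\mathcal{B},d,\tau,\mathfrak{F})$ is subfactorizable, meaning it coincides — up to gauge parameters $(\lambda_1,\lambda_2)$ — with the quintuple $(\mathscr{P}_{2,+},\mathscr{P}_{2,-},tr_{2,+},tr_{2,-},\mathfrak{F}_s)$ of some subfactor planar algebra $\mathscr{P}$. So first I would record precisely how the comultiplication $\Delta$ on $\mathcal{B}$ (defined via $\Delta(x_j) = d_j^{-1} x_j \otimes x_j$) matches, under the identification $\mathcal{B} \cong \mathscr{P}_{2,-}$, the comultiplication on $\mathscr{P}_{2,-}$ given by Equation \eqref{eq:definition of comultiplication on subfactor}: both are the dual of the convolution product with respect to the relevant traces, so they agree once one checks that the convolution $\ast$ on $\mathcal{B}$ corresponds to the convolution on $\mathscr{P}_{2,-}$ under subfactorization — this is part of what it means for the fusion bialgebra to arise from the planar algebra.

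The one genuine subtlety is the gauge parameters: the comultiplication as defined on $\mathcal{B}$ uses the specific trace normalization of the fusion bialgebra arising from $\mathcal{R}$, whereas $\mathscr{P}_{2,-}$ carries the Markov trace $tr_{2,-}$, and these differ by the scalars $\lambda_1, \lambda_2$. I would argue that rescaling the traces by positive scalars rescales $\Delta$ only by a positive scalar (and adjusts the basis normalization by positive factors), hence preserves complete positivity; complete positivity is insensitive to such gauge changes. Therefore $\Delta \colon \mathcal{B} \to \mathcal{B} \otimes \mathcal{B}$ inherits complete positivity from $\Delta \colon \mathscr{P}_{2,-} \to \mathscr{P}_{2,-} \otimes \mathscr{P}_{2,-}$, which is completely positive by Theorem \ref{thm:completely positive comultiplication}.

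Once complete positivity of $\Delta$ is in hand, the positivity of the primary $n$-matrix for all $n$ is immediate from the results already proved in \S\ref{subsec:Fourier Multiplier and Complete Positivity} and the subsequent subsection: by Theorem \ref{thm:complete positive primary matrix}, complete positivity of $\Delta$ is equivalent to positivity of the primary $3$-matrix and hence to positivity of the primary $n$-matrix for every $n \in \mathbb{N}$. (Concretely, the argument there runs: $\Delta$ completely positive $\Rightarrow$ each higher comultiplication $\Delta^{(n)}$ is a positive map $\Rightarrow$ $\Delta^{(n-1)}(e) \geq 0$, where $e = \sum_j d_j x_j$ is a positive multiple of a projection, and the representation of $\Delta^{(n-1)}(e)$ on the standard basis is exactly $\sum_{i=1}^m \|M_i\|^{-(n-2)} M_i^{\otimes n} = \sum_{i=1}^m \|M_i\|^2 (M_i/\|M_i\|)^{\otimes n}$.)

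I expect the only real obstacle to be the bookkeeping in the first two paragraphs — verifying that the abstractly-defined comultiplication on the fusion bialgebra is literally the planar-algebraic comultiplication under the subfactorization identification, and that the gauge parameters do not disturb positivity. Everything after that is a direct appeal to Theorem \ref{thm:completely positive comultiplication} and Theorem \ref{thm:complete positive primary matrix}.
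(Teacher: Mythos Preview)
Your proposal is correct and follows essentially the same approach as the paper: identify $\Delta$ on $\mathcal{B}$ with the planar-algebraic comultiplication on $\mathscr{P}_{2,-}$, invoke Theorem~\ref{thm:completely positive comultiplication} for complete positivity, then apply Theorem~\ref{thm:complete positive primary matrix} for the primary $n$-matrices. The paper's proof is in fact terser than yours---it simply asserts the two comultiplications are ``consistent'' without discussing gauge parameters---so your extra care on that point is a refinement rather than a deviation.
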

\begin{proof}
Suppose the fusion bialgebra arises from a subfactor planar algebra $\mathscr{P}$.
Then the comultiplication $\Delta$: $\mathcal{B}\to \mathcal{B}\otimes\mathcal{B}$ is consistent with the comultiplication $\Delta$: $\mathscr{P}_{2,-}\to \mathscr{P}_{2,-}\otimes\mathscr{P}_{2,-}$. So by Theorem \ref{thm:completely positive comultiplication}, it is completely positive. By Theorem \ref{thm:complete positive primary matrix}, the primary $n$-matrix is positive for all $n$.
\end{proof}
A fusion ring $\mathcal{R}$ admits a unitary categorification means it is the Grothendieck ring of a unitary fusion category. 
For the definition of fusion category, we refer the readers to  \cite{ENO05} and \cite[Section 1.12]{EGNO15}. 
The Grothendieck ring of a fusion category is a fusion ring.
A unitary fusion category is a fusion category with a unitary structure (See e.g. \cite[Remark 9.4.7]{EGNO15}). If $\mathcal{R}$ is the Grothendieck ring of a unitary fusion category, then the canonical fusion bialgebra associated to the fusion ring is subfactorizable by quantum double construction \cite[Proposition 7.4]{LPW21}. 

Let $(\mathscr{P}_{2,+},\mathscr{P}_{2,-},{\rm Tr}_{2,+},tr_{2,-},\mathfrak{F})$ be the canonical fusion bialgebra associated to the canonical Frobenius algebra $\gamma$ of $\mathscr{C}\otimes \overline{\mathscr{C}}$
in the quantum double construction, see Appendix \ref{Sec: Principal Graph}. Here  ${\rm Tr}_{2,+}$ is the unnormalized Markov trace on $\mathscr{P}_{2,+}$ and $\mathfrak{F}=\delta\mathfrak{F}_s$.
From Equations \eqref{eq:definition of comultiplication on subfactor} and \eqref{eq: 1}, we have that the comultiplication $\Delta$: $\mathscr{P}_{2,-}\to\mathscr{P}_{2,-}\otimes \mathscr{P}_{2,-}$ satisfying
\begin{align*}
\left\langle \Delta(\mathfrak{F}(\beta_j)) ,\mathfrak{F}(\beta_k)\otimes \mathfrak{F}(\beta_i)\right\rangle
&=\left\langle \mathfrak{F}(\beta_j) ,\mathfrak{F}(\beta_k)\ast \mathfrak{F}(\beta_i)\right\rangle\\
&=\dfrac{\delta_{k,i}}{d_i}\left\langle \mathfrak{F}(\beta_j) ,\mathfrak{F}(\beta_k)\right\rangle\\
&=\dfrac{\delta_{k,i}\delta_{jk}}{d_i}.
\end{align*}
We see that
\begin{align}\label{eq: 3}
\Delta(\fF(\beta_j))
=\frac{1}{d_j} \fF(\beta_j)\otimes \fF(\beta_j).
\end{align}
Let $(\mathcal{A},\mathcal{B},d,\tau,\mathfrak{F})$ be the canonical fusion bialgebra associated to the Grothendieck ring $\mathcal{R}$ of a unitary fusion category $\mathscr{C}$. Then it is isomorphic to $(\mathscr{P}_{2,+},\mathscr{P}_{2,-},{\rm Tr}_{2,+},tr_{2,-},\mathfrak{F})$ by mapping $x_j$ to $\beta_j$ \cite[Proposition 7.4]{LPW21}.
So the comultiplication $\Delta$: $\mathscr{P}_{2,-}\to\mathscr{P}_{2,-}\otimes \mathscr{P}_{2,-}$ and the comultiplication 
$\Delta$: $\mathcal{B}\to\mathcal{B}\otimes\mathcal{B}$ are consistent. 
\begin{theorem}[Primary criteria]\label{thm:Higher Positivity Criterion}
Suppose $\mathcal{R}$ is a fusion ring with basis $\{x_1=1, x_2,\ldots,x_m\}$  and $M_j$ is the fusion matrix of $x_j$. 
If $\mathcal{R}$ admits a unitary categorification, then for any $n \geq 1$, we have
\begin{align}\label{eq:n positivity criterion}
\sum_{j=1}^m \|M_j\|^2\bigg(\frac{M_j}{\|M_j\|}\bigg)^{\otimes n}\geq 0.
\end{align}
\end{theorem}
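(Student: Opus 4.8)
The plan is to reduce everything to the complete positivity of the comultiplication on subfactor planar algebras (Theorem \ref{thm:completely positive comultiplication}) together with the equivalences of Theorem \ref{thm:complete positive primary matrix}; the only real work is to see that a unitary categorification forces the associated fusion bialgebra to be subfactorizable with the expected comultiplication.

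First I would unwind the hypothesis. A unitary categorification of $\mathcal{R}$ is a unitary fusion category $\mathscr{C}$ whose Grothendieck ring is $\mathcal{R}$. Running the quantum double construction on the canonical Frobenius algebra $\gamma$ of $\mathscr{C}\otimes\overline{\mathscr{C}}$ produces a subfactor planar algebra $\mathscr{P}$, and by \cite[Proposition 7.4]{LPW21} the canonical fusion bialgebra $(\mathcal{A},\mathcal{B},d,\tau,\mathfrak{F})$ attached to $\mathcal{R}$ is isomorphic, up to gauge parameters, to $(\mathscr{P}_{2,+},\mathscr{P}_{2,-},\mathrm{Tr}_{2,+},tr_{2,-},\mathfrak{F})$ via $x_j\mapsto\beta_j$. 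In particular $\mathcal{R}$ is subfactorizable.

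Next I would match the two comultiplications. On the planar-algebra side, the defining relation \eqref{eq:definition of comultiplication on subfactor} together with the convolution rule $\mathfrak{F}(\beta_k)\ast\mathfrak{F}(\beta_i)=\delta_{k,i}d_i^{-1}\mathfrak{F}(\beta_i)$ forces $\Delta(\mathfrak{F}(\beta_j))=d_j^{-1}\,\mathfrak{F}(\beta_j)\otimes\mathfrak{F}(\beta_j)$, which under the isomorphism above is precisely the abstract comultiplication $\Delta(x_j)=d_j^{-1}x_j\otimes x_j$ on $\mathcal{B}$; hence $\Delta\colon\mathcal{B}\to\mathcal{B}\otimes\mathcal{B}$ is identified with $\Delta\colon\mathscr{P}_{2,-}\to\mathscr{P}_{2,-}\otimes\mathscr{P}_{2,-}$. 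By Theorem \ref{thm:completely positive comultiplication} the latter is completely positive, being (up to a scalar) the composite of a $*$-isomorphism with the conditional expectation in \eqref{eq:comultiplication}. Equivalently, one can simply quote Proposition \ref{prop:subfactorized imply complete positivity}.

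Finally I would invoke Theorem \ref{thm:complete positive primary matrix}: complete positivity of $\Delta$ is equivalent to positivity of the primary $3$-matrix and implies positivity of the primary $n$-matrix for every $n\in\mathbb{N}$; the cases $n=1,2$ hold unconditionally by Propositions \ref{prop:1 positivity} and \ref{prop:2 positivity}. Together these yield \eqref{eq:n positivity criterion} for all $n\geq1$. The step requiring care is the identification in the second paragraph: one must match the basis $\{x_j\}$ with $\{\beta_j\}$ and the quantum dimensions $d_j$ with the operator norms $\|M_j\|$, so that the normalization $M_j/\|M_j\|$ appearing in the primary $n$-matrix is the correct one, and one must track the gauge parameters $(\lambda_1,\lambda_2)$; since rescaling $d$ and $\tau$ by positive constants preserves positivity, the gauge freedom is harmless.
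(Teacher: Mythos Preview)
Your proposal is correct and follows essentially the same route as the paper: subfactorizability of the fusion bialgebra via the quantum double construction, then Proposition \ref{prop:subfactorized imply complete positivity} (or equivalently Theorem \ref{thm:completely positive comultiplication}) for complete positivity of $\Delta$, and finally Theorem \ref{thm:complete positive primary matrix} for positivity of all primary $n$-matrices. Your extra paragraphs spelling out the identification $x_j\leftrightarrow\beta_j$, the matching of the two comultiplications, and the harmlessness of the gauge parameters are details the paper records in the discussion immediately preceding the theorem rather than in the proof itself, so you are simply being more explicit.
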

\begin{proof}
Let $(\mathcal{A},\mathcal{B},d,\tau,\mathfrak{F})$ be the fusion bialgebra associated to $\mathcal{R}$. Then it is subfactorizable. So the comultiplication $\Delta$: $\mathcal{B}\to\mathcal{B}\otimes \mathcal{B}$ is completely positive by Proposition \ref{prop:subfactorized imply complete positivity}. Further by Theorem \ref{thm:complete positive primary matrix}, the primary $n$-matrix is positive for all $n$.
\end{proof}

\begin{definition}
For any $n\geq1$,  we call Inequality (\ref{eq:n positivity criterion})  as \textbf{primary $n$-criterion} of unitary categorification of fusion rings.
\end{definition}

\begin{remark}
Though primary $n$-criterion are equivalent when $n\geq3$, we  will show that they are not locally equivalent
in \S \ref{sec:reduced twist criteria}. 
\end{remark}

\begin{proposition}
Suppose $\mathcal{R}$ is a fusion ring. If $\mathcal{R}$ passes
the primary $3$-criterion  then $\mathcal{R}$ passes the Schur product criterion.
\end{proposition}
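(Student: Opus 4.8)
The plan is to chain together the equivalences and implications already established in this section; no new input is needed. First I would unwind the hypothesis. By definition, $\mathcal{R}$ passing the primary $3$-criterion means precisely that the primary $3$-matrix $T_3(\mathcal{B})=\sum_{j=1}^m \|M_j\|^2\bigl(M_j/\|M_j\|\bigr)^{\otimes 3}$ of the associated fusion bialgebra $(\mathcal{A},\mathcal{B},d,\tau,\mathfrak{F})$ is positive. By the equivalence $(2)\Leftrightarrow(1)$ in Theorem \ref{thm:complete positive primary matrix}, this is the same as saying that the comultiplication $\Delta:\mathcal{B}\to\mathcal{B}\otimes\mathcal{B}$ is completely positive.

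Next, a completely positive map is in particular positive, so $\Delta$ is positive. Then Proposition \ref{prop:comultiplication positive imply convolution positive fusion bialgebra} applies directly: for $x,y\geq 0$ in $\mathcal{B}$ and any $z\geq 0$ in $\mathcal{B}$ one has $\left\langle x\ast y,z\right\rangle=\left\langle x\otimes y,\Delta(z)\right\rangle\geq 0$, whence $x\ast y\geq 0$. This positivity of the convolution on $\mathcal{B}$ is exactly the content of the Schur product criterion, the reformulations in \cite[Proposition 8.3, Corollary 8.5]{LPW21} being equivalent restatements in terms of irreducible representations or the character table. Hence $\mathcal{R}$ passes the Schur product criterion.

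There is no substantial obstacle here; the statement is a formal corollary of Theorem \ref{thm:complete positive primary matrix} together with Proposition \ref{prop:comultiplication positive imply convolution positive fusion bialgebra}, reflecting the conceptual point of Remark \ref{rem:comultiplication stronger than convolution} that positivity of the comultiplication is a strictly stronger requirement than positivity of the convolution. The only minor point requiring care is to phrase the conclusion against the precise formulation of the Schur product criterion in use: once ``passing the Schur product criterion'' is identified with ``$x\ast y\geq 0$ for all $x,y\geq 0$ in $\mathcal{B}$'', the argument closes immediately, and the companion estimates of \cite{HLW21} are not needed for this implication.
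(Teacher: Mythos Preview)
Your proposal is correct and follows essentially the same route as the paper: pass from positivity of the primary $3$-matrix to complete positivity of $\Delta$ via Theorem~\ref{thm:complete positive primary matrix}, then invoke Proposition~\ref{prop:comultiplication positive imply convolution positive fusion bialgebra} to obtain positivity of the convolution, which is the Schur product criterion. The only differences are cosmetic---you spell out the intermediate step that complete positivity implies positivity and repeat the inner-product computation from the proof of Proposition~\ref{prop:comultiplication positive imply convolution positive fusion bialgebra}---but the logical skeleton is identical.
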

\begin{proof}
Let $(\mathcal{A},\mathcal{B},d,\tau,\mathfrak{F})$ be the fusion bialgebra arising from the fusion ring $\mathcal{R}$. 
If $\mathcal{R}$ passes the primary $3$-criterion, then the primary 3-matrix is positive.
Theorem \ref{thm:complete positive primary matrix} indicates that the comultiplication $\Delta:\mathcal{B}\to\mathcal{B}\otimes\mathcal{B}$ is completely positive.
By Proposition \ref{prop:comultiplication positive imply convolution positive fusion bialgebra}, we have $x*y \geq 0$ for any $x, y\geq 0$, $x,y\in\mathcal{B}$. So $\mathcal{R}$ passes the Schur product criterion.
\end{proof}

In \cite{VS22}, many fusion rings of low rank were produced by computers. One of the three simple non-commutative fusion rings 
 in the dataset of \cite{VS22} can be excluded from unitary categorification by primary $3$-criterion. 
\begin{proposition} \label{prop:nc6}
Let $\mathcal{R}_6$ be the following non-commutative simple fusion ring  of rank $6$, type $[[1,1],[7+2\sqrt{13},3],[ 11+3\sqrt{13},2]]$ with fusion matrices:
\begin{align*}
\begin{smallmatrix}
1&0&0&0&0&0\\
0&1&0&0&0&0\\
0&0&1&0&0&0\\
0&0&0&1&0&0\\
0&0&0&0&1&0\\
0&0&0&0&0&1
\end{smallmatrix},
\begin{smallmatrix}
0&1&0&0&0&0\\
1&4&2&2&2&2\\
0&2&2&1&2&4\\
0&2&1&2&4&2\\
0&2&2&4&5&4\\
0&2&4&2&4&5
\end{smallmatrix},
\begin{smallmatrix}
0&0&1&0&0&0\\
0&2&2&1&4&2\\
0&1&3&1&3&3\\
1&2&3&3&1&3\\
0&2&3&3&5&4\\
0&4&1&3&4&5
\end{smallmatrix},
\begin{smallmatrix}
0&0&0&1&0&0\\
0&2&1&2&2&4\\
1&2&3&3&3&1\\
0&1&1&3&3&3\\
0&4&3&1&5&4\\
0&2&3&3&4&5
\end{smallmatrix},
\begin{smallmatrix}
0&0&0&0&1&0\\
0&2&4&2&5&4\\
0&4&1&3&5&4\\
0&2&3&3&5&4\\
1&5&5&5&5&7\\
0&4&4&4&7&7
\end{smallmatrix},
\begin{smallmatrix}
0&0&0&0&0&1\\
0&2&2&4&4&5\\
0&2&3&3&4&5\\
0&4&3&1&4&5\\
0&4&4&4&7&7\\
1&5&5&5&7&5
\end{smallmatrix}
\end{align*}
Then $\mathcal{R}_6$ admits no unitary categorification.
\end{proposition}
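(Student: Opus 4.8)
The plan is to apply the primary $3$-criterion, Theorem~\ref{thm:Higher Positivity Criterion} with $n=3$, and exhibit a vector on which the primary $3$-matrix of $\mathcal{R}_6$ is strictly negative. First I would record the operator norms. Since $\|M_j\|$ is the Perron--Frobenius eigenvalue of $M_j$, the type $[[1,1],[7+2\sqrt{13},3],[11+3\sqrt{13},2]]$ gives
\[
\|M_1\|=1,\qquad \|M_2\|=\|M_3\|=\|M_4\|=7+2\sqrt{13},\qquad \|M_5\|=\|M_6\|=11+3\sqrt{13},
\]
which one verifies directly by checking that $(1,\,7+2\sqrt{13},\,7+2\sqrt{13},\,7+2\sqrt{13},\,11+3\sqrt{13},\,11+3\sqrt{13})$ is a Perron eigenvector of each $M_j$. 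Using $\|M_j\|^{2}(M_j/\|M_j\|)^{\otimes 3}=\|M_j\|^{-1}M_j^{\otimes 3}$, the inequality to be tested becomes
\[
T_3=M_1^{\otimes 3}+\frac{M_2^{\otimes 3}+M_3^{\otimes 3}+M_4^{\otimes 3}}{7+2\sqrt{13}}+\frac{M_5^{\otimes 3}+M_6^{\otimes 3}}{11+3\sqrt{13}}\ \geq\ 0,
\]
a self-adjoint $216\times 216$ matrix with entries in $\mathbb{Q}(\sqrt{13})$ (self-adjointness because $M_{j^*}=M_j^{\mathsf{T}}$ and $\|M_j\|=\|M_{j^*}\|$, so the sum is invariant under transposition).

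Next I would produce an explicit negativity certificate, i.e.\ a vector $v$ with $\langle T_3 v,v\rangle<0$, which by Theorem~\ref{thm:Higher Positivity Criterion} rules out unitary categorification. Since $(7+2\sqrt{13})^{-1}=(2\sqrt{13}-7)/3$ and $(11+3\sqrt{13})^{-1}=(11-3\sqrt{13})/4$, the matrix $12\,T_3$ has entries in $\mathbb{Z}[\sqrt{13}]$, so every step can be carried out symbolically. In practice one locates, numerically, the eigenvector for the least eigenvalue of $T_3$ --- or, to shrink the size, of the principal submatrix of $T_3$ supported on the coordinates $S\times S\times S$ for a small $S\subseteq\{1,\dots,6\}$, which equals $\sum_j\|M_j\|^{2}(M_j^S/\|M_j\|)^{\otimes 3}$ and inherits positivity from $T_3$ --- then rounds it to a nearby vector $v$ over $\mathbb{Z}[\sqrt{13}]$ and checks $\langle T_3 v,v\rangle<0$ in exact arithmetic, deciding the sign of a quantity $a+b\sqrt{13}$ by comparing $a^2$ with $13b^2$ (and the signs of $a,b$).

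The genuine obstacle is presentational rather than conceptual: $T_3$ is far too large to display, so the real task is to package a small, independently reproducible certificate --- a low-dimensional localized submatrix with a strictly negative eigenvalue, or a sparse explicit vector $v$ --- whose sign can be verified by exact arithmetic. The preliminary reductions (the Perron eigenvalues, self-adjointness, and $T_3=\sum_j\|M_j\|^{-1}M_j^{\otimes 3}$) are routine.
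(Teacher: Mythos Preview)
Your proposal is correct and follows essentially the same route as the paper: apply Theorem~\ref{thm:Higher Positivity Criterion} with $n=3$ and show that $T_3$ fails to be positive. The paper simply reports a negative eigenvalue $\approx -1.176375$ of the full $216\times 216$ matrix via a SageMath computation (Appendix~\ref{App:SNC6}); your version is more careful in proposing an exact certificate over $\mathbb{Z}[\sqrt{13}]$ and in noting the option of localizing to a smaller $S$, but the underlying argument is the same.
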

\begin{proof}
The primary $3$-matrix $T_3$ of $\mathcal{R}_6$ has a negative eigenvalue $\simeq -1.176375$ (See Appendix \ref{App:SNC6} for SageMath code). 
So $\mathcal{R}_6$ admits no unitary categorification by Theorem \ref{thm:Higher Positivity Criterion}.
\end{proof}
Moreover, we complete the full computation on their dataset in \cite{VS22}:
\begin{itemize}
\item Among the 28451 fusion rings, exactly 19738 does not pass primary 3-criterion (about 69.37$\%$);
\item Among the 14558 non-simple ones, exactly 8009 does not pass primary 3-criterion (about 55.01$\%$);
\item Among the 13893 simple ones, exactly 11729 ones does not pass primary 3-criterion (about 84.4$\%$).
\end{itemize}
It turns out that primary $3$-criterion is more efficient for simple fusion rings.

\begin{remark}
The primary criteria in Inequality \eqref{eq:n positivity criterion}  are very easy to check, and friendly to computers for all fusion rings including non-commutative cases. It is hard to check non-commutative fusion rings by the Schur product criterion (See Proposition 8.3 in \cite{LPW21}) since we have to know all the irreducible unital $*$-representations of the non-commutative fusion rings. So checking noncommutative fusion rings is an advantage of the primary criteria.
\end{remark}
\begin{remark}
We need to compute the full character table of fusion matrices to apply the Schur product criterion. But the full  fusion rules
are not always known, so the Schur product criterion may not be valid all the time. In \S \ref{sec:localized criteria} and \S \ref{sec:reduced twist criteria}, we introduce
localized versions of primary
criteria to deal with this problem, which could check fusion rings with sparse data.
\end{remark}

\section{Localized Criteria}\label{sec:localized criteria}
In this section, we localize the primary criteria via the basic fact that submatrices inherit the positivity of matrices. In this way, we only need to check the positivity of a small submatrix instead of  a full large matrix. This would improve the efficiency of calculations. Moreover, this localized method makes it applicable to fusion rings with incomplete data, which is superior to the Schur product criterion.
\subsection{Results and Definitions}
\begin{theorem}[Localized criteria]\label{thm:localization criterion}
Suppose $\mathcal{R}$ is a fusion ring with basis $\{x_1=1,x_2,\ldots,x_m\}$  and $M_i$ is the fusion matrix of $x_i$. Let
$S\subseteq \{1,2,\ldots,m\}$ be a subset.
 If $\mathcal{R}$ admits a unitary categorification, then
\begin{align}\label{eq:n positivity locolization criterion}
\sum_{i=1}^m \|M_i\|^2\bigg(\frac{M_i^S}{\|M_i\|}\bigg)^{\otimes n}\geq0,\quad \forall n\geq1,
\end{align}
where $M_i^S$ is the sub-matrix of $M_i$ with rows and columns in $S$.
\end{theorem}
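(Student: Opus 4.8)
The plan is to obtain the localized inequality as an immediate consequence of the primary $n$-criterion (Theorem \ref{thm:Higher Positivity Criterion}) together with the elementary fact that a principal submatrix of a positive semidefinite matrix is again positive semidefinite; concretely, the left-hand side of \eqref{eq:n positivity locolization criterion} is the compression of the primary $n$-matrix to a coordinate tensor subspace.

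First I would invoke Theorem \ref{thm:Higher Positivity Criterion}: since $\mathcal{R}$ admits a unitary categorification, the primary $n$-matrix $T_n=\sum_{i=1}^m \|M_i\|^2 (M_i/\|M_i\|)^{\otimes n}$ is positive semidefinite as an operator on $(\mathbb{C}^m)^{\otimes n}$. Next, given $S\subseteq\{1,\dots,m\}$, let $P\in M_m(\mathbb{C})$ be the orthogonal projection onto $V_S:=\mathrm{span}\{e_j: j\in S\}\subseteq\mathbb{C}^m$. Then $P^{\otimes n}$ is the orthogonal projection onto $V_S^{\otimes n}$, and the compression $P^{\otimes n}T_n P^{\otimes n}$, regarded as an operator on $V_S^{\otimes n}$, is positive semidefinite because $T_n$ is (for any operator $X$ and $T\geq 0$ one has $X^*TX\geq 0$). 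Finally I would identify this compression: for each $i$ the block $PM_iP$ restricted to $V_S$ is exactly the principal submatrix $M_i^S$, and since compressing to a coordinate subspace is multiplicative across tensor factors, $(P\otimes P)(M_i\otimes M_i)(P\otimes P)=PM_iP\otimes PM_iP$ and hence, by induction, $P^{\otimes n}M_i^{\otimes n}P^{\otimes n}$ restricted to $V_S^{\otimes n}$ equals $(M_i^S)^{\otimes n}$. The normalizing scalars $\|M_i\|$ are the operator norms on the full space $\mathbb{C}^m$ and are unchanged. Therefore
\[
P^{\otimes n}T_n P^{\otimes n}\big|_{V_S^{\otimes n}}=\sum_{i=1}^m \|M_i\|^2\bigg(\frac{M_i^S}{\|M_i\|}\bigg)^{\otimes n}\geq 0,
\]
which is precisely Inequality \eqref{eq:n positivity locolization criterion}.

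The argument is little more than bookkeeping once Theorem \ref{thm:Higher Positivity Criterion} is in hand, so I do not anticipate a genuine obstacle. The one point that deserves care is that $M_i^S$ must be taken as a \emph{principal} submatrix — the same index set $S$ for rows and columns — so that it really equals the compression $PM_iP|_{V_S}$ and behaves correctly under tensor powers; a rectangular submatrix with distinct row and column index sets would not inherit positivity, and the tensor-power identity above would fail. It is also worth recording, for the applications in the sequel, that by Frobenius reciprocity $N_{i,j}^k=N_{j,k^*}^{i^*}$, so the entries of the matrices $M_i^S$ appearing in \eqref{eq:n positivity locolization criterion} are determined by the fusion rules among the basis elements indexed by $S$ alone; this is exactly what makes the criterion applicable when only sparse fusion data are known.
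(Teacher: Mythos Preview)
Your proof is correct and follows exactly the paper's approach: compress the primary $n$-matrix $T_n$ (positive by Theorem~\ref{thm:Higher Positivity Criterion}) by the projection $P_S^{\otimes n}$ onto the coordinate subspace $V_S^{\otimes n}$, and identify the result with $\sum_i \|M_i\|^2 (M_i^S/\|M_i\|)^{\otimes n}$. The paper's proof is the same one-line computation $P_S^{\otimes n} T_n P_S^{*\otimes n}\geq 0$; your additional remarks on principal submatrices and Frobenius reciprocity are correct but not needed for the argument itself.
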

\begin{proof}
Let $P_S$: $\mathbb{C}^m\to\mathbb{C}^{|S|}$ be the projection.
By Inequality \eqref{eq:n positivity criterion}, we have
\begin{align*}
\sum_{i=1}^m \|M_i\|^2\bigg(\frac{M_i^S}{\|M_i\|}\bigg)^{\otimes n}=P_S^{\otimes n}\sum_{i=1}^m \|M_i\|^2\bigg(\frac{M_i}{\|M_i\|}\bigg)^{\otimes n} P_S^{*\otimes n}
\geq0.
\end{align*}
This completes the proof of the theorem.
\end{proof}
\begin{definition}
For any $n \geq 1$ and $S\subseteq\{1,2,\ldots,m\}$, we call Inequality \eqref{eq:n positivity locolization criterion} \textbf{$S$-localized $n$-criterion} of unitary categorification of fusion rings.
\end{definition}
\begin{definition}
For any $n \geq 1$ and $S\subseteq\{1,2,\ldots,m\}$, we call
\begin{align}\label{eq:positivity localization criterion matrix}
T_n^S(\mathcal{R})=\sum_{i=1}^m \|M_i\|^2\bigg(\frac{M_i^S}{\|M_i\|}\bigg)^{\otimes n}
\end{align}
primary \textbf{$(S,n)$-matrix} of $\mathcal{R}$. 
For simplicity, we use $T_n^S$ instead of $T_n^S(\mathcal{R})$ if there is no confusion.
\end{definition}
\subsection{$\mathcal{K}_7$ Revisited and Perturbations} \label{sub:S7}
We denote by $\mathcal{K}_7$ the simple integral fusion ring of rank $7$, FPdim $210$, type [[1, 1], [5, 3], [6, 1], [7, 2]], with
fusion matrices (See page 54 in \cite{LPW21}):
\begin{align*}
\begin{smallmatrix}
1&0&0&0&0&0&0\\
0&1&0&0&0&0&0\\
0&0&1&0&0&0&0\\
0&0&0&1&0&0&0\\
0&0&0&0&1&0&0\\
0&0&0&0&0&1&0\\
0&0&0&0&0&0&1
\end{smallmatrix},
\begin{smallmatrix}
0&1&0&0&0&0&0\\
1&1&0&1&0&1&1\\
0&0&1&0&1&1&1\\
0&1&0&0&1&1&1\\
0&0&1&1&1&1&1\\
0&1&1&1&1&1&1\\
0&1&1&1&1&1&1
\end{smallmatrix},
\begin{smallmatrix}
0&0&1&0&0&0&0\\
0&0&1&0&1&1&1\\
1&1&1&0&0&1&1\\
0&0&0&1&1&1&1\\
0&1&0&1&1&1&1\\
0&1&1&1&1&1&1\\
0&1&1&1&1&1&1
\end{smallmatrix},
\begin{smallmatrix}
0&0&0&1&0&0&0\\
0&1&0&0&1&1&1\\
0&0&0&1&1&1&1\\
1&0&1&1&0&1&1\\
0&1&1&0&1&1&1\\
0&1&1&1&1&1&1\\
0&1&1&1&1&1&1
\end{smallmatrix},
\begin{smallmatrix}
0&0&0&0&1&0&0\\
0&0&1&1&1&1&1\\
0&1&0&1&1&1&1\\
0&1&1&0&1&1&1\\
1&1&1&1&1&1&1\\
0&1&1&1&1&2&1\\
0&1&1&1&1&1&2
\end{smallmatrix},
\begin{smallmatrix}
0&0&0&0&0&1&0\\
0&1&1&1&1&1&1\\
0&1&1&1&1&1&1\\
0&1&1&1&1&1&1\\
0&1&1&1&1&2&1\\
1&1&1&1&2&0&3\\
0&1&1&1&1&3&1
\end{smallmatrix},
\begin{smallmatrix}
0&0&0&0&0&0&1\\
0&1&1&1&1&1&1\\
0&1&1&1&1&1&1\\
0&1&1&1&1&1&1\\
0&1&1&1&1&1&2\\
0&1&1&1&1&3&1\\
1&1&1&1&2&1&2
\end{smallmatrix}
\end{align*}
With the character table of these fusion matrices, $\mathcal{K}_7$ is  excluded from unitary categorification
by the Schur product criterion  in \cite{LPW21}. 

In this subsection, we exclude $\mathcal{K}_7$ again
 by the localized criterion with a local set with only two points. In this way, the computations are much simpler.
 Let $S=\{x_6,x_7\}$.
Then
\begin{align*}
M_1^S=\begin{pmatrix}
1&0\\0&1
\end{pmatrix},M_2^S=M_3^S=M_4^S=\begin{pmatrix}
1&1\\1&1
\end{pmatrix},M_5^S=\begin{pmatrix}
2&1\\1&2
\end{pmatrix},M_6^S=\begin{pmatrix}
0&3\\3&1
\end{pmatrix},M_7^S=\begin{pmatrix}
3&1\\1&2
\end{pmatrix}.
\end{align*}
The primary $(S,3)$-matrix $T_3^S$ of $\mathcal{K}_7$ is
\begin{align*}
\frac{1}{210}\begin{pmatrix}
1426&  536  & 536 &  286&   536 &  286 &  286&  1001\\
  536&  1156  & 286 &  446 &  286 &  446 & 1001 &  526\\
  536 &  286&  1156 &  446 &  286&  1001  & 446 &  526\\
  286&   446  & 446 &  976&  1001 &  526 &  526 &  476\\
  536&   286&   286&  1001&  1156 &  446  & 446 &  526\\
 286&   446&  1001 &  526 &  446 &  976  & 526 &  476\\
  286&  1001&   446 &  526 &  446 &  526 &  976 &  476\\
 1001&   526 &  526&   476 &  526&   476  & 476 &  886
\end{pmatrix}.
\end{align*}
It is not positive since it has a negative eigenvalue $\simeq
-0.62949$ (see Appendix \ref{App:S7}). So $\mathcal{K}_7$ does not admit a unitary categorification by Theorem \ref{thm:localization criterion}.
\subsubsection{Perturbation of Frobenius-Perron dimensions}\label{sec:fpdim}
Localized criteria allow us to do some perturbations of the Frobenius-Perron dimensions. We  obtain a bound of FPdim, which provides
 a sufficient condition for a fusion ring's exclusion from unitary categorification. This helps us to exclude another nine non-integral fusion rings  from unitary categorification.

Suppose that $\mathcal{R}$ is a fusion ring with basis $\{I=x_1,\cdots,x_7\}$ and $M_i$ are fusion matrices  such that
\begin{align*}
M_1^S=\begin{pmatrix}
1&0\\0&1
\end{pmatrix},M_2^S=M_3^S=M_4^S=\begin{pmatrix}
1&1\\1&1
\end{pmatrix},M_5^S=\begin{pmatrix}
2&1\\1&2
\end{pmatrix},M_6^S=\begin{pmatrix}
0&3\\3&1
\end{pmatrix},M_7^S=\begin{pmatrix}
3&1\\1&2
\end{pmatrix},
\end{align*}
where $S=\{x_6,x_7\}$.
We assume that $d_j$ is the Frobenius-Perron dimension and they are variables here.
Let 
\begin{align*}
T_3^S= \sum_{i=1}^7 \frac{1}{d_i}(M^S_i)^{\otimes 3}
\end{align*}
where $d_i \ge 1$ and $d_1 = 1$. 
The exclusion of fusion ring $\mathcal{K}_7$ can be seen as the application of the following proposition to $(d_1, \dots, d_7) = (1,5,5,5,6,7,7)$.

\begin{proposition} \label{prop:1}
The matrix $T_3^S$ has a negative eigenvalue if
 $$d_6 < \max_{i=1,2} Q_{i,+}(d_2,d_3,d_4,d_5,d_7),$$ 
 where $\displaystyle Q_{i,\pm} = \frac{-B_i \pm \sqrt{\Delta_i}}{2A_i}$, $\Delta_i = B_i^2-4A_iC_i$, and
\begin{align*}
A_1 =& d_5^2d_7^2 + 25d_5^2d_7 + 12d_5d_7^2 + 125d_5^2 + 120d_5d_7 + 27d_7^2,\\
B_1 =& - 9(d_5d_7 - 15d_5 - 12d_7)d_5d_7,\\
C_1 =& - 729d_5^2d_7^2,\\
A_2 =& d_2d_3d_4d_5^3d_7^3 + 67d_2d_3d_4d_5^3d_7^2 + 32d_2d_3d_4d_5^2d_7^3 + 7d_2d_3d_5^3d_7^3 + 7d_2d_4d_5^3d_7^3 + 7d_3d_4d_5^3d_7^3 \\
+& 1025d_2d_3d_4d_5^3d_7
+962d_2d_3d_4d_5^2d_7^2
+172d_2d_3d_5^3d_7^2 + 172d_2d_4d_5^3d_7^2 + 172d_3d_4d_5^3d_7^2\\
 +& 195d_2d_3d_4d_5d_7^3
+ 54d_2d_3d_5^2d_7^3 + 54d_2d_4d_5^2d_7^3 + 54d_3d_4d_5^2d_7^3 + 3375d_2d_3d_4d_5^3
+ 4600d_2d_3d_4d_5^2d_7 \\
+ &665d_2d_3d_5^3d_7 + 665d_2d_4d_5^3d_7
+ 665d_3d_4d_5^3d_7 + 1833d_2d_3d_4d_5d_7^2 + 432d_2d_3d_5^2d_7^2 + 432d_2d_4d_5^2d_7^2 \\
+& 432d_3d_4d_5^2d_7^2 + 216d_2d_3d_4d_7^3 + 63d_2d_3d_5d_7^3 + 63d_2d_4d_5d_7^3 + 63d_3d_4d_5d_7^3,\\
B_2 =& 18(d_2d_3d_4d_5^2d_7^2 + 12d_2d_3d_4d_5^2d_7 - 7d_2d_3d_4d_5d_7^2 - 5d_2d_3d_5^2d_7^2 - 5d_2d_4d_5^2d_7^2 - 5d_3d_4d_5^2d_7^2 \\
-& 405d_2d_3d_4d_5^2 - 471d_2d_3d_4d_5d_7
- 123d_2d_3d_5^2d_7 - 123d_2d_4d_5^2d_7 - 123d_3d_4d_5^2d_7 - 96d_2d_3d_4d_7^2\\
 -& 33d_2d_3d_5d_7^2
- 33d_2d_4d_5d_7^2 - 33d_3d_4d_5d_7^2)d_5d_7,\\
C_2 =& -729(d_2d_3d_4d_5d_7 + 27d_2d_3d_4d_5 + 8d_2d_3d_4d_7 + d_2d_3d_5d_7 + d_2d_4d_5d_7 + d_3d_4d_5d_7)d_5^2d_7^2.
\end{align*}
\end{proposition}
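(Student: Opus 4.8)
The plan is to produce the negative eigenvalue by compressing $T_3^S$ to a cleverly chosen $2$-dimensional subspace. The fact I would use is purely linear-algebraic: for any subspace $W\subseteq\mathbb{C}^8$ with orthogonal projection $P_W$, a negative eigenvalue of the compression $P_WT_3^SP_W$ forces a negative eigenvalue of $T_3^S$; and since $T_3^S$ (hence any compression) is real symmetric, a compression has a negative eigenvalue whenever its determinant is negative, because the product of its real eigenvalues is then negative. So it suffices to exhibit, for $d_6$ in the stated range, a $2$-dimensional $W$ with $\det(P_WT_3^SP_W)<0$; I would use two such subspaces $W_1,W_2$, giving the two conditions indexed by $i=1,2$.

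The choice of subspaces should exploit that $(M_2^S)^{\otimes 3}=(M_3^S)^{\otimes 3}=(M_4^S)^{\otimes 3}=\mathbf 1\mathbf 1^{\mathrm t}$ is the rank-one all-ones matrix, with kernel the hyperplane $\mathbf 1^{\perp}$. I would pick $W_1\subseteq\mathbf 1^{\perp}$, so that the terms $i=2,3,4$ contribute nothing to the $W_1$-compression and the associated data $A_1,B_1,C_1$ depend only on $d_5$ and $d_7$ (besides $d_1=1$ and $d_6$). I would pick $W_2$ to meet the line $\mathbb C\mathbf 1$: the all-ones term then reappears, but because $\mathbf 1\mathbf 1^{\mathrm t}$ compresses to a rank-one matrix, the dependence of the $W_2$-compression on $d_2,d_3,d_4$ enters only through $\sigma:=d_2^{-1}+d_3^{-1}+d_4^{-1}$, and only linearly — which is exactly why $A_2,B_2,C_2$ have degree at most one in each of $d_2,d_3,d_4$.

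In a convenient basis of $W_i$, the entries of the compression are real affine combinations of $1,d_5^{-1},d_6^{-1},d_7^{-1}$ (and, for $i=2$, of $\sigma$), with numerical coefficients read off from the $M_j^S$; its determinant is thus a rational function in the $d_j$ whose denominator is a monomial. Multiplying by $d_6^{2}$ and by a suitable positive polynomial in the remaining variables clears all fractions and produces exactly $A_i(d)\,d_6^{2}+B_i(d)\,d_6+C_i(d)$, so that $\det(P_{W_i}T_3^SP_{W_i})<0$ if and only if $A_id_6^{2}+B_id_6+C_i<0$. Expanding the two determinants into the displayed polynomials is the laborious but mechanical part, naturally carried out by computer algebra, and it is here that the precise $W_1,W_2$ must be pinned down so that everything comes out in the stated closed form.

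It then remains to read off signs. From the displayed formulas $A_i$ is a polynomial with strictly positive coefficients, so $A_i>0$, and $-C_i$ likewise has strictly positive coefficients, so $C_i<0$; hence $\Delta_i=B_i^2-4A_iC_i>B_i^2\ge 0$, the roots $Q_{i,\pm}=(-B_i\pm\sqrt{\Delta_i})/(2A_i)$ are real, and $Q_{i,+}Q_{i,-}=C_i/A_i<0$ gives $Q_{i,-}<0<Q_{i,+}$. Since $A_i>0$, the upward parabola $t\mapsto A_it^2+B_it+C_i$ is negative precisely on $(Q_{i,-},Q_{i,+})$; and since $Q_{i,-}<0<1\le d_6$, this is the same as $d_6<Q_{i,+}$. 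Therefore, if $d_6<\max_{i=1,2}Q_{i,+}$ then $d_6<Q_{i^{*},+}$ for some $i^{*}$, the compression $P_{W_{i^{*}}}T_3^SP_{W_{i^{*}}}$ fails to be positive semidefinite, and $T_3^S$ has a negative eigenvalue. The main obstacle is locating $W_1$ and $W_2$ — making $W_1$ genuinely free of $d_2,d_3,d_4$ and arranging that the union of the intervals $(0,Q_{1,+})$ and $(0,Q_{2,+})$ covers the cases one wants (in particular $d_6<Q_{1,+}$ should already recover the exclusion of $\mathcal K_7$, where $(d_1,\dots,d_7)=(1,5,5,5,6,7,7)$); once they are fixed, the determinant computation is finite and the sign argument above is immediate.
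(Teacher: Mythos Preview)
Your overall strategy --- compress $T_3^S$ to a small subspace and detect negativity via a determinant --- is sound, and your sign analysis of the quadratics $A_i d_6^2 + B_i d_6 + C_i$ (showing $A_i>0$, $C_i<0$, hence $Q_{i,-}<0<Q_{i,+}$) matches the paper's closing step. But your route to those quadratics is both different from the paper's and incomplete.

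The paper does not use $2$-dimensional compressions. It takes the $7\times 7$ top-left corner $N$ of $T_3^S$ and computes, via computer algebra, that
\[
\det(N)=\frac{(A_1 d_6^2+B_1 d_6+C_1)^2\,(A_2 d_6^2+B_2 d_6+C_2)}{d_2 d_3 d_4 d_5^7 d_6^6 d_7^7}.
\]
Since the first factor is squared, the sign of $\det(N)$ equals that of $A_2 d_6^2+B_2 d_6+C_2$; a negative determinant forces a negative eigenvalue of $N$, hence of $T_3^S$, giving the $i=2$ branch. For $i=1$ the determinant is useless (the relevant factor is squared), so the paper instead exhibits an explicit symbolic eigenvalue of $N$ --- one that does not depend on $d_2,d_3,d_4$ --- and checks that it is negative precisely on $(Q_{1,-},Q_{1,+})$.

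Your proposal, by contrast, never pins down $W_1$ or $W_2$; you acknowledge this is ``the main obstacle,'' and it is the genuine gap. A generic $2$-dimensional compression will produce \emph{some} quadratic in $d_6$, but there is no argument that it yields the specific $A_i,B_i,C_i$ of the statement, and you offer no mechanism for choosing $W_i$ beyond the (correct) observation that $W_1\subseteq\mathbf 1^{\perp}$ kills the $d_2,d_3,d_4$ dependence. The paper's determinant factorization and its $d_2,d_3,d_4$-free eigenvalue do hint that suitable low-dimensional invariant pieces of $N$ exist, but locating them is real work you have not done.

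One factual slip: you expect $d_6<Q_{1,+}$ already to exclude $\mathcal K_7$. It does not. For $(d_2,\dots,d_7)=(5,5,5,6,7,7)$ one has $Q_{1,+}\approx 6.54<7$ while $Q_{2,+}\approx 8.88>7$, so $\mathcal K_7$ is caught by the $i=2$ branch only.
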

\begin{proof}
If a corner of $T_3^S$ has a negative eigenvalue then so is $T_3^S$. Let $N$ be the $7 \times 7$ top left corner of $T_3^S$. By Lemma \ref{lem:1} and the fact that $d_i \ge 1$ for all $i$, the sign of $\det(N)$ is the sign of $A_2d_6^2 + B_2d_6 + C_2$, so if $Q_{k,-}<d_6<Q_{k,+}$, with $k=2$, then $N$ has a negative eigenvalue. But Lemma \ref{lem:1} states also that for $k=1$. Finally, observe that $Q_{k,-} < 0$ for $k=1,2$. The result follows.
\end{proof}

\begin{lemma} \label{lem:1} Let $N$ be the $7 \times 7$ top left corner of $T_3^S$. Then
$$\det(N) = \frac{(A_1d_6^2 + B_1d_6 + C_1)^2(A_2d_6^2 + B_2d_6 + C_2)}{d_2d_3d_4d_5^7d_6^6d_7^7},$$
with the notations of Proposition \ref{prop:1}.
Moreover, $N$ has an eigenvalue which is negative for $Q_{1,-}<d_6<Q_{1,+}$.

\end{lemma}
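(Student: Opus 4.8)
The plan is to reduce the lemma to one large symbolic determinant computation together with an elementary eigenvalue-interlacing argument. First I would write $T_3^S=\sum_{i=1}^7 d_i^{-1}(M_i^S)^{\otimes 3}$ out as an $8\times 8$ matrix over $\mathbb{Q}(d_2,\dots,d_7)$ (recall $\|M_i\|=d_i$ and $d_1=1$). Since $M_2^S=M_3^S=M_4^S$ is the rank-one all-ones $2\times 2$ matrix, it is convenient to collect their contribution into the scalar $s:=d_2^{-1}+d_3^{-1}+d_4^{-1}$, so that
\[
T_3^S = I_8 + s\,J_8 + d_5^{-1}(M_5^S)^{\otimes 3} + d_6^{-1}(M_6^S)^{\otimes 3} + d_7^{-1}(M_7^S)^{\otimes 3},
\]
with $J_8$ the all-ones $8\times 8$ matrix. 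Deleting the last row and column yields $N = N_0 + s\,J_7$, where $J_7$ is the all-ones $7\times 7$ matrix and $N_0 = I_7 + d_5^{-1}(\cdot)+d_6^{-1}(\cdot)+d_7^{-1}(\cdot)$ is the $s$-free part; the entries of $N$ are rational in $d_2,\dots,d_7$ with common denominator $d_2d_3d_4d_5d_6d_7$.

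Second, I would compute $\det N$ with a computer algebra system and clear denominators, expecting that $d_2d_3d_4d_5^7d_6^6d_7^7\det N$ is a polynomial in $d_2,\dots,d_7$ which, viewed as a polynomial of degree $4$ in $d_6$, equals $(A_1d_6^2+B_1d_6+C_1)^2(A_2d_6^2+B_2d_6+C_2)$ with the coefficients displayed in Proposition \ref{prop:1}. To confirm the factorization one can extract the repeated quadratic as $\gcd$ of the polynomial and its $d_6$-derivative, divide it out, and match the two resulting quadratics against the explicit $A_i,B_i,C_i$; equivalently, one expands the claimed product and compares coefficients of the monomials in $d_2,\dots,d_7$. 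A built-in consistency check: by the matrix-determinant lemma, $\det N = \det N_0 + s\,\mathbf 1^{\mathrm T}\mathrm{adj}(N_0)\mathbf 1$ is affine-linear in $s$, and indeed $A_1,B_1,C_1$ are free of $d_2,d_3,d_4$ while $(A_2d_6^2+B_2d_6+C_2)/(d_2d_3d_4)$ is affine-linear in $d_2^{-1}+d_3^{-1}+d_4^{-1}$, so the stated right-hand side is affine-linear in $s$ as well.

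Third, for the ``moreover'' assertion I would restrict to the hyperplane $V=\mathbf 1^{\perp}\subset\mathbb{C}^7$ (here $\mathbf 1=(1,\dots,1)^{\mathrm T}$): for $v,w\in V$ one has $v^{\mathrm T}(sJ_7)w=0$, so the compression of $N$ to $V$ coincides with that of $N_0$ and depends only on $d_5,d_6,d_7$ — matching the fact that $A_1,B_1,C_1$ involve no $d_2,d_3,d_4$. Inside this compression I would produce an explicit principal submatrix (equivalently, an explicit vector $w\perp\mathbf 1$) whose determinant, respectively whose value $w^{\mathrm T}Nw$, equals $A_1d_6^2+B_1d_6+C_1$ up to a positive monomial factor in $d_5,d_6,d_7$. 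All coefficients of $A_1$ are positive, so $A_1>0$ for $d_5,d_7\ge 1$, hence $A_1d_6^2+B_1d_6+C_1$ is an upward-opening parabola in $d_6$ with roots exactly $Q_{1,\pm}$, strictly negative on $(Q_{1,-},Q_{1,+})$; on that interval the chosen submatrix is not positive semidefinite, so by Cauchy interlacing $N$ has a negative eigenvalue there, which is the claim.

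The main obstacle is the first computation: $N$ is a $7\times 7$ matrix of rational functions in six variables, so the expanded numerator of $\det N$ is a large multivariate polynomial, and one must both recognize its perfect-square structure and reconcile it with the explicit formulas for $A_i,B_i,C_i$ — routine for a CAS but requiring care with the denominator normalization. The only other point needing thought is pinning down, in the third step, the submatrix (or vector) that realizes $A_1d_6^2+B_1d_6+C_1$; the reduction to $\mathbf 1^{\perp}$ localizes this to a finite search within that hyperplane and makes it mechanical once the entries of $N_0$ are written down.
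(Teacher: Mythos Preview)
Your plan for the determinant is exactly what the paper does: it is a one-line SageMath computation (Appendix~\ref{sub:1}) that forms $T_3^S$, takes the $7\times7$ corner, and calls \texttt{.determinant().factor()}, which returns precisely the stated product over the stated monomial. Your extra structural remarks (grouping $d_2,d_3,d_4$ into $s=d_2^{-1}+d_3^{-1}+d_4^{-1}$ and the matrix--determinant--lemma check that $\det N$ is affine in $s$) are not in the paper but are correct and provide a nice sanity check on the displayed $A_i,B_i,C_i$.

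For the ``moreover'' clause the paper takes a more direct route than you do: SageMath computes the eigenvalues of $N$ symbolically, and one of them,
\[
E=\frac{2d_5d_6d_7+25d_5d_6-9d_5d_7+12d_6d_7-\sqrt{D}}{2d_5d_6d_7},
\]
is visibly free of $d_2,d_3,d_4$; the paper then checks (again by CAS) that the zero locus of $E$ in $d_6$ coincides with that of $A_1d_6^2+B_1d_6+C_1$, so $E<0$ exactly for $Q_{1,-}<d_6<Q_{1,+}$. Your $\mathbf 1^{\perp}$-compression idea \emph{explains} why such an $s$-free eigenvalue exists, but your stated goal of finding a single fixed vector $w$ with $w^{\mathrm T}Nw$ equal to the quadratic up to a positive monomial is not quite right: for a constant $w\in\mathbf 1^{\perp}$ the quantity $w^{\mathrm T}Nw$ has the shape $a+b/d_5+c/d_6+e/d_7$, which cannot match $A_1d_6^2+B_1d_6+C_1$ (note the $\sqrt{\,\cdot\,}$ in $E$). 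What does work is your alternative with a $2\times 2$ block: the $s$-free eigenvalue $E$ and its conjugate $(\,\cdot\,+\sqrt{D})/(2d_5d_6d_7)$ span a $2$-dimensional $N_0$-invariant subspace of $\mathbf 1^{\perp}$, and on that block the determinant is (up to a positive monomial) $A_1d_6^2+B_1d_6+C_1$ while the trace is positive, so one eigenvalue is negative precisely on $(Q_{1,-},Q_{1,+})$. Either way the conclusion follows; the paper's approach is shorter given a CAS that will hand you the eigenvalues in closed form, while yours isolates the mechanism behind the $d_2,d_3,d_4$-independence.
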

\begin{proof} See the SageMath computation in Appendix \ref{sub:1}.
\end{proof}

\begin{remark}
Observe that $\displaystyle Q_{1,+} < \frac{9}{2}(1+\sqrt{37}) = 31.87243\dots$ and $Q_{2,+} < 9(\sqrt{10}-1) = 19.4604989\dots $.
\end{remark}

\begin{remark}
Note that $\dim = [1,5,5,5,6,7,7]$ is covered by $Q_{2,+}$ only, because $Q_{2,+}(5,5,5,6,7) = 8.882676\dots > 7$, whereas $Q_{1,+}(5,5,5,6,7) = 6.537671\dots < 7$.
\end{remark}

\begin{remark} \label{rk:1}
The converse of Proposition \ref{prop:1} is false, because for $(d_2,d_3,d_4,d_5,d_7) = (5,5,5,6,7)$, the matrix $T_3^S$ has a negative eigenvalue if $d_6<\alpha = 9.5934\dots$, see Appendix \ref{sub:2}.
\end{remark}

\begin{remark} \label{rk:2}
To show how an ultimate update of Proposition \ref{prop:1} should be complex to state, note that $\alpha$ (in Remark \ref{rk:1}) is a root of an irreducible polynomial of degree $4$ in $\mathbb{Z}[X]$. It can be written as follows (see Appendix \ref{sub:3}):
$$\alpha = \frac{B}{a}+ \frac{1}{2}\sqrt{-A - i/j/A - k/b/B+ l/b} + \frac{m}{n}$$
where
$$
A=\left(\frac{f + c\sqrt{e}I}{d}\right)^{1/3}, \ B=\sqrt{bA + g/A + h},$$
and
\begin{align*}
a&=421113102, \
b=44334061169015601, \
c=250462485504, \
d=345734333761887148583413,\\
e&=5213988190704773354123819324759655, \
f=64964979666121194605838007808, \\
g&=149108026701745210176, \
h=3010843992410706004, \
i=16567558522416134464,\\
j&=4926006796557289, \
k=1151178558485955841275891856, \
l=6021687984821412008,\\
m&=241694327, \
n=210556551.
\end{align*}
In fact, $\alpha = Q(5,5,5,6,7)$ where $Q$ is an algebraic function requiring about one million characters (or about 500 pages) to be written (at least before any simplification). A laptop needed about 24 hours to compute it. Of course, we can compute $\alpha$ directly, see Appendix. In addition, it seems that $Q < \beta = \frac{9}{2}(1+\sqrt{37})$. So if $d_6 \ge \beta$ then $T_3^S$ should not have a negative eigenvalue.
\end{remark}

A natural question is that whether we could exclude fusion rings by the restriction condition on PFdim in Proposition \ref{prop:1}.
The answer is yes.
 With the help of computers, we find another
 9 simple (non-integral) fusion rings of rank 7 and multiplicity $\leq$ 6, having the same local data for $S=\{x_6,x_7\}$, as above, and whose exclusion from unitary categorification is covered by Proposition \ref{prop:1}.
\begin{align*}
\begin{smallmatrix}
1&0&0&0&0&0&0\\
0&1&0&0&0&0&0\\
0&0&1&0&0&0&0\\
0&0&0&1&0&0&0\\
0&0&0&0&1&0&0\\
0&0&0&0&0&1&0\\
0&0&0&0&0&0&1
\end{smallmatrix},
\begin{smallmatrix}
0&1&0&0&0&0&0\\
1&1&1&0&0&1&1\\
0&1&0&0&1&1&1\\
0&0&0&1&1&1&1\\
0&0&1&1&1&1&1\\
0&1&1&1&1&1&1\\
0&1&1&1&1&1&1
\end{smallmatrix},
\begin{smallmatrix}
0&0&1&0&0&0&0\\
0&1&0&0&1&1&1\\
1&0&1&1&0&1&1\\
0&0&1&0&1&1&1\\
0&1&0&1&1&1&1\\
0&1&1&1&1&1&1\\
0&1&1&1&1&1&1
\end{smallmatrix},
\begin{smallmatrix}
0&0&0&1&0&0&0\\
0&0&0&1&1&1&1\\
0&0&1&0&1&1&1\\
1&1&0&1&0&1&1\\
0&1&1&0&1&1&1\\
0&1&1&1&1&1&1\\
0&1&1&1&1&1&1
\end{smallmatrix},
\begin{smallmatrix}
0&0&0&0&1&0&0\\
0&0&1&1&1&1&1\\
0&1&0&1&1&1&1\\
0&1&1&0&1&1&1\\
1&1&1&1&1&1&1\\
0&1&1&1&1&2&1\\
0&1&1&1&1&1&2
\end{smallmatrix},
\begin{smallmatrix}
0&0&0&0&0&1&0\\
0&1&1&1&1&1&1\\
0&1&1&1&1&1&1\\
0&1&1&1&1&1&1\\
0&1&1&1&1&2&1\\
1&1&1&1&2&0&3\\
0&1&1&1&1&3&1
\end{smallmatrix},
\begin{smallmatrix}
0&0&0&0&0&0&1\\
0&1&1&1&1&1&1\\
0&1&1&1&1&1&1\\
0&1&1&1&1&1&1\\
0&1&1&1&1&1&2\\
0&1&1&1&1&3&1\\
1&1&1&1&2&1&2
\end{smallmatrix}
\end{align*}
\begin{align*}
\begin{smallmatrix}
1&0&0&0&0&0&0\\
0&1&0&0&0&0&0\\
0&0&1&0&0&0&0\\
0&0&0&1&0&0&0\\
0&0&0&0&1&0&0\\
0&0&0&0&0&1&0\\
0&0&0&0&0&0&1
\end{smallmatrix},
\begin{smallmatrix}
0&1&0&0&0&0&0\\
1&0&0&0&1&2&2\\
0&0&1&1&1&1&1\\
0&0&1&1&1&1&1\\
0&1&1&1&1&1&1\\
0&2&1&1&1&1&1\\
0&2&1&1&1&1&1
\end{smallmatrix},
\begin{smallmatrix}
0&0&1&0&0&0&0\\
0&0&1&1&1&1&1\\
1&1&1&0&0&1&1\\
0&1&0&0&1&1&1\\
0&1&0&1&1&1&1\\
0&1&1&1&1&1&1\\
0&1&1&1&1&1&1
\end{smallmatrix},
\begin{smallmatrix}
0&0&0&1&0&0&0\\
0&0&1&1&1&1&1\\
0&1&0&0&1&1&1\\
1&1&0&1&0&1&1\\
0&1&1&0&1&1&1\\
0&1&1&1&1&1&1\\
0&1&1&1&1&1&1
\end{smallmatrix},
\begin{smallmatrix}
0&0&0&0&1&0&0\\
0&1&1&1&1&1&1\\
0&1&0&1&1&1&1\\
0&1&1&0&1&1&1\\
1&1&1&1&1&1&1\\
0&1&1&1&1&2&1\\
0&1&1&1&1&1&2
\end{smallmatrix},
\begin{smallmatrix}
0&0&0&0&0&1&0\\
0&2&1&1&1&1&1\\
0&1&1&1&1&1&1\\
0&1&1&1&1&1&1\\
0&1&1&1&1&2&1\\
1&1&1&1&2&0&3\\
0&1&1&1&1&3&1
\end{smallmatrix},
\begin{smallmatrix}
0&0&0&0&0&0&1\\
0&2&1&1&1&1&1\\
0&1&1&1&1&1&1\\
0&1&1&1&1&1&1\\
0&1&1&1&1&1&2\\
0&1&1&1&1&3&1\\
1&1&1&1&2&1&2
\end{smallmatrix}
\end{align*}
\begin{align*}
\begin{smallmatrix}
1&0&0&0&0&0&0\\
0&1&0&0&0&0&0\\
0&0&1&0&0&0&0\\
0&0&0&1&0&0&0\\
0&0&0&0&1&0&0\\
0&0&0&0&0&1&0\\
0&0&0&0&0&0&1
\end{smallmatrix},
\begin{smallmatrix}
0&1&0&0&0&0&0\\
1&1&1&1&2&3&3\\
0&1&1&2&1&1&1\\
0&1&2&1&2&2&2\\
0&2&1&2&1&1&1\\
0&3&1&2&1&1&1\\
0&3&1&2&1&1&1
\end{smallmatrix},
\begin{smallmatrix}
0&0&1&0&0&0&0\\
0&1&1&2&1&1&1\\
1&1&0&1&0&1&1\\
0&2&1&0&1&1&1\\
0&1&0&1&1&1&1\\
0&1&1&1&1&1&1\\
0&1&1&1&1&1&1
\end{smallmatrix},
\begin{smallmatrix}
0&0&0&1&0&0&0\\
0&1&2&1&2&2&2\\
0&2&1&0&1&1&1\\
1&1&0&2&1&2&2\\
0&2&1&1&1&1&1\\
0&2&1&2&1&1&1\\
0&2&1&2&1&1&1
\end{smallmatrix},
\begin{smallmatrix}
0&0&0&0&1&0&0\\
0&2&1&2&1&1&1\\
0&1&0&1&1&1&1\\
0&2&1&1&1&1&1\\
1&1&1&1&1&1&1\\
0&1&1&1&1&2&1\\
0&1&1&1&1&1&2
\end{smallmatrix},
\begin{smallmatrix}
0&0&0&0&0&1&0\\
0&3&1&2&1&1&1\\
0&1&1&1&1&1&1\\
0&2&1&2&1&1&1\\
0&1&1&1&1&2&1\\
1&1&1&1&2&0&3\\
0&1&1&1&1&3&1
\end{smallmatrix},
\begin{smallmatrix}
0&0&0&0&0&0&1\\
0&3&1&2&1&1&1\\
0&1&1&1&1&1&1\\
0&2&1&2&1&1&1\\
0&1&1&1&1&1&2\\
0&1&1&1&1&3&1\\
1&1&1&1&2&1&2
\end{smallmatrix}
\end{align*}
\begin{align*}
\begin{smallmatrix}
1&0&0&0&0&0&0\\
0&1&0&0&0&0&0\\
0&0&1&0&0&0&0\\
0&0&0&1&0&0&0\\
0&0&0&0&1&0&0\\
0&0&0&0&0&1&0\\
0&0&0&0&0&0&1
\end{smallmatrix},
\begin{smallmatrix}
0&1&0&0&0&0&0\\
1&4&2&4&3&4&4\\
0&2&1&3&1&1&1\\
0&4&3&3&3&3&3\\
0&3&1&3&1&1&1\\
0&4&1&3&1&1&1\\
0&4&1&3&1&1&1
\end{smallmatrix},
\begin{smallmatrix}
0&0&1&0&0&0&0\\
0&2&1&3&1&1&1\\
1&1&0&1&0&1&1\\
0&3&1&1&1&1&1\\
0&1&0&1&1&1&1\\
0&1&1&1&1&1&1\\
0&1&1&1&1&1&1
\end{smallmatrix},
\begin{smallmatrix}
0&0&0&1&0&0&0\\
0&4&3&3&3&3&3\\
0&3&1&1&1&1&1\\
1&3&1&4&2&3&3\\
0&3&1&2&1&1&1\\
0&3&1&3&1&1&1\\
0&3&1&3&1&1&1
\end{smallmatrix},
\begin{smallmatrix}
0&0&0&0&1&0&0\\
0&3&1&3&1&1&1\\
0&1&0&1&1&1&1\\
0&3&1&2&1&1&1\\
1&1&1&1&1&1&1\\
0&1&1&1&1&2&1\\
0&1&1&1&1&1&2
\end{smallmatrix},
\begin{smallmatrix}
0&0&0&0&0&1&0\\
0&4&1&3&1&1&1\\
0&1&1&1&1&1&1\\
0&3&1&3&1&1&1\\
0&1&1&1&1&2&1\\
1&1&1&1&2&0&3\\
0&1&1&1&1&3&1
\end{smallmatrix},
\begin{smallmatrix}
0&0&0&0&0&0&1\\
0&4&1&3&1&1&1\\
0&1&1&1&1&1&1\\
0&3&1&3&1&1&1\\
0&1&1&1&1&1&2\\
0&1&1&1&1&3&1\\
1&1&1&1&2&1&2
\end{smallmatrix}
\end{align*}
\begin{align*}
\begin{smallmatrix}
1&0&0&0&0&0&0\\
0&1&0&0&0&0&0\\
0&0&1&0&0&0&0\\
0&0&0&1&0&0&0\\
0&0&0&0&1&0&0\\
0&0&0&0&0&1&0\\
0&0&0&0&0&0&1
\end{smallmatrix},
\begin{smallmatrix}
0&1&0&0&0&0&0\\
1&1&1&0&0&1&1\\
0&1&3&1&1&1&1\\
0&0&1&0&1&1&1\\
0&0&1&1&1&1&1\\
0&1&1&1&1&1&1\\
0&1&1&1&1&1&1
\end{smallmatrix},
\begin{smallmatrix}
0&0&1&0&0&0&0\\
0&1&3&1&1&1&1\\
1&3&0&3&4&5&5\\
0&1&3&1&1&1&1\\
0&1&4&1&1&1&1\\
0&1&5&1&1&1&1\\
0&1&5&1&1&1&1
\end{smallmatrix},
\begin{smallmatrix}
0&0&0&1&0&0&0\\
0&0&1&0&1&1&1\\
0&1&3&1&1&1&1\\
1&0&1&1&0&1&1\\
0&1&1&0&1&1&1\\
0&1&1&1&1&1&1\\
0&1&1&1&1&1&1
\end{smallmatrix},
\begin{smallmatrix}
0&0&0&0&1&0&0\\
0&0&1&1&1&1&1\\
0&1&4&1&1&1&1\\
0&1&1&0&1&1&1\\
1&1&1&1&1&1&1\\
0&1&1&1&1&2&1\\
0&1&1&1&1&1&2
\end{smallmatrix},
\begin{smallmatrix}
0&0&0&0&0&1&0\\
0&1&1&1&1&1&1\\
0&1&5&1&1&1&1\\
0&1&1&1&1&1&1\\
0&1&1&1&1&2&1\\
1&1&1&1&2&0&3\\
0&1&1&1&1&3&1
\end{smallmatrix},
\begin{smallmatrix}
0&0&0&0&0&0&1\\
0&1&1&1&1&1&1\\
0&1&5&1&1&1&1\\
0&1&1&1&1&1&1\\
0&1&1&1&1&1&2\\
0&1&1&1&1&3&1\\
1&1&1&1&2&1&2
\end{smallmatrix}
\end{align*}
\begin{align*}
\begin{smallmatrix}
1&0&0&0&0&0&0\\
0&1&0&0&0&0&0\\
0&0&1&0&0&0&0\\
0&0&0&1&0&0&0\\
0&0&0&0&1&0&0\\
0&0&0&0&0&1&0\\
0&0&0&0&0&0&1
\end{smallmatrix},
\begin{smallmatrix}
0&1&0&0&0&0&0\\
1&0&1&1&0&1&1\\
0&1&3&2&1&1&1\\
0&1&2&0&1&1&1\\
0&0&1&1&1&1&1\\
0&1&1&1&1&1&1\\
0&1&1&1&1&1&1
\end{smallmatrix},
\begin{smallmatrix}
0&0&1&0&0&0&0\\
0&1&3&2&1&1&1\\
1&3&3&3&4&5&5\\
0&2&3&3&2&2&2\\
0&1&4&2&1&1&1\\
0&1&5&2&1&1&1\\
0&1&5&2&1&1&1
\end{smallmatrix},
\begin{smallmatrix}
0&0&0&1&0&0&0\\
0&1&2&0&1&1&1\\
0&2&3&3&2&2&2\\
1&0&3&0&1&2&2\\
0&1&2&1&1&1&1\\
0&1&2&2&1&1&1\\
0&1&2&2&1&1&1
\end{smallmatrix},
\begin{smallmatrix}
0&0&0&0&1&0&0\\
0&0&1&1&1&1&1\\
0&1&4&2&1&1&1\\
0&1&2&1&1&1&1\\
1&1&1&1&1&1&1\\
0&1&1&1&1&2&1\\
0&1&1&1&1&1&2
\end{smallmatrix},
\begin{smallmatrix}
0&0&0&0&0&1&0\\
0&1&1&1&1&1&1\\
0&1&5&2&1&1&1\\
0&1&2&2&1&1&1\\
0&1&1&1&1&2&1\\
1&1&1&1&2&0&3\\
0&1&1&1&1&3&1
\end{smallmatrix},
\begin{smallmatrix}
0&0&0&0&0&0&1\\
0&1&1&1&1&1&1\\
0&1&5&2&1&1&1\\
0&1&2&2&1&1&1\\
0&1&1&1&1&1&2\\
0&1&1&1&1&3&1\\
1&1&1&1&2&1&2
\end{smallmatrix}
\end{align*}
\begin{align*}
\begin{smallmatrix}
1&0&0&0&0&0&0\\
0&1&0&0&0&0&0\\
0&0&1&0&0&0&0\\
0&0&0&1&0&0&0\\
0&0&0&0&1&0&0\\
0&0&0&0&0&1&0\\
0&0&0&0&0&0&1
\end{smallmatrix},
\begin{smallmatrix}
0&1&0&0&0&0&0\\
1&5&5&3&3&4&4\\
0&5&3&4&3&3&3\\
0&3&4&0&2&2&2\\
0&3&3&2&1&1&1\\
0&4&3&2&1&1&1\\
0&4&3&2&1&1&1
\end{smallmatrix},
\begin{smallmatrix}
0&0&1&0&0&0&0\\
0&5&3&4&3&3&3\\
1&3&1&5&3&4&4\\
0&4&5&0&0&0&0\\
0&3&3&0&1&1&1\\
0&3&4&0&1&1&1\\
0&3&4&0&1&1&1
\end{smallmatrix},
\begin{smallmatrix}
0&0&0&1&0&0&0\\
0&3&4&0&2&2&2\\
0&4&5&0&0&0&0\\
1&0&0&2&2&3&3\\
0&2&0&2&1&1&1\\
0&2&0&3&1&1&1\\
0&2&0&3&1&1&1
\end{smallmatrix},
\begin{smallmatrix}
0&0&0&0&1&0&0\\
0&3&3&2&1&1&1\\
0&3&3&0&1&1&1\\
0&2&0&2&1&1&1\\
1&1&1&1&1&1&1\\
0&1&1&1&1&2&1\\
0&1&1&1&1&1&2
\end{smallmatrix},
\begin{smallmatrix}
0&0&0&0&0&1&0\\
0&4&3&2&1&1&1\\
0&3&4&0&1&1&1\\
0&2&0&3&1&1&1\\
0&1&1&1&1&2&1\\
1&1&1&1&2&0&3\\
0&1&1&1&1&3&1
\end{smallmatrix},
\begin{smallmatrix}
0&0&0&0&0&0&1\\
0&4&3&2&1&1&1\\
0&3&4&0&1&1&1\\
0&2&0&3&1&1&1\\
0&1&1&1&1&1&2\\
0&1&1&1&1&3&1\\
1&1&1&1&2&1&2
\end{smallmatrix}
\end{align*}
\begin{align*}
\begin{smallmatrix}
1&0&0&0&0&0&0\\
0&1&0&0&0&0&0\\
0&0&1&0&0&0&0\\
0&0&0&1&0&0&0\\
0&0&0&0&1&0&0\\
0&0&0&0&0&1&0\\
0&0&0&0&0&0&1
\end{smallmatrix},
\begin{smallmatrix}
0&1&0&0&0&0&0\\
1&1&1&0&0&1&1\\
0&1&4&1&1&1&1\\
0&0&1&0&1&1&1\\
0&0&1&1&1&1&1\\
0&1&1&1&1&1&1\\
0&1&1&1&1&1&1
\end{smallmatrix},
\begin{smallmatrix}
0&0&1&0&0&0&0\\
0&1&4&1&1&1&1\\
1&4&4&4&5&6&6\\
0&1&4&1&1&1&1\\
0&1&5&1&1&1&1\\
0&1&6&1&1&1&1\\
0&1&6&1&1&1&1
\end{smallmatrix},
\begin{smallmatrix}
0&0&0&1&0&0&0\\
0&0&1&0&1&1&1\\
0&1&4&1&1&1&1\\
1&0&1&1&0&1&1\\
0&1&1&0&1&1&1\\
0&1&1&1&1&1&1\\
0&1&1&1&1&1&1
\end{smallmatrix},
\begin{smallmatrix}
0&0&0&0&1&0&0\\
0&0&1&1&1&1&1\\
0&1&5&1&1&1&1\\
0&1&1&0&1&1&1\\
1&1&1&1&1&1&1\\
0&1&1&1&1&2&1\\
0&1&1&1&1&1&2
\end{smallmatrix},
\begin{smallmatrix}
0&0&0&0&0&1&0\\
0&1&1&1&1&1&1\\
0&1&6&1&1&1&1\\
0&1&1&1&1&1&1\\
0&1&1&1&1&2&1\\
1&1&1&1&2&0&3\\
0&1&1&1&1&3&1
\end{smallmatrix},
\begin{smallmatrix}
0&0&0&0&0&0&1\\
0&1&1&1&1&1&1\\
0&1&6&1&1&1&1\\
0&1&1&1&1&1&1\\
0&1&1&1&1&1&2\\
0&1&1&1&1&3&1\\
1&1&1&1&2&1&2
\end{smallmatrix}
\end{align*}
\begin{align*}
\begin{smallmatrix}
1&0&0&0&0&0&0\\
0&1&0&0&0&0&0\\
0&0&1&0&0&0&0\\
0&0&0&1&0&0&0\\
0&0&0&0&1&0&0\\
0&0&0&0&0&1&0\\
0&0&0&0&0&0&1
\end{smallmatrix},
\begin{smallmatrix}
0&1&0&0&0&0&0\\
1&1&5&4&3&4&4\\
0&5&2&5&1&1&1\\
0&4&5&2&3&3&3\\
0&3&1&3&1&1&1\\
0&4&1&3&1&1&1\\
0&4&1&3&1&1&1
\end{smallmatrix},
\begin{smallmatrix}
0&0&1&0&0&0&0\\
0&5&2&5&1&1&1\\
1&2&1&2&3&4&4\\
0&5&2&3&2&2&2\\
0&1&3&2&1&1&1\\
0&1&4&2&1&1&1\\
0&1&4&2&1&1&1
\end{smallmatrix},
\begin{smallmatrix}
0&0&0&1&0&0&0\\
0&4&5&2&3&3&3\\
0&5&2&3&2&2&2\\
1&2&3&6&2&3&3\\
0&3&2&2&1&1&1\\
0&3&2&3&1&1&1\\
0&3&2&3&1&1&1
\end{smallmatrix},
\begin{smallmatrix}
0&0&0&0&1&0&0\\
0&3&1&3&1&1&1\\
0&1&3&2&1&1&1\\
0&3&2&2&1&1&1\\
1&1&1&1&1&1&1\\
0&1&1&1&1&2&1\\
0&1&1&1&1&1&2
\end{smallmatrix},
\begin{smallmatrix}
0&0&0&0&0&1&0\\
0&4&1&3&1&1&1\\
0&1&4&2&1&1&1\\
0&3&2&3&1&1&1\\
0&1&1&1&1&2&1\\
1&1&1&1&2&0&3\\
0&1&1&1&1&3&1
\end{smallmatrix},
\begin{smallmatrix}
0&0&0&0&0&0&1\\
0&4&1&3&1&1&1\\
0&1&4&2&1&1&1\\
0&3&2&3&1&1&1\\
0&1&1&1&1&1&2\\
0&1&1&1&1&3&1\\
1&1&1&1&2&1&2
\end{smallmatrix}
\end{align*}
\noindent 
If we release the bound of the multiplicities, then there may exist an infinite family of fusion rings with the same local data that could be excluded.
\subsubsection{An Orbifold Construction of Fusion Rings}
Considering the orbifold theory, we exclude a family of fusion rings with unbounded Frobenius-Perron dimension. 
\begin{definition}\label{def:orbifold}
Suppose $\mathcal{R}$ is a fusion ring with basis $\{x_1=1,x_2,\ldots,x_m\}$  and $M_i$ is the fusion matrix of $x_i$. Let $\mathcal{R}_\ell$ be the fusion ring obtained by a $\mathbb{Z}_\ell$ action on $\mathcal{R}\otimes \mathbb{Z}_\ell$ with basis $\{x_{ij},X_k:\ 0\leq i,j\leq \ell-1, 2\leq k\leq m\}$, where $x_{ij}=g^i\otimes g^j$ and $X_k=\{x_k\otimes I,x_k\otimes g,\ldots,x_k\otimes g^{n-1} \}$, $g$ is the generator of $\mathbb{Z}_\ell$.
\end{definition}
\begin{proposition}\label{prop:orbifold theorem}
Suppose $\mathcal{R}$ is a fusion ring with basis $\{x_1=1,x_2,\ldots,x_m\}$ and $\mathcal{R}_\ell$ is the fusion ring obtained in Definition \ref{def:orbifold}.
Then for any local set $S\subseteq\{X_k:2\leq k\leq m  \}$, we have
\begin{align*}
T_n^S(\mathcal{R}_\ell)=\ell^2 T_n^S(\mathcal{R}),
\end{align*}
where $T_n^S(\mathcal{R}_\ell)$ and $T_n^S(\mathcal{R})$ are the primary $(S,n)$-matrices of $\mathcal{R}_\ell$ and $\mathcal{R}$ respectively.
\end{proposition}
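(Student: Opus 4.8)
The strategy is a direct computation from the fusion rules of $\mathcal{R}_\ell$. It is convenient to rewrite the defining quantity as
\begin{align*}
T_n^S(\mathcal{R}_\ell)=\sum_{Y}\|N_Y\|^{2-n}(N_Y^S)^{\otimes n},
\end{align*}
where $Y$ runs over the basis $\{x_{ab}\}\cup\{X_k\}$ of $\mathcal{R}_\ell$, $N_Y$ is the fusion matrix of $Y$, $\|N_Y\|=d(Y)$ its Perron-Frobenius dimension, and $N_Y^S$ the submatrix of $N_Y$ with rows and columns indexed by $S$. Throughout I identify $S$ with the subset $\{\,k:X_k\in S\,\}\subseteq\{2,\dots,m\}$, so that $M_i^S$ on the right-hand side is the corresponding submatrix of the fusion matrix $M_i$ of $x_i\in\mathcal{R}$, and $T_n^S(\mathcal{R})=\sum_{i=1}^m\|M_i\|^{2-n}(M_i^S)^{\otimes n}$.

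The first step is to record the fusion data of $\mathcal{R}_\ell$ that survive restriction to $S$. From the orbifold construction in Definition~\ref{def:orbifold}: the $\ell^2$ basis elements $x_{ab}$ are invertible with $d(x_{ab})=1$; each orbit object has $d(X_k)=\ell\,d(x_k)$, so $\|N_{X_k}\|=\ell\|M_k\|$; and $[\,X_i\otimes X_j:X_k\,]=\ell\,N_{ij}^k$ for $i,j,k\ge 2$, so that $N_{X_i}^S=\ell\,M_i^S$. Moreover $x_{ab}\otimes X_k=X_k$ for all $a,b$ and all $k\ge 2$: indeed $x_{ab}\otimes X_k$ is simple, of dimension $d(X_k)$, hence of the form $X_{k'}$; each invertible occurring in $X_k\otimes X_{k^*}$ occurs with multiplicity at most one ($X_k$ being simple), while the pointed part of $X_k\otimes X_{k^*}$ has total dimension $\ell^2 d_k^2-\ell^2\sum_{l\ge2}N_{kk^*}^l d_l=\ell^2 N_{kk^*}^1=\ell^2$, so all $\ell^2$ invertibles stabilise $X_k$. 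Hence $N_{x_{ab}}^S=I_{|S|}$.

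Finally I would split the sum and collect the powers of $\ell$. The invertibles contribute $\sum_{a,b=0}^{\ell-1}1^{2-n}(I_{|S|})^{\otimes n}=\ell^2\,I_{|S|}^{\otimes n}$, while the orbit objects contribute
\begin{align*}
\sum_{k=2}^m(\ell\|M_k\|)^{2-n}(\ell\,M_k^S)^{\otimes n}
&=\ell^{2-n}\ell^{\,n}\sum_{k=2}^m\|M_k\|^{2-n}(M_k^S)^{\otimes n}\\
&=\ell^2\sum_{k=2}^m\|M_k\|^{2-n}(M_k^S)^{\otimes n}.
\end{align*}
Since the term $i=1$ in $T_n^S(\mathcal{R})$ equals $I_{|S|}^{\otimes n}$ (because $1\notin S$, so $M_1=I$, $\|M_1\|=1$ and $M_1^S=I_{|S|}$), adding the two contributions gives $T_n^S(\mathcal{R}_\ell)=\ell^2\bigl(I_{|S|}^{\otimes n}+\sum_{k=2}^m\|M_k\|^{2-n}(M_k^S)^{\otimes n}\bigr)=\ell^2\,T_n^S(\mathcal{R})$. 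The only point requiring genuine care is the second step, i.e.\ establishing the fusion rules of $\mathcal{R}_\ell$ — especially that every $x_{ab}$ acts trivially on every orbit object and that $[\,X_i\otimes X_j:X_k\,]=\ell N_{ij}^k$ with $d(X_k)=\ell d(x_k)$; after that the argument is pure bookkeeping, each orbit object contributing the compensating factor $\ell^{2-n}\cdot\ell^{n}=\ell^2$ and the $\ell^2$ invertibles reproducing exactly $\ell^2$ copies of the $i=1$ term of $T_n^S(\mathcal{R})$.
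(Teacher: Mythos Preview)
Your proof is correct and follows essentially the same approach as the paper: both arguments record that $N_{x_{ab}}^S=I_{|S|}$, $\|N_{x_{ab}}\|=1$, $N_{X_k}^S=\ell M_k^S$, $\|N_{X_k}\|=\ell\|M_k\|$, then split the defining sum into the $\ell^2$ invertible terms and the $m-1$ orbit terms and collect the factor $\ell^2$. The only difference is that you supply detailed justification for the orbifold fusion rules (in particular that every $x_{ab}$ fixes every $X_k$), whereas the paper simply asserts them.
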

\begin{proof}
Note that $M_{x_{ij}}^S=I$ and $M_{X_k}^S=\ell M_i^S$, $\|M_{x_{ij}}\|=1$ and $\|M_{X_k}\|=\ell\| M_i\|$. So
\begin{align*}
T_n^S(\mathcal{R}_\ell)&=\sum_{0\leq i,j\leq \ell-1}
\|M_{x_{ij}}\|^2\bigg(\frac{M_{x_{ij}}^S}{\|M_{x_{ij}}\|}\bigg)^{\otimes n}+\sum_{k=2}^m \|M_{X_k}\|^2\bigg(  \frac{M_{X_k}^S}{\|M_{X_k}\|} \bigg)^{\otimes n}\\
&=\ell^2 M_1+\ell^2\sum_{k=2}^m \|M_k\|^2\bigg(  \frac{M_{k}^S}{\|M_{k}\|} \bigg)^{\otimes n}\\
&=\ell^2T_n^S(\mathcal{R}).
\end{align*}
This completes the proof of the theorem.
\end{proof}

\begin{remark}\label{rem:orbifold}
Let $\mathcal{R}$ be the above simple integral fusion ring $\mathcal{K}_7$ of rank 7 and $\mathcal{R}_\ell$ is the fusion ring obtained from $\mathcal{R}$ in Proposition \ref{prop:orbifold theorem}, $\ell\geq1$. So $T_3^S(\mathcal{R}_\ell)=\ell^2 T_3^S(\mathcal{R})$ is not positive since it has a negative eigenvalue $\simeq
-0.62949\ell^2$.
 Therefore,  $\mathcal{R}_\ell$ admits no unitary categorification by Theorem \ref{thm:localization criterion}. We obtain a family of fusion rings with unbounded dimension, which admit no unitary categorification.
\end{remark}
\subsection{Five Simple Fusion Rings Excluded in a Single Time} \label{sub:S8}
There are  five simple integral fusion rings of rank 8, FPdim 660, type [[1,1],[5,2],[10,2],[11,1],[12,2]], multiplicity 5 on page 57 in \cite{LPW21}, with the same local data on the last two simple objects.
\begin{align*}\footnotesize
\begin{smallmatrix}
1&0&0&0&0&0&0&0\\
0&1&0&0&0&0&0&0\\
0&0&1&0&0&0&0&0\\
0&0&0&1&0&0&0&0\\
0&0&0&0&1&0&0&0\\
0&0&0&0&0&1&0&0\\
0&0&0&0&0&0&1&0\\
0&0&0&0&0&0&0&1\\
\end{smallmatrix},
\begin{smallmatrix}
0&1&0&0&0&0&0&0\\
0&0&1&1&1&0&0&0\\
1&0&0&0&0&0&1&1\\
0&0&1&0&1&1&1&1\\
0&0&1&1&0&1&1&1\\
0&0&0&1&1&1&1&1\\
0&1&0&1&1&1&1&1\\
0&1&0&1&1&1&1&1
\end{smallmatrix},
\begin{smallmatrix}
0&0&1&0&0&0&0&0\\
1&0&0&0&0&0&1&1\\
0&1&0&1&1&0&0&0\\
0&1&0&0&1&1&1&1\\
0&1&0&1&0&1&1&1\\
0&0&0&1&1&1&1&1\\
0&0&1&1&1&1&1&1\\
0&0&1&1&1&1&1&1
\end{smallmatrix},
\begin{smallmatrix}
0&0&0&1&0&0&0&0\\
0&0&1&0&1&1&1&1\\
0&1&0&0&1&1&1&1\\
1&0&0&2&2&1&2&2\\
0&1&1&2&0&2&2&2\\
0&1&1&1&2&2&2&2\\
0&1&1&2&2&2&2&2\\
0&1&1&2&2&2&2&2
\end{smallmatrix},
\begin{smallmatrix}
0&0&0&0&1&0&0&0\\
0&0&1&1&0&1&1&1\\
0&1&0&1&0&1&1&1\\
0&1&1&2&0&2&2&2\\
1&0&0&0&4&1&2&2\\
0&1&1&2&1&2&2&2\\
0&1&1&2&2&2&2&2\\
0&1&1&2&2&2&2&2
\end{smallmatrix},
\begin{smallmatrix}
0&0&0&0&0&1&0&0\\
0&0&0&1&1&1&1&1\\
0&0&0&1&1&1&1&1\\
0&1&1&1&2&2&2&2\\
0&1&1&2&1&2&2&2\\
1&1&1&2&2&2&2&2\\
0&1&1&2&2&2&3&2\\
0&1&1&2&2&2&2&3
\end{smallmatrix},
\begin{smallmatrix}
0&0&0&0&0&0&1&0\\
0&1&0&1&1&1&1&1\\
0&0&1&1&1&1&1&1\\
0&1&1&2&2&2&2&2\\
0&1&1&2&2&2&2&2\\
0&1&1&2&2&2&3&2\\
1&1&1&2&2&3&0&5\\
0&1&1&2&2&2&5&1
\end{smallmatrix},
\begin{smallmatrix}
0&0&0&0&0&0&0&1\\
0&1&0&1&1&1&1&1\\
0&0&1&1&1&1&1&1\\
0&1&1&2&2&2&2&2\\
0&1&1&2&2&2&2&2\\
0&1&1&2&2&2&2&3\\
0&1&1&2&2&2&5&1\\
1&1&1&2&2&3&1&4
\end{smallmatrix}
\end{align*}
\begin{align*}\footnotesize
\begin{smallmatrix}
1&0&0&0&0&0&0&0\\
0&1&0&0&0&0&0&0\\
0&0&1&0&0&0&0&0\\
0&0&0&1&0&0&0&0\\
0&0&0&0&1&0&0&0\\
0&0&0&0&0&1&0&0\\
0&0&0&0&0&0&1&0\\
0&0&0&0&0&0&0&1\\
\end{smallmatrix},
\begin{smallmatrix}
0&1&0&0&0&0&0&0\\
0&0&1&1&1&0&0&0\\
1&0&0&0&0&0&1&1\\
0&0&1&0&1&1&1&1\\
0&0&1&1&0&1&1&1\\
0&0&0&1&1&1&1&1\\
0&1&0&1&1&1&1&1\\
0&1&0&1&1&1&1&1
\end{smallmatrix},
\begin{smallmatrix}
0&0&1&0&0&0&0&0\\
1&0&0&0&0&0&1&1\\
0&1&0&1&1&0&0&0\\
0&1&0&0&1&1&1&1\\
0&1&0&1&0&1&1&1\\
0&0&0&1&1&1&1&1\\
0&0&1&1&1&1&1&1\\
0&0&1&1&1&1&1&1
\end{smallmatrix},
\begin{smallmatrix}
0&0&0&1&0&0&0&0\\
0&0&1&0&1&1&1&1\\
0&1&0&0&1&1&1&1\\
1&0&0&3&1&1&2&2\\
0&1&1&1&1&2&2&2\\
0&1&1&1&2&2&2&2\\
0&1&1&2&2&2&2&2\\
0&1&1&2&2&2&2&2
\end{smallmatrix},
\begin{smallmatrix}
0&0&0&0&1&0&0&0\\
0&0&1&1&0&1&1&1\\
0&1&0&1&0&1&1&1\\
0&1&1&1&1&2&2&2\\
1&0&0&1&3&1&2&2\\
0&1&1&2&1&2&2&2\\
0&1&1&2&2&2&2&2\\
0&1&1&2&2&2&2&2
\end{smallmatrix},
\begin{smallmatrix}
0&0&0&0&0&1&0&0\\
0&0&0&1&1&1&1&1\\
0&0&0&1&1&1&1&1\\
0&1&1&1&2&2&2&2\\
0&1&1&2&1&2&2&2\\
1&1&1&2&2&2&2&2\\
0&1&1&2&2&2&3&2\\
0&1&1&2&2&2&2&3
\end{smallmatrix},
\begin{smallmatrix}
0&0&0&0&0&0&1&0\\
0&1&0&1&1&1&1&1\\
0&0&1&1&1&1&1&1\\
0&1&1&2&2&2&2&2\\
0&1&1&2&2&2&2&2\\
0&1&1&2&2&2&3&2\\
1&1&1&2&2&3&0&5\\
0&1&1&2&2&2&5&1
\end{smallmatrix},
\begin{smallmatrix}
0&0&0&0&0&0&0&1\\
0&1&0&1&1&1&1&1\\
0&0&1&1&1&1&1&1\\
0&1&1&2&2&2&2&2\\
0&1&1&2&2&2&2&2\\
0&1&1&2&2&2&2&3\\
0&1&1&2&2&2&5&1\\
1&1&1&2&2&3&1&4
\end{smallmatrix}
\end{align*}
\begin{align*}\footnotesize
\begin{smallmatrix}
1&0&0&0&0&0&0&0\\
0&1&0&0&0&0&0&0\\
0&0&1&0&0&0&0&0\\
0&0&0&1&0&0&0&0\\
0&0&0&0&1&0&0&0\\
0&0&0&0&0&1&0&0\\
0&0&0&0&0&0&1&0\\
0&0&0&0&0&0&0&1\\
\end{smallmatrix},
\begin{smallmatrix}
0&1&0&0&0&0&0&0\\
0&0&1&1&1&0&0&0\\
1&0&0&0&0&0&1&1\\
0&0&1&1&0&1&1&1\\
0&0&1&0&1&1&1&1\\
0&0&0&1&1&1&1&1\\
0&1&0&1&1&1&1&1\\
0&1&0&1&1&1&1&1
\end{smallmatrix},
\begin{smallmatrix}
0&0&1&0&0&0&0&0\\
1&0&0&0&0&0&1&1\\
0&1&0&1&1&0&0&0\\
0&1&0&1&0&1&1&1\\
0&1&0&0&1&1&1&1\\
0&0&0&1&1&1&1&1\\
0&0&1&1&1&1&1&1\\
0&0&1&1&1&1&1&1
\end{smallmatrix},
\begin{smallmatrix}
0&0&0&1&0&0&0&0\\
0&0&1&1&0&1&1&1\\
0&1&0&1&0&1&1&1\\
1&1&1&0&3&1&2&2\\
0&0&0&3&0&2&2&2\\
0&1&1&1&2&2&2&2\\
0&1&1&2&2&2&2&2\\
0&1&1&2&2&2&2&2
\end{smallmatrix},
\begin{smallmatrix}
0&0&0&0&1&0&0&0\\
0&0&1&0&1&1&1&1\\
0&1&0&0&1&1&1&1\\
0&0&0&3&0&2&2&2\\
1&1&1&0&3&1&2&2\\
0&1&1&2&1&2&2&2\\
0&1&1&2&2&2&2&2\\
0&1&1&2&2&2&2&2
\end{smallmatrix},
\begin{smallmatrix}
0&0&0&0&0&1&0&0\\
0&0&0&1&1&1&1&1\\
0&0&0&1&1&1&1&1\\
0&1&1&1&2&2&2&2\\
0&1&1&2&1&2&2&2\\
1&1&1&2&2&2&2&2\\
0&1&1&2&2&2&3&2\\
0&1&1&2&2&2&2&3\\
\end{smallmatrix},
\begin{smallmatrix}
0&0&0&0&0&0&1&0\\
0&1&0&1&1&1&1&1\\
0&0&1&1&1&1&1&1\\
0&1&1&2&2&2&2&2\\
0&1&1&2&2&2&2&2\\
0&1&1&2&2&2&3&2\\
1&1&1&2&2&3&0&5\\
0&1&1&2&2&2&5&1
\end{smallmatrix},
\begin{smallmatrix}
0&0&0&0&0&0&0&1\\
0&1&0&1&1&1&1&1\\
0&0&1&1&1&1&1&1\\
0&1&1&2&2&2&2&2\\
0&1&1&2&2&2&2&2\\
0&1&1&2&2&2&2&3\\
0&1&1&2&2&2&5&1\\
1&1&1&2&2&3&1&4
\end{smallmatrix}
\end{align*}
\begin{align*}\footnotesize
\begin{smallmatrix}
1&0&0&0&0&0&0&0\\
0&1&0&0&0&0&0&0\\
0&0&1&0&0&0&0&0\\
0&0&0&1&0&0&0&0\\
0&0&0&0&1&0&0&0\\
0&0&0&0&0&1&0&0\\
0&0&0&0&0&0&1&0\\
0&0&0&0&0&0&0&1\\
\end{smallmatrix},
\begin{smallmatrix}
0&1&0&0&0&0&0&0\\
0&0&1&1&1&0&0&0\\
1&0&0&0&0&0&1&1\\
0&0&1&1&0&1&1&1\\
0&0&1&0&1&1&1&1\\
0&0&0&1&1&1&1&1\\
0&1&0&1&1&1&1&1\\
0&1&0&1&1&1&1&1
\end{smallmatrix},
\begin{smallmatrix}
0&0&1&0&0&0&0&0\\
1&0&0&0&0&0&1&1\\
0&1&0&1&1&0&0&0\\
0&1&0&1&0&1&1&1\\
0&1&0&0&1&1&1&1\\
0&0&0&1&1&1&1&1\\
0&0&1&1&1&1&1&1\\
0&0&1&1&1&1&1&1
\end{smallmatrix},
\begin{smallmatrix}
0&0&0&1&0&0&0&0\\
0&0&1&1&0&1&1&1\\
0&1&0&1&0&1&1&1\\
1&1&1&1&2&1&2&2\\
0&0&0&2&1&2&2&2\\
0&1&1&1&2&2&2&2\\
0&1&1&2&2&2&2&2\\
0&1&1&2&2&2&2&2
\end{smallmatrix},
\begin{smallmatrix}
0&0&0&0&1&0&0&0\\
0&0&1&0&1&1&1&1\\
0&1&0&0&1&1&1&1\\
0&0&0&2&1&2&2&2\\
1&1&1&1&2&1&2&2\\
0&1&1&2&1&2&2&2\\
0&1&1&2&2&2&2&2\\
0&1&1&2&2&2&2&2
\end{smallmatrix},
\begin{smallmatrix}
0&0&0&0&0&1&0&0\\
0&0&0&1&1&1&1&1\\
0&0&0&1&1&1&1&1\\
0&1&1&1&2&2&2&2\\
0&1&1&2&1&2&2&2\\
1&1&1&2&2&2&2&2\\
0&1&1&2&2&2&3&2\\
0&1&1&2&2&2&2&3
\end{smallmatrix},
\begin{smallmatrix}
0&0&0&0&0&0&1&0\\
0&1&0&1&1&1&1&1\\
0&0&1&1&1&1&1&1\\
0&1&1&2&2&2&2&2\\
0&1&1&2&2&2&2&2\\
0&1&1&2&2&2&3&2\\
1&1&1&2&2&3&0&5\\
0&1&1&2&2&2&5&1
\end{smallmatrix},
\begin{smallmatrix}
0&0&0&0&0&0&0&1\\
0&1&0&1&1&1&1&1\\
0&0&1&1&1&1&1&1\\
0&1&1&2&2&2&2&2\\
0&1&1&2&2&2&2&2\\
0&1&1&2&2&2&2&3\\
0&1&1&2&2&2&5&1\\
1&1&1&2&2&3&1&4
\end{smallmatrix}
\end{align*}
\begin{align*}\footnotesize
\begin{smallmatrix}
1&0&0&0&0&0&0&0\\
0&1&0&0&0&0&0&0\\
0&0&1&0&0&0&0&0\\
0&0&0&1&0&0&0&0\\
0&0&0&0&1&0&0&0\\
0&0&0&0&0&1&0&0\\
0&0&0&0&0&0&1&0\\
0&0&0&0&0&0&0&1\\
\end{smallmatrix},
\begin{smallmatrix}
0&1&0&0&0&0&0&0\\
0&0&1&1&1&0&0&0\\
1&0&0&0&0&0&1&1\\
0&0&1&0&1&1&1&1\\
0&0&1&1&0&1&1&1\\
0&0&0&1&1&1&1&1\\
0&1&0&1&1&1&1&1\\
0&1&0&1&1&1&1&1
\end{smallmatrix},
\begin{smallmatrix}
0&0&1&0&0&0&0&0\\
1&0&0&0&0&0&1&1\\
0&1&0&1&1&0&0&0\\
0&1&0&0&1&1&1&1\\
0&1&0&1&0&1&1&1\\
0&0&0&1&1&1&1&1\\
0&0&1&1&1&1&1&1\\
0&0&1&1&1&1&1&1
\end{smallmatrix},
\begin{smallmatrix}
0&0&0&1&0&0&0&0\\
0&0&1&0&1&1&1&1\\
0&1&0&0&1&1&1&1\\
0&1&1&2&0&2&2&2\\
1&0&0&2&2&1&2&2\\
0&1&1&1&2&2&2&2\\
0&1&1&2&2&2&2&2\\
0&1&1&2&2&2&2&2
\end{smallmatrix},
\begin{smallmatrix}
0&0&0&0&1&0&0&0\\
0&0&1&1&0&1&1&1\\
0&1&0&1&0&1&1&1\\
1&0&0&2&2&1&2&2\\
0&1&1&0&2&2&2&2\\
0&1&1&2&1&2&2&2\\
0&1&1&2&2&2&2&2\\
0&1&1&2&2&2&2&2
\end{smallmatrix},
\begin{smallmatrix}
0&0&0&0&0&1&0&0\\
0&0&0&1&1&1&1&1\\
0&0&0&1&1&1&1&1\\
0&1&1&1&2&2&2&2\\
0&1&1&2&1&2&2&2\\
1&1&1&2&2&2&2&2\\
0&1&1&2&2&2&3&2\\
0&1&1&2&2&2&2&3
\end{smallmatrix},
\begin{smallmatrix}
0&0&0&0&0&0&1&0\\
0&1&0&1&1&1&1&1\\
0&0&1&1&1&1&1&1\\
0&1&1&2&2&2&2&2\\
0&1&1&2&2&2&2&2\\
0&1&1&2&2&2&3&2\\
1&1&1&2&2&3&0&5\\
0&1&1&2&2&2&5&1
\end{smallmatrix},
\begin{smallmatrix}
0&0&0&0&0&0&0&1\\
0&1&0&1&1&1&1&1\\
0&0&1&1&1&1&1&1\\
0&1&1&2&2&2&2&2\\
0&1&1&2&2&2&2&2\\
0&1&1&2&2&2&2&3\\
0&1&1&2&2&2&5&1\\
1&1&1&2&2&3&1&4
\end{smallmatrix}
\end{align*}
They are all excluded by the Schur product criterion in \cite{LPW21}. Here, we exclude them again by localized criterion in a single time.
Let $S=\{x_7,x_8\}$. Then they have the same local fusion matrices:
\begin{align*}
M_1^S=\begin{pmatrix}
1&0\\0&1
\end{pmatrix},M_2^S=M_3^S=\begin{pmatrix}
1&1\\1&1
\end{pmatrix},M_4^S=M_5^S=\begin{pmatrix}
2&2\\2&2
\end{pmatrix}.
\end{align*}
\begin{align*}
M_6^S=\begin{pmatrix}
3&2\\2&3
\end{pmatrix},M_7^S=\begin{pmatrix}
0&5\\5&1
\end{pmatrix},
M_8^S=\begin{pmatrix}
5&1\\1&4
\end{pmatrix}.
\end{align*}
So the primary $(S,3)$-matrix $T_3^S$ of them are  same as follows:
\begin{align*}
\frac{1}{132}\begin{pmatrix}
2095  &755 & 755&  463 & 755  &463  &463& 1746\\
755 &1820 & 463&  700 & 463 & 700 &1746 & 727\\
 755&  463& 1820&  700 & 463& 1746  &700&  727\\
  463  &700 & 700 &1600& 1746 & 727 & 727  &711\\
 755  &463 & 463 &1746& 1820 & 700  &700 & 727\\
 463  &700& 1746 & 727&  700& 1600 & 727 & 711\\
  463 &1746  &700 & 727&  700 & 727 &1600 & 711\\
 1746 & 727&  727  &711&  727  &711&  711& 1435
\end{pmatrix}.
\end{align*}
It is not positive since it has a negative eigenvalue $\simeq -2.948812$ (see Appendix \ref{App:S8}). 
Hence these five fusion rings admit no unitary categorification by Theorem \ref{thm:localization criterion}. 
\begin{remark}
This example of  five
fusion rings indicates that one could exclude more than one fusion rings by checking the positivity of only one matrix.
This could not be done by the Schur product criterion. So localized criteria could improve the efficiency to check fusion rings.
\end{remark}

\section{Reduced Twisted Criteria}\label{sec:reduced twist criteria}
In this section, we twist the fusion (sub)matrices and take Hadamard product of fusion (sub)matrices to obtain much more powerful criteria. Recall that the computational complexity of checking fusion rings of rank $m$ by primary $n$-criterion
 grows exponentially $O(m^{3n})$. Note that the order of the matrix remains the same under the Hadamard product. 
 So we apply the Hadamard product to primary criteria and reduce the computation complexity
to $O(m^3+m^2 \log n)$.  Moreover, the twisting of the fusion (sub)matrices leads to corresponding criteria, which allow us to
 exclude fusion rings with unknown data.
 \subsection{Results and Examples}
\begin{theorem}[Reduced twisted criteria]\label{thm:hadamard criterion}
Suppose $\mathcal{R}$ is a fusion ring with basis $\{x_1=1,x_2,\ldots,x_m\}$  and $M_i$ is the fusion matrix of $x_i$. Let $S\subseteq \{1,2,\ldots,m\}$ be a subset and $U_j\in M_{|S|}(\mathbb{C})$  are unitary matrices.
  If $\mathcal{R}$ admits a unitary categorification, then for any $n \geq 1$, we have
\begin{align}\label{eq:n positivity locol criterion hadamard}
\sum_{i=1}^m \|M_i\|^2\bigg(\frac{ M_i^S }{\|M_i\|}\bigg)^{\ast n} \geq & 0, \quad \text{ Reduced }\\
\sum_{i=1}^m \|M_i\|^2 U_j^{\otimes_{j=1}^n} \bigg(\frac{M_i^S}{\|M_i\|}\bigg)^{\otimes n} (U_j^*)^{\otimes_{j=1}^n}  \geq & 0,  \text{ Twisted } \\
\sum_{i=1}^m \|M_i\|^2\bigg(\frac{U_j M_i^S U_j^*}{\|M_i\|}\bigg)^{\ast_{j=1}^n} \geq & 0,  \text{ Reduced twisted }
\end{align}
where $\ast$ is the Hadamard product of matrices.
\end{theorem}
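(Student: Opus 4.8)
The plan is to deduce all three inequalities from the $S$-localized $n$-criterion of Theorem~\ref{thm:localization criterion}, that is, from the positivity of
\[
T_n^S=\sum_{i=1}^m\|M_i\|^2\Bigl(\frac{M_i^S}{\|M_i\|}\Bigr)^{\otimes n}\ge 0
\]
in $M_{|S|}(\mathbb{C})^{\otimes n}$, by applying two operations that preserve positivity of operators: conjugation by a unitary, and compression by an isometry. The key structural remark is that the Hadamard product of matrices is a compression of their tensor product by the ``diagonal'' isometry $V\colon\mathbb{C}^{|S|}\to(\mathbb{C}^{|S|})^{\otimes n}$ given on the standard basis by $Ve_k=e_k^{\otimes n}$; indeed an entrywise computation gives $\bigl(V^*(A_1\otimes\cdots\otimes A_n)V\bigr)_{kl}=\prod_{j=1}^n(A_j)_{kl}=(A_1\ast\cdots\ast A_n)_{kl}$, so $V^*(A_1\otimes\cdots\otimes A_n)V=A_1\ast\cdots\ast A_n$.

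For the \textbf{twisted} inequality I would put $W:=U_1\otimes U_2\otimes\cdots\otimes U_n$, a unitary on $(\mathbb{C}^{|S|})^{\otimes n}$. Since $T_n^S\ge 0$ implies $WT_n^SW^*\ge 0$, and $W\bigl(A_1\otimes\cdots\otimes A_n\bigr)W^*=(U_1A_1U_1^*)\otimes\cdots\otimes(U_nA_nU_n^*)$, conjugating $T_n^S$ by $W$ gives $\sum_{i=1}^m\|M_i\|^2\,W\bigl(M_i^S/\|M_i\|\bigr)^{\otimes n}W^*\ge 0$, which is the stated twisted criterion. For the \textbf{reduced} inequality I would instead compress $T_n^S$ by $V$: since $V^*XV\ge 0$ whenever $X\ge 0$, the identity above gives $V^*T_n^SV=\sum_{i=1}^m\|M_i\|^2\bigl(M_i^S/\|M_i\|\bigr)^{\ast n}\ge 0$. (Dividing by $\|M_i\|$ is harmless: $M_1=I$, and in general $\|M_i\|=d_i\ge 1$ because the unit occurs in $x_ix_{i^*}$.)

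For the \textbf{reduced twisted} inequality I would compose the two moves and consider $V^*WT_n^SW^*V\ge 0$, evaluating it with the two identities in succession:
\[
V^*W\Bigl(\tfrac{M_i^S}{\|M_i\|}\Bigr)^{\otimes n}W^*V
=V^*\Bigl(\tfrac{U_1M_i^SU_1^*}{\|M_i\|}\otimes\cdots\otimes\tfrac{U_nM_i^SU_n^*}{\|M_i\|}\Bigr)V
=\Bigl(\tfrac{U_1M_i^SU_1^*}{\|M_i\|}\Bigr)\ast\cdots\ast\Bigl(\tfrac{U_nM_i^SU_n^*}{\|M_i\|}\Bigr),
\]
so that $V^*WT_n^SW^*V=\sum_{i=1}^m\|M_i\|^2\bigl(U_jM_i^SU_j^*/\|M_i\|\bigr)^{\ast_{j=1}^n}\ge 0$, as claimed. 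The only step requiring care is the entrywise verification that the diagonal compression of a tensor product equals the Hadamard product with the correct index ordering; beyond that the argument is just the stability of positivity under $X\mapsto YXY^*$, so I do not expect a genuine obstacle here --- the mathematical content lies in Theorem~\ref{thm:localization criterion}, and the present statement merely repackages it via unitary twists and the Schur/Hadamard compression.
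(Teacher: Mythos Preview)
Your proof is correct and follows essentially the same route as the paper: both start from the positivity of $T_n^S$ (Theorem~\ref{thm:localization criterion}), conjugate by $U_1\otimes\cdots\otimes U_n$, and then compress to the diagonal to turn tensor products into Hadamard products. Your diagonal isometry $V$ with $Ve_k=e_k^{\otimes n}$ is exactly the paper's projection $P_{S^{(n)}}$ onto $\{(i,\ldots,i):i\in S\}$ followed by restriction, so the arguments are the same up to notation.
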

\begin{proof}
Define
\[S^{(n)}:=\{(\underbrace{i,i,\dots,i}_n):\ i\in S\},\quad S^{(1)}=S.\]
Then
\[(U_j M_i^S U_j^*)^{\ast_{j=1}^n}=P_{S^{(n)}}(U_j M_i^S U_j^*)^{\otimes_{j=1}^n} P_{S^{(n)}}.\]
Theorem \ref{thm:localization criterion} implies
\begin{align*}
\sum_{i=1}^m \|M_i\|^2\bigg(\frac{U_j M_i^S U_j^*}{\|M_i\|}\bigg)^{\ast_{j=1}^n}=
P_{S^{(n)}}  U_j^{\otimes_{j=1}^n}\sum_{i=1}^m \|M_i\|^2  \bigg(\frac{ M_i^S}{\|M_i\|}\bigg)^{\otimes n}  U_j^{*\otimes_{j=1}^n} P_{S^{(n)}}\bigg|_{S^{(n)}\times S^{(n)}}\geq0 .
\end{align*}
This completes the proof of the theorem.
\end{proof}

\begin{remark}
If there are enough zero entries of the fusion matrices, then we can eliminate the unknown  entries of the fusion matrices
by choosing proper unitary matrices.
\end{remark}

\begin{remark}
In Theorem \ref{thm:hadamard criterion}, when the fusion matrices $M_j$ is twisted by unitary matrices $U_j$, we have
\begin{align*}
\sum_{i=1}^m \|M_i\|^2  \bigg(\frac{ (U_jM_i U_j^*)^S}{\|M_i\|}\bigg)^{\otimes_{j=1}^n}   \geq 0.
\end{align*}
This gives a different twisted positivity for a fusion ring.
\end{remark}

Now we apply the criteria to a family of fusion rings.
\begin{theorem}\label{thm:two special simple object fusion ring}
Let $\mathcal{R}$ be a fusion ring with basis $\{x_1=1,x_2,\ldots,x_m\}$  and $M_i$ is the fusion matrix of $x_i$. 
Suppose that  
\begin{align*}
x_2 x_2^*=& I+sx_2+\ell x_3\\
x_3 x_3^*=& I+tx_2+k x_3+\sum_{i=4}^m \lambda_i x_i
\end{align*}
for $s, t, k, \lambda_i \geq 0,\ell \geq 1$, $i\geq 4$.
Then we have the following statements:
\begin{enumerate}
\item If $x_2$ is self-dual and and there exist $n\geq2$, $a,b\geq1$, $a+b=n$, such that
\begin{align}\label{eq:AB local}
\bigg(\frac{s^a t^b}{d_2^{n-2}}+\frac{\ell^a k^b}{d_3^{n-2}}+1\bigg)\bigg(\frac{s^b t^a}{d_2^{n-2}}+\frac{\ell^b k^a}{d_3^{n-2}}+1\bigg)-\bigg(\frac{\ell^n}{d_2^{n-2}}+\frac{t^n}{d_3^{n-2}}\bigg)^2<0,
\end{align}
 then $\mathcal{R}$ admits no unitary categorification.
\item If $x_2$ is not self-dual and there exist $n\geq2$, $a,b\geq1$, $a+b=n$, such that
 \begin{align}\label{eq:AA* local}
 \frac{2s^n}{d_2^{n-2}}+1-\frac{s^a\tau(x_2^3)^b+s^b\tau(x_2^3)^a}{d_2^{n-2}}<0,
 \end{align}
  then $\mathcal{R}$ admits no unitary categorification, where $\tau$ is the faithful tracial state on $\mathcal{R}$.
\end{enumerate}
\end{theorem}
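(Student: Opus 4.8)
The plan is to obtain both inequalities as instances of the reduced twisted criterion (Theorem~\ref{thm:hadamard criterion}) applied to the two-element local set $S=\{2,3\}$, with the twist taken to be the identity on $a$ of the $n$ slots and the transposition $P=\left(\begin{smallmatrix}0&1\\1&0\end{smallmatrix}\right)$ on the remaining $b=n-a$ slots. The criterion then yields a $2\times 2$ positive semidefinite matrix $T$, and a contradiction will follow from $\det T<0$. Throughout I will use Frobenius reciprocity in the forms $N_{ij}^k=N_{jk^*}^{i^*}=N_{i^*k}^j$, equivalently $N_{ij}^k=\langle x_ix_j,x_k\rangle=\langle x_j,x_{i^*}x_k\rangle=\langle x_i,x_kx_{j^*}\rangle$.

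First I would record two preliminary reductions. In case (1) we are given $x_2=x_2^*$; since $x_2x_2^*$ is self-adjoint, comparing $I+sx_2+\ell x_3$ with its adjoint and using $\ell\geq 1$ forces $x_3=x_3^*$ as well. In case (2), $x_2\neq x_2^*$; here reciprocity gives $N_{22}^2=\langle x_2,x_2x_2^*\rangle=s$ and also $N_{22}^2=N_{2,2^*}^{2^*}$, which is the multiplicity of $x_{2^*}$ in $x_2x_2^*$, hence equals $\ell$ if $x_2^*=x_3$ and $0$ otherwise. So in case (2) either $s=0$ — in which case \eqref{eq:AA* local} reads $1<0$ and cannot hold, so there is nothing to prove — or $x_2^*=x_3$, and then the same identities give $s=\ell$, $t=k=s$, and $x_3x_3^*=I+s(x_2+x_3)+\sum_{i\geq 4}\lambda_ix_i$. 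We may therefore assume $x_2^*=x_3$ in case (2).

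Next I would read off the blocks $M_i^S$. In case (1), $N_{i2}^2=N_{22}^{i^*}$ and $N_{i3}^3=N_{33}^{i^*}$, so $M_1^S=I_2$, $M_2^S=\left(\begin{smallmatrix}s&\ell\\\ell&t\end{smallmatrix}\right)$, $M_3^S=\left(\begin{smallmatrix}\ell&t\\t&k\end{smallmatrix}\right)$, and for $i\geq 4$ the upper-left entry $N_{i2}^2=N_{22}^{i^*}$ is $0$ while $N_{i3}^3=\lambda_{i^*}$, so $M_i^S=\left(\begin{smallmatrix}0&*\\ *&\lambda_{i^*}\end{smallmatrix}\right)$ with $*\geq 0$. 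In case (2) (so $x_2^*=x_3$): $N_{i2}^2=0$ for $i\geq 4$, the coefficients of $x_2x_2^*=I+s(x_2+x_3)$ give $N_{22}^2=N_{23}^2=N_{23}^3=s$, while $N_{22}^3=N_{22}^{2^*}=\tau(x_2^3)$; transposing (recall $M_{i^*}^S$ is the transpose of $M_i^S$) one gets $M_1^S=I_2$, $M_2^S=\left(\begin{smallmatrix}s&s\\ \tau(x_2^3)&s\end{smallmatrix}\right)$, $M_3^S=(M_2^S)^{\mathrm t}$, and again $M_i^S=\left(\begin{smallmatrix}0&*\\ *&\lambda_{i^*}\end{smallmatrix}\right)$ for $i\geq 4$. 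The crucial common feature is that the upper-left entry of $M_i^S$ vanishes for every $i\geq 4$; since $b\geq 1$, after twisting by $P$ on $b$ slots and forming the Hadamard product of the $n$ matrices, each such $x_i$ contributes $0$ on the diagonal of $T$ and a non-negative amount off the diagonal. Collecting the $i\in\{1,2,3\}$ contributions (absorbing the non-negative tails into $E,E'\geq 0$) gives
\begin{align*}
\text{case }(1):\quad & T=\begin{pmatrix}1+\tfrac{s^at^b}{d_2^{n-2}}+\tfrac{\ell^ak^b}{d_3^{n-2}} & C+E\\ C+E' & 1+\tfrac{s^bt^a}{d_2^{n-2}}+\tfrac{\ell^bk^a}{d_3^{n-2}}\end{pmatrix},\quad C=\tfrac{\ell^n}{d_2^{n-2}}+\tfrac{t^n}{d_3^{n-2}};\\
\text{case }(2):\quad & T=\begin{pmatrix}1+\tfrac{2s^n}{d_2^{n-2}} & Q+E\\ Q+E' & 1+\tfrac{2s^n}{d_2^{n-2}}\end{pmatrix},\quad Q=\tfrac{s^a\tau(x_2^3)^b+s^b\tau(x_2^3)^a}{d_2^{n-2}}.
\end{align*}
In case (1), $\det T\leq AB-C^2$ where $A,B$ are the diagonal entries, and \eqref{eq:AB local} is precisely $AB-C^2<0$. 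In case (2), with $D$ the common diagonal entry, $\det T\leq D^2-Q^2=(D-Q)(D+Q)$, and $D-Q$ equals the left-hand side of \eqref{eq:AA* local}, hence is negative, while $D+Q>0$. In both cases $\det T<0$ contradicts $T\geq 0$, so $\mathcal R$ admits no unitary categorification.

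The step I expect to be the main obstacle is the bookkeeping just sketched: one must verify, from the two given relations and reciprocity alone, the exact value or at least the sign of every entry of every $M_i^S$ that can survive the twisted Hadamard product, and in particular that all contributions coming from unknown data — the basis elements $x_i$ with $i\geq 4$, and the fusion coefficients not pinned down by the hypotheses — fall on the off-diagonal of $T$ with a non-negative sign, so that they can only decrease $\det T$. This is exactly what makes the criterion effective with sparse data, and it is why the conclusions take the form $\det T\le\cdots$ rather than equalities. Isolating the dichotomy $s=0$ versus $x_2^*=x_3$ at the very start of case (2) is likewise essential, since without it the stated inequality \eqref{eq:AA* local} would be vacuous.
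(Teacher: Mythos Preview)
Your proposal is correct and follows essentially the same route as the paper: localize at $S=\{2,3\}$, apply the reduced twisted criterion with $a$ identity twists and $b$ flip twists, observe that the unknown $x_i$ ($i\ge 4$) contribute only non-negative off-diagonal terms because their $(1,1)$ and $(2,2)$ Hadamard entries vanish, and conclude by a $2\times 2$ determinant. Your treatment is in fact slightly more careful than the paper's in two places: you justify via Frobenius reciprocity the dichotomy in case~(2) (either $s=0$, making \eqref{eq:AA* local} vacuous, or $x_2^*=x_3$ with $s=\ell=t=k$), which the paper simply asserts; and you write $\lambda_{i^*}$ rather than $\lambda_i$ in $M_i^S$, which is the correct entry. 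One cosmetic remark: your off-diagonal remainders $E,E'$ are in fact equal, since $(M_i^S)^{\mathrm t}=M_{i^*}^S$, $d_i=d_{i^*}$, and $\{x_i:i\ge4\}$ is $*$-closed, so pairing $i$ with $i^*$ symmetrizes the tail; but your inequality $\det T\le AB-C^2$ holds either way.
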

\begin{proof}
(1)
If $x_2$ is self-dual, then $x_3$ is also self-dual.
Take the local data $S=\{2,3\}$. 
Then
\begin{align*}
M_1^S=\begin{pmatrix}
1&0\\0&1
\end{pmatrix},M_2^S=\begin{pmatrix}
s&\ell\\ \ell&t
\end{pmatrix},
M_3^S=\begin{pmatrix}
\ell&t
\\t&k
\end{pmatrix},
M_{i}^S=\begin{pmatrix}
0&\ast \\
\ast&\lambda_i
\end{pmatrix},\quad i\geq4.
\end{align*}
Let $U_1=\cdots=U_a=I$, $U_{a+1}=\cdots=U_n=\begin{pmatrix}
0&1 \\1 &0
\end{pmatrix}$.
By the assumption that $a, b \geq 1$ and  $\{x_i: 4\leq i\leq m\}=\{x_i^*: 4\leq i\leq m\}$, we have 
\begin{align*}
\sum_{i=4}^m \frac{1}{d_i^{n-2}} (U_j M_{i}^S U_j^*)^{\ast_{j=1}^n}=
\begin{pmatrix}
0&\lambda \\\lambda &0
\end{pmatrix}
\end{align*}
for some $\lambda >0$.
Applying Theorem \ref{thm:hadamard criterion} to $\mathcal{R}$, we have
\begin{align*}
&\det\bigg(\sum_{i=1}^m \frac{1}{d_i^{n-2}} (U_j M_{i}^S U_j^*)^{\ast_{j=1}^n}\bigg)\\
\leq &\det\bigg[ \begin{pmatrix}
1&0\\0&1
\end{pmatrix}
+\frac{1}{d_2^{n-2}}\begin{pmatrix}
s^a t^b&\ell^n\\ \ell^n&s^b t^a
\end{pmatrix}+
\frac{1}{d_3^{n-2}}\begin{pmatrix}
\ell^a k^b&t^n\\t^n&\ell^b k^a
\end{pmatrix} + \begin{pmatrix}
0&\lambda \\\lambda &0
\end{pmatrix}\bigg] \\
\leq & \bigg(\frac{s^a t^b}{d_2^{n-2}}+\frac{\ell^a k^b}{d_3^{n-2}}+1\bigg)\bigg(\frac{s^b t^a}{d_2^{n-2}}+\frac{\ell^b k^a}{d_3^{n-2}}+1\bigg)-\bigg(\frac{\ell^n}{d_2^{n-2}}+\frac{t^n}{d_3^{n-2}}\bigg)^2 \\
<&0.\quad\quad\quad \quad\quad\text{(From inequality \eqref{eq:AB local})}
\end{align*}
Hence $\mathcal{R}$ admits no unitary categorification.

(2) If $x_2$ is not self-dual, then $x_3=x_2^*$ and $s=k=t=\ell$. Take the local data $S=\{2, 3\}$. 
Then
\begin{align*}
M_1^S=\begin{pmatrix}
1&0\\0&1
\end{pmatrix},M_2^S=\begin{pmatrix}
s&\tau(x_2^3)\\ s&s
\end{pmatrix},M_{3}^S=\begin{pmatrix}
s&s\\ \tau(x_2^3)&s
\end{pmatrix},M_{i}^S=\begin{pmatrix}
0&\ast\\\ast&\lambda_i
\end{pmatrix},\quad i\geq4.
\end{align*}
With the same process, we could obtain the conclusion.
\end{proof}

\begin{remark}
In the proof of Theorem \ref{thm:two special simple object fusion ring}, the unknown data $\lambda_i$ is removed by the following equation:
\begin{align*}
 (U_j M_{i}^S U_j^*)^{\ast_{j=1}^n}=
\begin{pmatrix}
0&* \\
*&0
\end{pmatrix}.
\end{align*}
So the reduced twist criteria allow us to check fusion rings with unknown data, which is superior to the Schur product criterion.
 \end{remark}

In the following, we give an example of a family of fusion rings excluded by reduced twisted criteria.
\begin{theorem}\label{thm:R4k}
Suppose $\mathcal{R}_{4,k}$ is a ring generated by the following $4$ matrices:
\begin{align*}
\begin{pmatrix}
1&0&0&0\\
0&1&0&0\\
0&0&1&0\\
0&0&0&1
\end{pmatrix},
\begin{pmatrix}
0&1&0&0\\
1&k&0&1\\
0&0&k&1\\
0&1&1&k
\end{pmatrix},
\begin{pmatrix}
0&0&1&0\\
0&0&k&1\\
1&k&0&0\\
0&1&0&k
\end{pmatrix},
\begin{pmatrix}
0&0&0&1\\
0&1&1&k\\
0&1&0&k\\
1&k&k&1
\end{pmatrix}.
\end{align*}
Then $\mathcal{R}_{4,k}$ is a simple fusion ring admitting no unitary categorification for any $k\geq3$.
\end{theorem}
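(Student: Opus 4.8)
The plan is to verify that $\mathcal{R}_{4,k}$ is a simple fusion ring and then to obstruct its unitary categorification by the reduced twisted criterion (Theorem \ref{thm:hadamard criterion}) applied to the two‑point local set $S=\{x_2,x_3\}$ with $n=3$. That $\mathcal{R}_{4,k}$ is a fusion ring is a finite check: the displayed matrices are the fusion matrices $M_1=I,M_2,M_3,M_4$, so all $N_{ij}^k\in\mathbb{N}$; $x_1=1$ is a basis element; the first column of each $M_i$ shows $N_{ij}^1=\delta_{ij}$, hence $\tau(x_ix_j)=\delta_{ij}$, every $x_i$ is self‑dual (so the involution is the identity and the $M_i$ are symmetric); and associativity $M_iM_j=\sum_k N_{ij}^kM_k$ holds by direct computation. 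It is simple because, using $x_2x_2=1+kx_2+x_4$, $x_3x_3=1+kx_2$, $x_4x_4=1+kx_2+kx_3+x_4$ and $x_2x_4=x_2+x_3+kx_4$, no subset $T\subseteq\{1,2,3,4\}$ with $1\in T$ and $2\le|T|\le 3$ is closed under multiplication, so there is no proper nontrivial fusion subring.

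For the obstruction, write $d_2,d_3,d_4$ for the Frobenius--Perron dimensions; as the $M_i$ are symmetric, $\|M_i\|=d_i$, and $(1,d_2,d_3,d_4)$ is the common Perron eigenvector. Reading off rows of $M_2$ and $M_3$ gives $d_3^2=1+kd_2$, $kd_3+d_4=d_2d_3$ and $d_2+d_3+kd_4=d_2d_4$; hence $d_4=d_3(d_2-k)$, $d_4^2=d_3(d_2+d_3)$ and $d_2-d_3=d_3/d_4$. Ordering $S=\{x_2,x_3\}$ as $(x_2,x_3)$ one computes $M_1^S=I_2$, $M_2^S=kI_2$, $M_3^S=k\left(\begin{smallmatrix}0&1\\1&0\end{smallmatrix}\right)$, $M_4^S=\left(\begin{smallmatrix}1&1\\1&0\end{smallmatrix}\right)$. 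Now apply Theorem \ref{thm:hadamard criterion} with $n=3$, $U_1=U_2=I_2$ and $U_3=\left(\begin{smallmatrix}0&1\\1&0\end{smallmatrix}\right)$: the conjugations fix $M_1^S,M_2^S,M_3^S$, and the Hadamard product of $M_4^S$, $M_4^S$, $U_3M_4^SU_3^*$ equals $\left(\begin{smallmatrix}0&1\\1&0\end{smallmatrix}\right)$, so the criterion matrix is
\begin{equation*}
\begin{pmatrix} 1+\dfrac{k^3}{d_2} & \dfrac{k^3}{d_3}+\dfrac{1}{d_4}\\[12pt] \dfrac{k^3}{d_3}+\dfrac{1}{d_4} & 1+\dfrac{k^3}{d_2}\end{pmatrix},
\end{equation*}
whose smaller eigenvalue is $\left(1+\tfrac{k^3}{d_2}\right)-\left(\tfrac{k^3}{d_3}+\tfrac{1}{d_4}\right)$; it suffices to show this is negative.

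Using $d_2-d_3=d_3/d_4$, negativity is equivalent to $k^3>d_2(d_4-1)$. Setting $\epsilon:=d_2-k$, the Perron relations force $\epsilon>1$, $d_3=\frac{k+\epsilon}{\epsilon^2-1}=k+\epsilon-\frac1\epsilon$, whence $(k+\epsilon)(\epsilon^2-2)=\epsilon-\frac1\epsilon$ and $d_2d_4=\epsilon(\epsilon^2-1)(k^2+k\epsilon+1)$; thus we must prove $\epsilon(\epsilon^2-1)(k^2+k\epsilon+1)<k^3+k+\epsilon$. The function $\epsilon\mapsto(k+\epsilon)(\epsilon^2-2)-\epsilon+\tfrac1\epsilon$ is strictly increasing on $[1,\infty)$, negative at $\epsilon=1$ and positive at $\epsilon=\tfrac32$ for every $k\ge3$, so $1<\epsilon<\tfrac32$. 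Then $\epsilon(\epsilon^2-1)<\tfrac{15}{8}$ (monotonicity of $t\mapsto t^3-t$ on $[1,\infty)$) and $k^2+k\epsilon+1<k^2+\tfrac32k+1$, while $k^3+k+\epsilon>k^3+k+1$, so it is enough that $\tfrac{15}{8}(k^2+\tfrac32k+1)\le k^3+k+1$, i.e. $16k^3-30k^2-29k-14>0$, which holds for all $k\ge3$. Hence $\mathcal{R}_{4,k}$ fails the reduced twisted criterion and admits no unitary categorification.

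The main obstacle is exactly this last uniform estimate: the margin at $k=3$ is thin, so one must retain the $\tfrac1{d_4}$ contribution coming from $M_4^S$ (the coarser Theorem \ref{thm:two special simple object fusion ring}(1), which discards it, is not by itself enough there) and one must pin $\epsilon$ strictly below $\tfrac32$ uniformly in $k$; once the Perron relations are packaged as $(k+\epsilon)(\epsilon^2-2)=\epsilon-\tfrac1\epsilon$ this becomes an elementary monotonicity argument.
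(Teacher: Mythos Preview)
Your proof is correct. You and the paper both apply the reduced twisted criterion on $S=\{x_2,x_3\}$ with $n=3$ and twist $U_3=\left(\begin{smallmatrix}0&1\\1&0\end{smallmatrix}\right)$, but the analytic execution differs. The paper invokes Theorem~\ref{thm:two special simple object fusion ring}(1) (after swapping $x_2,x_3$), which discards the $M_4^S$ contribution; it then reduces to $f(d_3)=d_3^3-k^3d_3^2+(k^4-1)d_3+k^3<0$, bounds $d_3\in[\sqrt{k^2+k+1},\,k+1]$, shows $f$ is decreasing there and $f(\sqrt{k^2+k+1})<0$ for $k\ge5$, and disposes of $k=3,4$ by direct computation. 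You instead retain the $\tfrac{1}{d_4}$ term from $M_4^S$, so your target becomes $k^3>d_2(d_4-1)$ rather than $k^3>d_2d_4$; parametrizing by $\epsilon=d_2-k$ and using $(k+\epsilon)(\epsilon^2-2)=\epsilon-\tfrac1\epsilon$ to pin $1<\epsilon<\tfrac32$ yields a single polynomial inequality $16k^3-30k^2-29k-14>0$ valid for all $k\ge3$. What you gain is uniformity: with your crude bounds $\epsilon(\epsilon^2-1)<\tfrac{15}{8}$ and $k\epsilon<\tfrac32 k$, the discarded version $k^3>d_2d_4$ would require $\tfrac{15}{8}(k^2+\tfrac32 k+1)<k^3$, which \emph{fails} at $k=3$; the extra $k+\epsilon>k+1$ on the right, coming precisely from the $\tfrac{1}{d_4}$ term you kept, is what closes the gap without a separate numerical check. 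One small wording caveat: Theorem~\ref{thm:two special simple object fusion ring}(1) does in fact succeed at $k=3$ with the exact Perron data (as the paper's direct computation shows); it is only your uniform estimates that need the sharper inequality there.
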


\begin{proof}
By computations, we have the following fusion rules:
\begin{align*}
x_2^2&=1+kx_2+x_4,\\
x_3^2&=1+kx_2,\\
x_4^2&=1+k(x_2+x_3)+x_4,\\
x_2x_3&=kx_3+x_4,\\
x_3x_4&=x_2+kx_4,\\
x_2x_4&=x_2+x_3+kx_4.
\end{align*}
Take local data S=$\{2,3\}$. Then $\mathcal{R}_{4,k}$
satisfies the condition in Theorem \ref{thm:two special simple object fusion ring} by switching $x_2$ and $x_3$.
By the Collatz-Wielandt formula for non-negative matrix, we have $d_3=\|x_3\|\leq k+1\leq\|x_2\|=d_2$. 
Then
\begin{align*}
\sqrt{k^2+k+1}\leq \|x_3\|\leq k+1.
\end{align*}
Inequality \eqref{eq:AB local} ($n=3$) is equivalent to
\begin{align*}
f(\|x_3\|)=\|x_3\|^3-k^3\|x_3\|^2+(k^4-1)\|x_3\|+k^3<0.
\end{align*}
Note that $f'(x)<0$ for any $x\in[\sqrt{k^2+k+1},k+1]$ when $k\geq3$.
It is not difficult to check that $f(\sqrt{k^2+k+1})<0$ when $k\geq 5$. So $f(x)<0$
for any $x\in[\sqrt{k^2+k+1},k+1]$ when $k\geq5$.
Thus,
 $f(\|x_3\|)<0$ and $\mathcal{R}_{4,k}$ admits no unitary categorification when $k\geq 5$.
On the other hand, $\mathcal{R}_{4,3}$ and $\mathcal{R}_{4,4}$ could be excluded by direct computations.
\end{proof}
\begin{remark}
The reduced twist criteria make it applicable to exclude a family of infinitely many simple fusion rings. This is not easy 
to complete by the Schur product criterion since 
 the  character table of $\mathcal{R}_{4,k}$ are very complex and it is hard to compute.
\end{remark}
\subsection{Inequivalence of Localized Criteria}\label{sec:locineq}
Let $\mathcal{R}$ be any one of the five fusion rings of rank 8 in subsection \ref{sub:S8}.
Recall that $S=\{x_7,x_8\}$.
Applying Theorem \ref{thm:hadamard criterion}
for $n=3$, $U_1=U_2=I$, $U_3=\begin{pmatrix}
0&1\\1&0
\end{pmatrix}$, we obtain
\begin{align*}
\sum_{i=1}^8\frac{1}{\|M_i\|} (U_j M_i^S U_j^*)^{\ast_{j=1}^3}=\frac{1}{330}\begin{pmatrix}
4286&4101\\
4101&3736
\end{pmatrix}.
\end{align*}
It is not positive since its determinant $\simeq -7.4$. 
Hence we see that these five fusion rings admit no unitary categorifications by reduced twisted criteria.
For $n=4$, $U_1=U_2=I$, $U_3=U_4=\begin{pmatrix}
0&1\\1&0
\end{pmatrix}$, we have that
\begin{align*}
\sum_{i=1}^8\frac{1}{\|M_i\|^2} (U_j M_i^S U_j^*)^{\ast_{j=1}^4}\simeq\begin{pmatrix}
4.85&4.88\\
4.88&4.85
\end{pmatrix}.
\end{align*}
It is not positive either. 

Now let us consider the limit of $\displaystyle \sum_{i=1}^n\frac{1}{\|M_i\|^2} (U_j M_i^S U_j^*)^{\ast_{j=1}^n}$ when $n$ goes to infinity.
\begin{proposition}
Suppose $\mathcal{R}$ is a fusion ring with basis $\{x_1=I,x_2,\ldots,x_m\}$  and $M_i$ is the fusion matrix of $x_i$. Let $S\subseteq \{1,2,\ldots,m\}$ be a subset. 
Define $L_i^S=(a_{kj})_{k,j=1}^m \in M_m(\mathbb{C})$ such that
\begin{align*}
a_{kj}=\begin{cases}
1&\text{if $j$ or $k\in S$ and $N_{i,j}^k=\|M_i\|$},\\
0&\text{otherwise}.
\end{cases}
\end{align*}
If $\mathcal{R}$ admits a unitary categorification, then
\begin{align}\label{eq:limit criterion}
\sum_{i=1}^m \|M_i\|^2 L_i^S\geq0.
\end{align}
\end{proposition}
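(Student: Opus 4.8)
The plan is to deduce Inequality~\eqref{eq:limit criterion} as the $n\to\infty$ limit of the reduced localized criterion of Theorem~\ref{thm:hadamard criterion}. First I would apply that theorem to the given local set $S$ with all twisting unitaries taken to be the identity, $U_1=\cdots=U_n=I_{|S|}$: since $\mathcal{R}$ admits a unitary categorification, this yields
\begin{align*}
\sum_{i=1}^m \|M_i\|^2\bigg(\frac{M_i^S}{\|M_i\|}\bigg)^{\ast n}\geq 0\qquad\text{for every }n\geq 1,
\end{align*}
where $\ast$ is the Hadamard product. The remaining task is to show that the left-hand side converges, entrywise as $n\to\infty$, to $\sum_{i=1}^m\|M_i\|^2 L_i^S$, and then to invoke the closedness of the cone of positive semidefinite matrices.

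For the convergence I would first record the elementary bound $0\le N_{i,j}^k\le\|M_i\|$: no entry of a matrix exceeds, in absolute value, its operator norm, and $\|M_i\|>0$ since $M_i\neq 0$. Hence every entry of $M_i^S/\|M_i\|$ lies in $[0,1]$, and the $(k,j)$-entry of $(M_i^S/\|M_i\|)^{\ast n}$ equals $(N_{i,j}^k/\|M_i\|)^n$. As $n\to\infty$ this tends to $1$ when $N_{i,j}^k=\|M_i\|$ and to $0$ when $N_{i,j}^k<\|M_i\|$; thus $(M_i^S/\|M_i\|)^{\ast n}\to L_i^S$ entrywise, where $L_i^S$ is precisely the $0$--$1$ matrix described in the statement. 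Since the sum over $i$ is finite, $\sum_{i=1}^m\|M_i\|^2(M_i^S/\|M_i\|)^{\ast n}\to\sum_{i=1}^m\|M_i\|^2 L_i^S$ as $n\to\infty$.

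Finally, each term of this convergent sequence of matrices is positive semidefinite by the displayed reduced criterion, and the set of positive semidefinite matrices of a fixed size is closed under entrywise limits; therefore $\sum_{i=1}^m\|M_i\|^2 L_i^S\geq 0$, which is Inequality~\eqref{eq:limit criterion}. If one prefers to regard $L_i^S$ as an $m\times m$ matrix by padding with zeros on the rows and columns outside $S$, this changes nothing, since simultaneously bordering a positive semidefinite matrix by zero rows and columns preserves positivity.

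There is no genuinely hard step here: the argument is a soft limiting consequence of Theorem~\ref{thm:hadamard criterion}. The only point that needs attention is the identification of the entrywise limit of the Hadamard powers, which rests on the observation that each normalized fusion coefficient $N_{i,j}^k/\|M_i\|$ lies in $[0,1]$ and is strictly less than $1$ exactly when $N_{i,j}^k\neq\|M_i\|$; everything else is the closedness of the positive semidefinite cone together with the finiteness of the sum over $i$.
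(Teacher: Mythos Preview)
Your proposal is correct and follows essentially the same approach as the paper: apply Theorem~\ref{thm:hadamard criterion} with all $U_j=I$, use the bound $N_{i,j}^k\le\|M_i\|$ to identify the entrywise limit of the Hadamard powers as $L_i^S$, and conclude by closedness of the positive semidefinite cone. You simply spell out in more detail what the paper compresses into one displayed line.
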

\begin{proof}
Note that $N_{i,j}^k\leq\|M_i\|$. Let $U_i=I$.
Theorem \ref{thm:hadamard criterion} implies that
\begin{align*}
\sum_{i=1}^m \|M_i\|^2 L_i^S=\lim_{n\to \infty}\sum_{i=1}^m \|M_i\|^2  \bigg(\frac{ M_i^S}{\|M_i\|}\bigg)^{\ast n} \geq 0.
\end{align*}
This completes the proof of the proposition.
\end{proof}

Note that for any unitary matrices $U_j$, we have
\begin{align*}
\lim_{n\to \infty}\sum_{i=1}^8\frac{1}{\|M_i\|^{n-2}} (U_j M_i^S U_j^*)^{\ast_{j=1}^n}=I.
\end{align*}
We see that it is positive when $n$ is sufficiently large. 
This implies localized criteria are not all equivalent.
\section{Multifusion rings and principal graphs}\label{sec:multifusion ring and principal graph}
The Grothendieck ring of a multifusion category is a multifusion ring.
We could ask whether a multifusion ring is a Grothendieck ring of a multifusion category.
By similar arguments, we see that the criteria obtained in this paper are true for unitary multifusion categorification of a multifusion ring.
The principal graph of a subfactor provides partial data of the multifusion ring of its bimodule category, see \S \ref{Sec: Principal Graph} in the Appendix. We apply the localized criteria to provide obstruction of bipartite graphs from being principle graphs for subfactors of finite depth.

\subsection{Results and Examples}
By a similar argument in the proof of Theorem \ref{thm:Higher Positivity Criterion}, we have the following result.
\begin{theorem}\label{thm:primary criteria multifusion ring}
Suppose $\mathcal{R}$ is a multifusion ring with basis $\{x_1, x_2,\ldots,x_m\}$  and $M_j$ is the fusion matrix of $x_j$. 
If $\mathcal{R}$ admits a unitary multifusion categorification, then for any $n \geq 1$, we have
\begin{align}\
\sum_{j=1}^m \|M_j\|^2\bigg(\frac{M_j}{\|M_j\|}\bigg)^{\otimes n}\geq 0.
\end{align}
\end{theorem}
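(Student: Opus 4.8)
The plan is to rerun the proof of Theorem~\ref{thm:Higher Positivity Criterion} with ``fusion ring'' replaced everywhere by ``multifusion ring'', recording the few places where the weaker hypothesis on the unit — that $1$ is an $\mathbb{N}$-combination of several basis elements rather than a single one — needs a separate check. Suppose $\mathcal{R}$ admits a unitary multifusion categorification $\mathscr{C}$. As in \S\ref{Sec: Quantum Double}, the quantum double construction applied to $\mathscr{C}\otimes\overline{\mathscr{C}}$ yields a subfactor planar algebra $\mathscr{P}$ together with an identification of $\mathscr{P}_{2,-}$ with $\mathcal{B}:=\mathcal{R}\otimes_\mathbb{Z}\mathbb{C}$ carrying $x_j$ to $\mathfrak{F}(\beta_j)$, under which the quantum dimension of $x_j$ equals $\|M_j\|$ and the Markov trace matches the trace of $\mathcal{R}$. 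Confirming that this dictionary survives the passage from fusion to multifusion categories (several vacuum sectors, $1=\sum_i 1_i$) is the one genuinely categorical point, and is where I would be most careful.

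Granting it, Theorem~\ref{thm:completely positive comultiplication} applies without change: the comultiplication $\Delta\colon\mathscr{P}_{2,-}\to\mathscr{P}_{2,-}\otimes\mathscr{P}_{2,-}$ is completely positive, being up to a scalar a $*$-isomorphism followed by a conditional expectation. Transporting it through the identification and repeating the computation of~\eqref{eq: 3}, $\Delta$ becomes the map $\mathcal{B}\to\mathcal{B}\otimes\mathcal{B}$, $x_j\mapsto\|M_j\|^{-1}x_j\otimes x_j$, whence the higher comultiplications $\Delta^{(n)}\colon\mathcal{B}\to\mathcal{B}^{\otimes(n+1)}$ are completely positive as well, being compositions of maps of the form $\mathrm{id}^{\otimes a}\otimes\Delta\otimes\mathrm{id}^{\otimes b}$. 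Note that this step does not use the ``Jones projection'' axiom of a fusion bialgebra — which is precisely the axiom that fails in the multifusion case — since the arguments of \S\ref{subsec:Fourier Multiplier and Complete Positivity}, namely Propositions~\ref{prop:2 positive} and~\ref{Prop: 1}, only use that $\mathcal{A}$ is finite dimensional with a faithful trace.

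It remains to run the argument of Theorem~\ref{thm:complete positive primary matrix} in this setting, i.e.\ to exhibit the asserted inequality as the positivity of $\Delta^{(n-1)}(e)$ with $e:=\sum_{j=1}^m\|M_j\|\,x_j$. Exactly as in Proposition~\ref{prop:1 positivity}, $e=e^*$ because $\|M_{j^*}\|=\|M_j\|$, and $e^2=\FPdim(\mathcal{R})\,e$ because $M_i\vec d=\|M_i\|\,\vec d$ with $\vec d=(\|M_1\|,\dots,\|M_m\|)$, the latter following from Frobenius reciprocity together with $d_i=d_{i^*}$; hence $e\ge 0$. Since $\Delta^{(n-1)}$ is a positive map, $\Delta^{(n-1)}(e)=\sum_{j=1}^m\|M_j\|^{-(n-2)}x_j^{\otimes n}\ge 0$, and writing this operator in the basis $\{x_{k_1}\otimes\cdots\otimes x_{k_n}\}$ gives precisely $\sum_{j=1}^m\|M_j\|^2\big(M_j/\|M_j\|\big)^{\otimes n}\ge 0$, as wanted. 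The only real obstacle is the multifusion quantum-double dictionary invoked at the start; the planar and analytic input is hypothesis-free, and everything else is the bookkeeping just described.
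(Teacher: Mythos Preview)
Your proposal is correct and follows essentially the same approach as the paper, which simply states ``By a similar argument in the proof of Theorem~\ref{thm:Higher Positivity Criterion}'' without further detail. You have carefully fleshed out that argument, correctly identifying that the only genuinely new input is the multifusion quantum-double dictionary (acknowledged at the end of \S\ref{Sec: Quantum Double}) and that the analytic machinery of \S\ref{subsec:Fourier Multiplier and Complete Positivity} and Theorem~\ref{thm:complete positive primary matrix} never uses the Jones-projection axiom, which is exactly the point where fusion and multifusion bialgebras diverge.
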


All localized criteria for unitary categorification of fusion rings obtained before in this paper are true for unitary multifusion
 categorification of a multifusion ring.
Hence the localized positivity are obstructions for constructing principal graphs of subfactors of finite depth.
Applying the idea of localization from Theorem \ref{thm:localization criterion}, we have
\begin{theorem}\label{thm:local criteria multifusion ring}
Suppose $\mathcal{R}$ is a multifusion ring with basis $\{x_1, x_2,\ldots,x_m\}$  and $M_j$ is the fusion matrix of $x_j$. 
Let $S\subseteq\{1,2,\cdots,m\}$ be a subset. 
If $\mathcal{R}$ admits a unitary multifusion categorification, then for any $n \geq 1$, we have
\begin{align}\label{eq:multi local}
\sum_{j} d_j^2\bigg(\frac{M_j^S}{d_j}\bigg)^{\otimes n}\geq 0,
\end{align}
where $d_j$ is the quantum dimension of $x_j$ for $j=1, \ldots .$
\end{theorem}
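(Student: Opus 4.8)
The plan is to obtain Theorem~\ref{thm:local criteria multifusion ring} from the non-localized multifusion primary criterion of Theorem~\ref{thm:primary criteria multifusion ring} by exactly the compression argument that takes Theorem~\ref{thm:Higher Positivity Criterion} to Theorem~\ref{thm:localization criterion}. Thus the only things to supply are the projection step and the bookkeeping identifying $\|M_j\|$ with the quantum dimension $d_j$.

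First I would fix the coordinate projection $P_S\colon\mathbb{C}^m\to\mathbb{C}^{|S|}$ onto the indices in $S$ and record two elementary facts: $P_S M_j P_S^{*}=M_j^S$ for every $j$, so that $P_S^{\otimes n}\,M_j^{\otimes n}\,P_S^{*\otimes n}=(M_j^S)^{\otimes n}$; and compression by $P_S^{\otimes n}$ carries positive operators to positive operators. Applying $P_S^{\otimes n}(\,\cdot\,)P_S^{*\otimes n}$ to the positive operator furnished by Theorem~\ref{thm:primary criteria multifusion ring} gives
\[
\sum_{j}\|M_j\|^{2}\Bigl(\tfrac{M_j^S}{\|M_j\|}\Bigr)^{\otimes n}
=P_S^{\otimes n}\Bigl(\sum_{j}\|M_j\|^{2}\bigl(\tfrac{M_j}{\|M_j\|}\bigr)^{\otimes n}\Bigr)P_S^{*\otimes n}\ \ge\ 0 .
\]
It then remains only to replace $\|M_j\|$ by $d_j$. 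From the anti-involution axiom one has $M_{j^{*}}=M_j^{\mathrm{T}}$, hence $M_jM_j^{*}=M_jM_{j^{*}}$ is the symmetric, entrywise nonnegative fusion matrix of $x_jx_{j^{*}}$; the strictly positive dimension vector $\vec d=(d_k)_k$ is an eigenvector of this matrix with eigenvalue $d_jd_{j^{*}}=d_j^{2}$, so Perron--Frobenius forces $\|M_j\|^{2}=\rho(M_jM_j^{*})=d_j^{2}$. When $\mathcal R$ admits a unitary multifusion categorification the $d_j$ are the categorical quantum dimensions, which agree with $\FPdim(x_j)$ by pseudo-unitarity; substituting $\|M_j\|=d_j$ turns the displayed inequality into \eqref{eq:multi local}.

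I do not expect a genuine obstacle at this level: the real work has already been absorbed into Theorem~\ref{thm:primary criteria multifusion ring}, which requires the multifusion analogue of the chain behind Theorem~\ref{thm:Higher Positivity Criterion} --- that a unitary multifusion categorification $\mathscr{C}$ subfactorizes its Grothendieck ring via a quantum-double-type construction on $\mathscr{C}\boxtimes\overline{\mathscr{C}}$, with due attention to the disconnected structure and the several local units of a multifusion category, so that the associated comultiplication on the $2$-box algebra is completely positive and Theorem~\ref{thm:complete positive primary matrix} applies. Granting that input, the content of Theorem~\ref{thm:local criteria multifusion ring} reduces to the routine compression above.
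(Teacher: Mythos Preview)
Your proposal is correct and matches the paper's approach exactly: the paper's proof of Theorem~\ref{thm:local criteria multifusion ring} is just the sentence ``Applying the idea of localization from Theorem~\ref{thm:localization criterion}'', i.e., compress the primary matrix of Theorem~\ref{thm:primary criteria multifusion ring} by $P_S^{\otimes n}$. Your additional paragraph justifying $\|M_j\|=d_j$ via Perron--Frobenius (using $M_{j^*}=M_j^{\mathrm T}$ and the strictly positive dimension vector) is a detail the paper silently assumes when passing from the $\|M_j\|$ notation in Theorem~\ref{thm:primary criteria multifusion ring} to the $d_j$ notation in Theorem~\ref{thm:local criteria multifusion ring}, so it is a welcome bit of extra care rather than a deviation.
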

\begin{example}
Let $D_5$ be the following bipartite graph: $q=e^{\pi i/8}$,
\begin{align}\label{fig:principle graph 1}
\begin{tikzpicture}
\draw(0,0)node{\tiny$\bullet$};
\draw(2,0)node{\tiny$\bullet$};
\draw(4,0)node{\tiny$\bullet$};
\draw(0,-0.3)node{$x_1$};
\draw(2,-0.3)node{$x_2$};
\draw(4,-0.3)node{$x_3$};
\draw[line width=1.5](0,0) --(2,0);
\draw[line width=1.5](2,0) --(4,0);
\draw[line width=1.5](4,0)--(6,1);
\draw[line width=1.5](4,0)--(6,-1);
\draw(6.2,0.6)node{$x_4$};
\draw(6.2,-0.6)node{$x_5$};
\draw(0,0.4)node{$[1]_q$};
\draw(2,0.4)node{$[2]_q$};
\draw(4,0.4)node{$[3]_q$};
\draw(6,1.4)node{$[4]_q/2$};
\draw(6,-1.4)node{$[4]_q/2$};
\begin{scope}
\draw [fill=black] (0, 0) circle (0.07);
\end{scope}
\begin{scope}[shift={(2, 0)}]
\draw [fill=white] (0, 0) circle (0.07);
\end{scope}
\begin{scope}[shift={(4, 0)}]
\draw [fill=black] (0, 0) circle (0.07);
\end{scope}
\begin{scope}[shift={(6, 1)}]
\draw [fill=white] (0, 0) circle (0.07);
\end{scope}
\begin{scope}[shift={(6, -1)}]
\draw [fill=white] (0, 0) circle (0.07);
\end{scope}
\end{tikzpicture}
\end{align}
We take the local set $S=\{x_1,x_4\}$. 
Then
\begin{align*}
T_1^S=\sum_{j=1}^5  d_j M_j^S=\begin{pmatrix}
1&[4]_q/2\\ [4]_q/2&1
\end{pmatrix}
\end{align*}
is not positive. So $D_5$ is not a principal graph of a subfactor.
\end{example}
Applying the idea of Theorem \ref{thm:hadamard criterion}, we further have
\begin{theorem}\label{thm:multifusion hadamard criterion}
Suppose $\mathcal{R}$ is a multifusion ring with basis $\{x_1, x_2,\ldots,x_m\}$  and $M_j$ is the fusion matrix of $x_j$. 
Let $S\subseteq\{1,2,\cdots,m\}$ be a subset, and $U_j\in M_{|S|}(\mathbb{C})$  are unitary matrices.
  If $\mathcal{R}$ admits a unitary multifusion categorification, then for any $n \geq 1$, we have
\begin{align}\label{eq:multi hadamard}
\sum_{i} d_i^2\bigg(\frac{ M_i^S }{d_i}\bigg)^{\ast n} \geq & 0, \quad \text{ Reduced }\\
\sum_{i} d_i^2 U_j^{\otimes_{j=1}^n} \bigg(\frac{M_i^S}{d_i}\bigg)^{\otimes n} (U_j^*)^{\otimes_{j=1}^n}  \geq & 0,  \text{ Twisted } \\
\sum_{i} d_i^2\bigg(\frac{U_j M_i^S U_j^*}{d_i}\bigg)^{\ast_{j=1}^n} \geq & 0,  \text{ Reduced twisted }
\end{align}
where $\ast$ is the Hadamard product of matrices.
\end{theorem}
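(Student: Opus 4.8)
The plan is to run the proof of Theorem \ref{thm:hadamard criterion} essentially verbatim, with the localized multifusion criterion Theorem \ref{thm:local criteria multifusion ring} playing the role that Theorem \ref{thm:localization criterion} played there; the passage from fusion to multifusion rings requires nothing beyond replacing the scalars $\|M_i\|$ by the quantum dimensions $d_i$, which is automatic since Theorem \ref{thm:local criteria multifusion ring} is already stated with $d_i$. Concretely, assume $\mathcal{R}$ admits a unitary multifusion categorification, fix $S\subseteq\{1,\dots,m\}$, unitaries $U_j\in M_{|S|}(\bC)$ and $n\geq 1$, and set $T:=\sum_i d_i^2\big(\tfrac{M_i^S}{d_i}\big)^{\otimes n}\in M_{|S|}(\bC)^{\otimes n}$, which is positive by Theorem \ref{thm:local criteria multifusion ring}.

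First I would obtain the Twisted inequality: $\bigotimes_{j=1}^n U_j$ is a unitary on $M_{|S|}(\bC)^{\otimes n}$, so conjugating $T$ by it preserves positivity, and since termwise $\big(\bigotimes_j U_j\big)\big(\tfrac{M_i^S}{d_i}\big)^{\otimes n}\big(\bigotimes_j U_j^*\big)=\bigotimes_j\big(\tfrac{U_j M_i^S U_j^*}{d_i}\big)$, this yields exactly $\sum_i d_i^2\, U_j^{\otimes_{j=1}^n}\big(\tfrac{M_i^S}{d_i}\big)^{\otimes n}(U_j^*)^{\otimes_{j=1}^n}\geq 0$. Next, for the Reduced inequality, I would set $S^{(n)}:=\{(i,\dots,i):i\in S\}$, let $P_{S^{(n)}}$ be the compression onto the corresponding coordinate subspace of $(\bC^{|S|})^{\otimes n}$ (identified with $\bC^{|S|}$), and invoke the elementary entrywise identity $P_{S^{(n)}}\big(\bigotimes_j A_j\big)P_{S^{(n)}}^*=A_1\ast\cdots\ast A_n$ for $A_j\in M_{|S|}(\bC)$ — the classical fact behind the Schur product theorem. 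Applied summand by summand this gives $\sum_i d_i^2\big(\tfrac{M_i^S}{d_i}\big)^{\ast n}=P_{S^{(n)}}\,T\,P_{S^{(n)}}^*\geq 0$, a compression of a positive operator. The Reduced twisted inequality is then the composite of the two steps: first twist $T$ by $\bigotimes_j U_j$, then compress by $P_{S^{(n)}}$, so that $\sum_i d_i^2\big(\tfrac{U_j M_i^S U_j^*}{d_i}\big)^{\ast_{j=1}^n}=P_{S^{(n)}}\,U_j^{\otimes_{j=1}^n}\,T\,U_j^{*\otimes_{j=1}^n}\,P_{S^{(n)}}^*\geq 0$.

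I expect no genuine obstacle here beyond bookkeeping: the analytic substance is already carried by Theorem \ref{thm:local criteria multifusion ring}, and the only remaining content is the matrix identity $P_{S^{(n)}}\big(\bigotimes_j A_j\big)P_{S^{(n)}}^*=A_1\ast\cdots\ast A_n$, which one reads off by observing that the $\big((i,\dots,i),(k,\dots,k)\big)$ entry of $\bigotimes_j A_j$ equals $\prod_j (A_j)_{ik}$, i.e. the $(i,k)$ entry of the Hadamard product. The one mild point to keep in view throughout is that for multifusion (as opposed to fusion) rings the scalar in front of each tensor power is the quantum dimension $d_i$ rather than $\|M_i\|$, so all three displayed inequalities are phrased with $d_i$; this matches Theorem \ref{thm:local criteria multifusion ring} and hence causes no difficulty.
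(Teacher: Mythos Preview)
Your proposal is correct and follows essentially the same approach as the paper: the paper simply says ``Applying the idea of Theorem \ref{thm:hadamard criterion}, we further have'' this theorem, and you have carried out exactly that transplant, replacing $\|M_i\|$ by $d_i$ and invoking Theorem \ref{thm:local criteria multifusion ring} in place of Theorem \ref{thm:localization criterion}. The only cosmetic difference is that you separate the three inequalities (Twisted, Reduced, Reduced twisted) explicitly, whereas the paper's proof of the fusion version goes straight to the Reduced twisted case via the single identity $\sum_i d_i^2\big(\tfrac{U_j M_i^S U_j^*}{d_i}\big)^{\ast_{j=1}^n}=P_{S^{(n)}}\,U_j^{\otimes_{j=1}^n}\,T\,U_j^{*\otimes_{j=1}^n}\,P_{S^{(n)}}$, from which the other two follow by taking $U_j=I$ or omitting the compression.
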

Note that multifusion categories are multicolored.
For graded multifusion rings, we have
\begin{theorem}
Suppose $\mathcal{R}$ is a $\mathbb{Z}_k$-graded multifusion ring with basis $\{x_1, x_2,\ldots,x_m\}$  and $M_j$ is the fusion matrix of $x_j$. 
Let $S\subseteq\{1,2,\cdots,m\}$ be a subset.
If $\mathcal{R}$ admits a  unitary multifusion categorification, then
\begin{align}\label{eq:multi graded}
\sum_{i=1}^m (-1)^{|i|}d_i^2\bigg(\frac{M_i^S}{d_i}\bigg)^{\otimes n}\geq 0,
\end{align}
where $|i|$ is the grading of $x_i$ and $d_i$ is the quantum dimension of $x_i$ for $i=1, \ldots .$.
\end{theorem}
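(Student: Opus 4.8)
The plan is to follow the proof of the localized criteria (Theorems~\ref{thm:complete positive primary matrix} and~\ref{thm:localization criterion}), replacing the ``Jones projection'' $e=\sum_i d_i x_i$ of Proposition~\ref{prop:1 positivity} by its \emph{graded twist}. Write $(-1)^{|i|}$ for the value at $|i|\in\mathbb{Z}_k$ of the sign character $\mathbb{Z}_k\to\{\pm1\}$ (the trivial character when $k$ is odd), and set
\[
\tilde e:=\sum_{i=1}^m (-1)^{|i|}\, d_i\, x_i\ \in\ \mathcal{B}.
\]
First I would check that $\tilde e$ is a nonnegative multiple of a projection, hence $\tilde e\ge 0$ in $\mathcal{B}$. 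Self-adjointness follows after reindexing $i\mapsto i^*$, using $d_{i^*}=d_i$, $|i^*|=-|i|$ and $(-1)^{-|i|}=(-1)^{|i|}$. For the idempotent relation, recall the Perron--Frobenius identity $\sum_j d_j N_{jk}^l=d_k d_l$ (from $d_l d_{k^*}=\sum_j N_{lk^*}^{j}d_j$ and Frobenius reciprocity $N_{jk}^l=N_{lk^*}^{j}$); because $\mathcal{R}$ is $\mathbb{Z}_k$-graded, $N_{jk}^l\ne 0$ forces $|j|=|l|-|k|$, so
\[
\tilde e\, x_k=\sum_l\Big(\sum_j (-1)^{|j|} d_j N_{jk}^l\Big)x_l=(-1)^{|k|}d_k\sum_l (-1)^{|l|}d_l x_l=(-1)^{|k|}d_k\,\tilde e .
\]
Consequently $\tilde e^{\,2}=\sum_k (-1)^{|k|}d_k\,(\tilde e\,x_k)=\big(\sum_k d_k^2\big)\,\tilde e=\mathrm{FPdim}(\mathcal{R})\,\tilde e$, so $\tilde e$ is a positive multiple of a projection and $\tilde e\ge 0$.

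Next I would invoke complete positivity of the comultiplication. As in the proof of Theorem~\ref{thm:primary criteria multifusion ring} (the multifusion analogue of Theorem~\ref{thm:completely positive comultiplication} and Proposition~\ref{prop:subfactorized imply complete positivity}), a unitary multifusion categorification makes $\Delta\colon\mathcal{B}\to\mathcal{B}\otimes\mathcal{B}$ completely positive; hence, by the multifusion version of Theorem~\ref{thm:complete positive primary matrix}, each higher comultiplication $\Delta^{(n-1)}$ is a positive map, so $\Delta^{(n-1)}(\tilde e)\ge 0$ in $\mathcal{B}^{\otimes n}$. Since $\Delta^{(n-1)}(x_i)=d_i^{-(n-1)}x_i^{\otimes n}$,
\[
\Delta^{(n-1)}(\tilde e)=\sum_{i=1}^m (-1)^{|i|}\,d_i^{\,2-n}\,x_i^{\otimes n},
\]
and passing to the left regular representation of $\mathcal{B}^{\otimes n}$ on $L^2(\mathcal{B})^{\otimes n}$ (in which left multiplication by $x_i^{\otimes n}$ is $M_i^{\otimes n}$) turns positivity of this operator into
\[
\sum_{i=1}^m (-1)^{|i|}\,d_i^{\,2}\Big(\frac{M_i}{d_i}\Big)^{\otimes n}\ \ge\ 0 .
\]
Finally, exactly as in Theorem~\ref{thm:localization criterion}, compressing by $P_S^{\otimes n}$, where $P_S\colon\mathbb{C}^m\to\mathbb{C}^{|S|}$ is the coordinate projection onto $S$, replaces each $M_i$ by the submatrix $M_i^S$ while preserving positivity, which yields \eqref{eq:multi graded}.

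The only genuinely new ingredient — and the step I expect to require the most care — is the verification that the twisted element $\tilde e$ is still positive; everything else is a line-by-line transcription of the ungraded argument. The decisive use of the grading is that the individual signs $(-1)^{|j|}$ attached to the $x_j$ collapse, upon multiplication by $x_k$, to the single sign $(-1)^{|l|}$ on each component $x_l$, so that $\tilde e$ remains an eigenvector for right multiplication by every $x_k$ and hence a scalar multiple of a projection. For $k$ odd the sign character is trivial and the statement degenerates to Theorem~\ref{thm:local criteria multifusion ring}. One should also remark that the same compression-by-unitaries trick of Theorem~\ref{thm:multifusion hadamard criterion} applies verbatim to \eqref{eq:multi graded}, producing reduced and twisted graded criteria as well.
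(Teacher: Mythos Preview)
Your argument is correct and is exactly the approach one would reconstruct from the paper: the paper states this graded theorem without proof, leaving it to be inferred from the preceding ungraded results, and your proposal supplies precisely those details. The key step you single out --- replacing $e=\sum_i d_i x_i$ by the twisted element $\tilde e=\sum_i (-1)^{|i|}d_i x_i$ and checking it is still a nonnegative multiple of a projection via the graded Perron--Frobenius identity --- is the only genuinely new point, and your computation $\tilde e\,x_k=(-1)^{|k|}d_k\,\tilde e$ (hence $\tilde e^{\,2}=\mathrm{FPdim}(\mathcal{R})\,\tilde e$) is correct; the rest is, as you say, a transcription of Theorems~\ref{thm:complete positive primary matrix}--\ref{thm:localization criterion}. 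Your remark that the result is vacuous (i.e.\ collapses to Theorem~\ref{thm:local criteria multifusion ring}) for odd $k$, since the only group homomorphism $\mathbb{Z}_k\to\{\pm1\}$ is then trivial, is also to the point and worth making explicit.
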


\subsection{Bipartite Graphs with Local data Excluded}
In this subsection, we will present a family of bipartite graphs excluded from principal graphs of subfactors by the reduced twisted criteria.
\begin{theorem}\label{thm:infinite principle graph exclude}
Let $\mathcal{G}$ be a bipartite graph of finite depth with the following local form $\ell\geq1$:
\begin{align}\label{fig:principle graph 1}
\raisebox{-1.5cm}{
\begin{tikzpicture}
\draw[line width=1.5](0,0) --(2,0);
\draw(0,-0.3)node{$x_1$};
\draw(2,-0.3)node{$x_2$};
\draw(4,-0.3)node{$x_3$};
\draw (2,0)[line width=1.5] arc (120:60:2);
\draw (2,0)[line width=1.5] arc (-120:-60:2);
\draw(3,0.1)node{\textbf{$\vdots$}};
\draw(3,-0.8)node{$\ell$ arcs};
\draw[line width=1.5](4,0) --(6,0);
\draw(6,-0.3)node{$x_{i}$};
\draw[line width=1.5](4,0) --(6,-0.8);
\draw(6,-1)node{$x_{k+3}$};
\draw(5.5,-0.2)node{\textbf{$\vdots$}};
\draw[line width=1.5](4,0)--(6,1.5);
\draw(6,1.7)node{$x_4$};
\draw(5.5,0.8)node{\textbf{$\vdots$}};
\begin{scope}
\draw [fill=black] (0, 0) circle (0.07);
\end{scope}
\begin{scope}[shift={(2, 0)}]
\draw [fill=white] (0, 0) circle (0.07);
\end{scope}
\begin{scope}[shift={(4, 0)}]
\draw [fill=black] (0, 0) circle (0.07);
\end{scope}
\begin{scope}[shift={(6,-0.8)}]
\draw [fill=white] (0, 0) circle (0.07);
\end{scope}
\begin{scope}[shift={(6,0)}]
\draw [fill=white] (0, 0) circle (0.07);
\end{scope}
\begin{scope}[shift={(6,1.5)}]
\draw [fill=white] (0, 0) circle (0.07);
\end{scope}
\draw(7,0)node{\textbf{$\cdots$}};
\draw(5,1.2)node{$m_1$};
\draw(5,0.2)node{$m_{i-3}$};
\draw(5,-0.6)node{$m_{k}$};
\end{tikzpicture}}
\end{align}
Let $M=\sum_{j=1}^k m_i^2+\ell^2-1$ and $d_2$ is the quantum dimension of $x_2$. 
If there exist $n\geq2$, $a,b\geq1$, $a+b=n$, such that
\begin{align}\label{eq:ab}
\bigg(\frac{\ell^{2a-2}M^b}{(d_2^2-1)^{n-2}}+1  \bigg)
\bigg(\frac{\ell^{2b-2}M^a}{(d_2^2-1)^{n-2}}+1  \bigg)-\frac{\ell^{2n}}{d_2^{2n-4}}<0,
\end{align}
then $\mathcal{G}$ can not be a principal graph of a subfactor.
\end{theorem}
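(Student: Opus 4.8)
The plan is to treat $\mathcal{G}$, under the assumption that it is the principal graph of a finite-depth subfactor $N\subseteq M$, as carrying partial data of the multifusion ring $\mathcal{R}$ of the bimodule category of $N\subseteq M$. Since that category is a unitary multifusion category, $\mathcal{R}$ admits a unitary multifusion categorification, so all the criteria of this section, in particular the reduced twisted criterion of Theorem \ref{thm:multifusion hadamard criterion}, apply to $\mathcal{R}$. The goal is to read off from the displayed local form just enough fusion multiplicities to violate \eqref{eq:multi hadamard} for suitable $n=a+b$.

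First I would extract the local fusion rules. Write $x_2={}_NM_M$ (so $d_2$ is its quantum dimension) and let $x_3$ be the depth-two even vertex joined to $x_2$ by $\ell$ arcs. Frobenius reciprocity applied to the principal graph gives $x_2x_2^{*}=I+\ell x_3$ as $N$-$N$ bimodules (these being all the $N$-$N$ constituents of ${}_NM_N$, by the local form), hence $d_2^{2}=1+\ell d_3$, i.e. $d_2^{2}-1=\ell d_3$. The $\ell$ arcs from $x_2$ to $x_3$ together with the $m_i$ edges from $x_3$ to $x_4,\dots,x_{k+3}$ give $x_3x_2=\ell x_2+\sum_{i=1}^{k}m_i x_{i+3}$, so that $\langle x_3x_2,x_3x_2\rangle=\ell^{2}+\sum_i m_i^{2}=M+1$. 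Rewriting $\langle x_3x_2,x_3x_2\rangle=\langle x_2x_2^{*},x_3^{*}x_3\rangle=1+\ell\langle x_3,x_3^{*}x_3\rangle$ and using $N_{x_3^{*}x_3}^{x_3}=N_{x_3x_3}^{x_3}$ (Frobenius reciprocity, $N_{ij}^{k}=N_{i^{*}k}^{j}$) yields $N_{x_3x_3}^{x_3}=N_{x_3x_3^{*}}^{x_3}=M/\ell$. These are precisely the fusion data of the hypothesis of Theorem \ref{thm:two special simple object fusion ring}(1) with $s=t=0$, with the coefficient of $x_3$ in $x_2x_2^{*}$ equal to $\ell$, with $k=M/\ell$ the coefficient of $x_3$ in $x_3x_3^{*}$, and with $d_3=(d_2^{2}-1)/\ell$; substituting into \eqref{eq:AB local}, and using $(d_2^{2}-1)^{n-2}=\ell^{\,n-2}d_3^{\,n-2}$ together with $2a-n=a-b$, transforms \eqref{eq:AB local} exactly into \eqref{eq:ab}.

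Next I would run the argument of the proof of Theorem \ref{thm:two special simple object fusion ring}(1) inside $\mathcal{R}$: take a two-element local set $S$ adapted to the grading of the bimodule category (built from $x_2$ and a second bimodule in the same graded component, playing the role of the pair $\{x_2,x_3\}$ of the fusion-ring case), apply the reduced twisted criterion with $U_1=\cdots=U_a=I$ and $U_{a+1}=\cdots=U_n=\left(\begin{smallmatrix}0&1\\1&0\end{smallmatrix}\right)$, and evaluate $\sum_{i}\|M_i\|^{2}\big(U_jM_i^{S}U_j^{*}/\|M_i\|\big)^{\ast_{j=1}^{n}}$. The identity components contribute $I$; the vertex $x_3$ contributes the terms $\ell^{a}(M/\ell)^{b}$ and $\ell^{b}(M/\ell)^{a}$ on the diagonal after division by $d_3^{\,n-2}$, and a nonnegative amount at least $\ell^{n}/d_2^{\,n-2}$ off the diagonal; every other basis element, in particular those carrying the still-unknown multiplicities $N_{x_3,x_{i+3}}^{x_{i+3}}$ and the $x_{i+3}$ themselves, contributes either nothing after the Hadamard twist or only nonnegatively off the diagonal. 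Hence the resulting positive $2\times2$ matrix has diagonal entries at most the two factors of \eqref{eq:ab} and off-diagonal entry at least $\ell^{n}/d_2^{\,n-2}$, so its determinant is at most the left-hand side of \eqref{eq:ab}, which is negative by hypothesis — a contradiction. Therefore $\mathcal{G}$ is not a principal graph.

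The delicate point, and the one I expect to be the main obstacle, is this last step carried out honestly in the multifusion setting: the naive local set $\{x_2,x_3\}$ has only diagonal restricted fusion submatrices because of the grading, so $S$ must be chosen so that (i) the identity still contributes $I$ after the swap-twist, (ii) $x_3$ still produces the entries $\ell$ and $M/\ell$ in the two diagonal slots and the right off-diagonal weight, and (iii) all partially unknown multiplicities are pushed off the diagonal and hence only help. Verifying (i)--(iii), and checking that $x_3$ may be taken self-dual (equivalently that ${}_NM_N=I+\ell x_3$ with no further summands, which is forced by the displayed local form), is the essential technical content; once it is in place the claimed inequality follows directly from Theorem \ref{thm:multifusion hadamard criterion}.
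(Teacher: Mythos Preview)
Your approach is the paper's: extract the local fusion data $x_2x_2^{*}=I+\ell x_3$, $d_3=(d_2^2-1)/\ell$, $N_{33}^{3}=M/\ell$, recognize \eqref{eq:ab} as the $s=t=0$, $k=M/\ell$ specialization of \eqref{eq:AB local}, and contradict Theorem~\ref{thm:multifusion hadamard criterion} with $a$ copies of $I$ and $b$ copies of the swap on a two-point local set. Your derivations (in particular the computation $\langle x_3x_2,x_3x_2\rangle=M+1$ giving $N_{33}^{3}=M/\ell$, and the substitution $d_3^{\,n-2}=(d_2^{2}-1)^{n-2}/\ell^{\,n-2}$ turning \eqref{eq:AB local} into \eqref{eq:ab}) are correct and more explicit than the paper, which simply records the two fusion relations.

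The paper resolves your ``delicate point'' in one stroke rather than by searching for an adapted local set: it takes each $x_i$ to be the direct sum of two irreducible bimodules, so that $x_2$ and $x_3$ become self-dual, and then uses $S=\{2,3\}$ directly. With that convention one reads off
\[
M_2^{S}=\begin{pmatrix}0&\ell\\ \ell&0\end{pmatrix},\qquad
M_3^{S}=\begin{pmatrix}\ell&0\\ 0&M/\ell\end{pmatrix},\qquad
M_i^{S}=\begin{pmatrix}0&*\\ *&\lambda_i\end{pmatrix}\ (i\ge4),
\]
the swap-twist annihilates the diagonal of each $i\ge4$ term, and the $2\times2$ determinant bound is exactly the one you wrote down. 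Your diagnosis of the obstruction was slightly off: in the unfolded multifusion ring the failure is not that the restricted matrices are all diagonal but that $M_2^{S}$ vanishes identically (since ${}_NM_M$ does not left-compose with an $N$--$N$ or an $N$--$M$ bimodule); the doubling is precisely what restores the off-diagonal $\ell$'s, so your conditions (i)--(iii) are met automatically and no further work is needed.
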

\begin{proof}
Suppose that $\mathcal{G}$ is a principal graph of a subfactor. 
Let $\mathcal{R}$ be the multifusion ring arising from $\mathcal{G}$, generated by $\{x_1,x_2,\cdots\}$ with  fusion matrices $M_i$,
where $x_i$ is the direct sum of two irreducible bimodules. Let $d_i$ be the quantum dimension of $x_i$.
 Note that $x_2,x_3$ are self-dual. We have
\begin{align*}
  x_2^2&=1+\ell x_3\\
x_3^2&=1+\dfrac{M}{\ell}x_3+\sum_{i\geq k+4}\lambda_i x_i.
\end{align*}
Then $d_3=(d_2^2-1)/\ell$.
Take local data $S=\{2,3\}$, we have
\begin{align*}
M_1^S=\begin{pmatrix}
1&0\\0&1
\end{pmatrix},M_2^S=\begin{pmatrix}
0&\ell\\ \ell&0
\end{pmatrix},
M_3^S=\begin{pmatrix}
\ell&0
\\0&\dfrac{M}{\ell}
\end{pmatrix},
M_{i}^S=\begin{pmatrix}
0&\ast \\
\ast&\lambda_i
\end{pmatrix},\quad i\geq4.
\end{align*}
Let $U_1=\cdots=U_a=\begin{pmatrix}
1&0\\0&1
\end{pmatrix}$ and 
$U_{a+1}=\cdots=U_n=\begin{pmatrix}
0&1\\1&0
\end{pmatrix}$.
Since $\{x_i:i\geq4\}=\{x_i^*:i\geq4\}$, we have
\begin{align*}
\sum_{i\geq 4}\dfrac{1}{d_i^{n-2}}(U_j M_i^S U_j^*)^{\ast_{j=1}^n}=\begin{pmatrix}
0&\lambda\\
\lambda&0
\end{pmatrix}
\end{align*}
for some $\lambda>0$. So from Inequality \eqref{eq:ab},
\begin{align*}
&\det\sum_{i=1}^m \frac{1}{d_i^{n-2}} (U_j M_{i}^S U_j^*)^{\ast_{j=1}^n}
\leq \det\sum_{i=1}^3 \frac{1}{d_i^{n-2}} (U_j M_{i}^S U_j^*)^{\ast_{j=1}^n}<0.
\end{align*}
By Theorem \ref{thm:multifusion hadamard criterion}, $\mathcal{R}$ admits no unitary multifusion categorification. This leads to a contradiction.
\end{proof}

\begin{remark}
The key point in the proof of Theorem \ref{thm:infinite principle graph exclude} is that we use the reduced twist criterion to remove the unknown data $\lambda_i$. So it is practicable to check the positivity of the matrix even some entries are known. This is a big advantage of  the reduced twist criterion.
\end{remark}

\begin{remark}\label{rem:infinite principle graph exclude}
Suppose the bipartite graph $\mathcal{G}$  in Theorem \ref{thm:infinite principle graph exclude}
is a  principal graph of a subfactor with finite index $\delta^2$,  we have $M=\dim_{\mathbb{C}}(\mathscr{P}_{3,\pm})-
\ell^4-\ell^2-2$ and $\dim_{\mathbb{C}}(\mathscr{P}_{2,\pm})=\ell^2+1$. So Theorem \ref{thm:infinite principle graph exclude} indicates that there does not exist a principal graph of a subfactor of index $\delta^2$ such that
\begin{enumerate}
\item  It has the following local graph
\begin{align}
\raisebox{-1cm}{
\begin{tikzpicture}
\draw[line width=1.5](0,0) --(2,0);
\draw(0,-0.3)node{$x_1$};
\draw(2,-0.3)node{$x_2$};
\draw(4,-0.3)node{$x_3$};
\draw (2,0)[line width=1.5] arc (120:60:2);
\draw (2,0)[line width=1.5] arc (-120:-60:2);
\draw(3,0.1)node{\textbf{$\vdots$}};
\draw(3,-0.8)node{$\ell$ arcs};
\begin{scope}
\draw [fill=black] (0, 0) circle (0.07);
\end{scope}
\begin{scope}[shift={(2, 0)}]
\draw [fill=white] (0, 0) circle (0.07);
\end{scope}
\begin{scope}[shift={(4, 0)}]
\draw [fill=black] (0, 0) circle (0.07);
\end{scope}
\draw(5,0)node{\textbf{$\cdots$}};
\end{tikzpicture}}
\end{align}
\item  There exist $n\geq2$, $a,b\geq1$, $a+b=n$, such that
\begin{align}\label{eq:principal exlcude}
\bigg(\frac{\ell^{2a-2}M^b}{(\delta^2-1)^{n-2}}+1  \bigg)
\bigg(\frac{\ell^{2b-2}M^a}{(\delta^2-1)^{n-2}}+1  \bigg)-\frac{\ell^{2n}}{\delta^{2n-4}}<0.
\end{align}
\end{enumerate}
In particular, let $a=b=2$. Suppose that $\ell^2\geq\delta$.
Then Inequality \eqref{eq:principal exlcude} is equivalent to
\begin{align}
\dim_{\mathbb{C}}{\mathscr{P}_{3,\pm}}<\dfrac{\delta^2-1}{\ell}\sqrt{\dfrac{\ell^4}{\delta^2}-1}+\ell^4+\ell^2+2.
\end{align}
So Theorem \ref{thm:infinite principle graph exclude} allows us to
eliminate a large family of bipartite graphs from principal
graphs of subfactors with certain dimension bounds.
\end{remark}

Next, we present a special case of Theorem \ref{thm:infinite principle graph exclude} when $a=b=2$ and $n=4$.
\begin{corollary}\label{cor:infinite principle graph exclude}
Suppose $\mathcal{G}$ is the bipartite graph defined in Theorem \ref{thm:infinite principle graph exclude}.  If 
\begin{align}\label{eq:dml}
\frac{\sqrt{2}M}{\ell}<d_2-\dfrac{1}{d_2}<d_2<\dfrac{\ell^2}{\sqrt{2}},
\end{align}
then 
$\mathcal{G}$ can not be a principal graph of a subfactor.
\begin{proof}
When $a=b=2$, inequality \eqref{eq:ab} is equivalent to 
\begin{align*}
\dfrac{\ell^{2}M^2}{(d_2^2-1)^{2}}+1<\dfrac{\ell^4}{d_2^{2}}.
\end{align*}
It is not difficult to check that inequality \eqref{eq:dml} indicates
\begin{align*}
\dfrac{\ell^{2}M^2}{(d_2^2-1)^{2}}<\dfrac{1}{2}\cdot\dfrac{\ell^4}{d_2^{2}},\quad \text{and}\quad 1<\dfrac{1}{2}\cdot\dfrac{\ell^4}{d_2^{2}}.
\end{align*}
Adding this two inequalities, we know that inequality \eqref{eq:ab} holds. So from Theorem \ref{thm:infinite principle graph exclude},
$\mathcal{G}$ can not be a principal graph of a subfactor.
\end{proof}
\end{corollary}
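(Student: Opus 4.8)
The plan is to obtain Corollary~\ref{cor:infinite principle graph exclude} as the special case $a=b=2$, $n=4$ of Theorem~\ref{thm:infinite principle graph exclude}, so that the entire task reduces to checking that the hypothesis \eqref{eq:dml} forces \eqref{eq:ab} to hold for these values of $a,b,n$. First I would substitute $a=b=2$ and $n=4$ into \eqref{eq:ab}. Then $\ell^{2a-2}=\ell^{2b-2}=\ell^{2}$, $M^{a}=M^{b}=M^{2}$, $(d_2^2-1)^{n-2}=(d_2^2-1)^{2}$ and $\ell^{2n}/d_2^{2n-4}=\ell^{8}/d_2^{4}$, so the two factors of the product coincide and \eqref{eq:ab} becomes
\[
\left(\frac{\ell^{2}M^{2}}{(d_2^2-1)^{2}}+1\right)^{2}-\left(\frac{\ell^{4}}{d_2^{2}}\right)^{2}<0 .
\]
Both quantities being squared are strictly positive (one uses here $d_2^2-1>0$, which follows from $x_2^2=1+\ell x_3$ with $\ell\geq1$ and $d_3>0$), so, factoring the difference of squares, this is equivalent to
\[
\frac{\ell^{2}M^{2}}{(d_2^2-1)^{2}}+1<\frac{\ell^{4}}{d_2^{2}} .
\]

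Next I would show that \eqref{eq:dml} implies the last inequality, by bounding each of the two terms on the left by $\tfrac{1}{2}\,\ell^{4}/d_2^{2}$. From the rightmost part $d_2<\ell^{2}/\sqrt{2}$ of \eqref{eq:dml}, squaring (all terms positive) gives $d_2^{2}<\ell^{4}/2$, i.e. $1<\tfrac{1}{2}\,\ell^{4}/d_2^{2}$. From the leftmost part $\sqrt{2}\,M/\ell<d_2-1/d_2=(d_2^2-1)/d_2$, squaring and rearranging gives $2M^{2}d_2^{2}<\ell^{2}(d_2^2-1)^{2}$, that is $\ell^{2}M^{2}/(d_2^2-1)^{2}<\tfrac{1}{2}\,\ell^{4}/d_2^{2}$. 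Adding these two strict inequalities yields $\frac{\ell^{2}M^{2}}{(d_2^2-1)^{2}}+1<\frac{\ell^{4}}{d_2^{2}}$, hence \eqref{eq:ab} holds with $a=b=2$, $n=4$, and Theorem~\ref{thm:infinite principle graph exclude} then gives the conclusion. The middle inequality $d_2-1/d_2<d_2$ of \eqref{eq:dml} is automatic and serves only to present the bounds as a chain; it plays no role in the argument.

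The hard part will be essentially nonexistent: the argument is elementary real-number bookkeeping. The only points deserving a word of care are the passage from \eqref{eq:ab} to the linearized inequality --- which rests on recognizing a difference of two squares together with the non-degeneracy $d_2>1$ --- and the legitimacy of squaring the two one-sided bounds in \eqref{eq:dml}, which is harmless since every term involved is positive. The genuine content has already been packaged into Theorem~\ref{thm:infinite principle graph exclude} (in particular the identity $d_3=(d_2^2-1)/\ell$ and the explicit local fusion matrices $M_i^S$ for $S=\{2,3\}$), so here there is nothing further to establish beyond the specialization and the two inequalities above.
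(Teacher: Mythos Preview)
Your proposal is correct and follows essentially the same approach as the paper: specialize Theorem~\ref{thm:infinite principle graph exclude} to $a=b=2$, $n=4$, reduce \eqref{eq:ab} to $\frac{\ell^{2}M^{2}}{(d_2^2-1)^{2}}+1<\frac{\ell^{4}}{d_2^{2}}$, and then bound each summand on the left by $\tfrac{1}{2}\,\ell^{4}/d_2^{2}$ using the two ends of \eqref{eq:dml}. Your extra remarks (the difference-of-squares justification, the positivity of $d_2^2-1$, and the observation that the middle inequality in \eqref{eq:dml} is automatic) are helpful details but do not change the route.
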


\begin{remark}\label{rem:n+1 stronger n}
Suppose that $M\leq A\ell^2$, where $A$ is a positive constant. If
\begin{align*}
\sqrt{2}A\ell<d_2<\dfrac{\ell^2}{\sqrt{2}},
\end{align*}
then inequality \eqref{eq:dml} holds. So $\mathcal{G}$ can not be a principal graph of a subfactor.
\end{remark}

\section{Questions}\label{sec:further}

Unitary multifusion categories are unitary multitensor categories \cite{Pen20} whose ranks could be infinite and the Grothendieck ring of a unitary multitensor categories is a based ring.
A natural question is whether a based ring is a Grothendieck ring of a unitary multitensor category.
Inspired by localized criteria for multifusion rings in Section \ref{sec:multifusion ring and principal graph}, we propose the following question.
\begin{question}
Suppose $\mathcal{R}$ is a based ring with basis $\{x_1, x_2,\ldots\}$.
Let $S$ be a finite subset of $\mathbb{N}$ and $M_j^S$ is the fusion matrix of $x_j$ restricted on $S$. 
If $\mathcal{R}$ admits a unitary multitensor categorification, does the following inequality hold for all $n \in \mathbb{N}$:
\begin{align}\label{eq:multi local}
\sum_{j} d_j^2\bigg(\frac{M_j^S}{d_j}\bigg)^{\otimes n}\geq 0?
\end{align}
\end{question}

Note that these unitary categorification criteria are also criteria of subfactorization of fusion bialgebras (See Definition 7.3 in \cite{LPW21}).
There are subfactorizable fusion rings that admit no unitary categorification. For example, the subfactor  planar algebra $\mathbb{Z}_{n}\ast TL(\delta)$, such that $\delta^2-1=\sqrt{\delta^2-1}+n$, is a subfactorization of the following near group fusion ring, 
$$\mathcal{R}_n:=\{b, g^k, k\in \mathbb{Z}_{n}: g^{n}=1, gb=bg=b, b^2=b+\sum_{k=1}^{n} g^k\}.$$ 
We see that $\mathcal{R}_n$ passes primary criteria because it is subfactorizable.  
However, $\mathcal{R}_n$ admits no unitary categorification when $n \geq 2$, due to Theorem 1.1 in \cite{Izu15}.

Bisch-Haagerup constructed the following sequence of fusion rings in 1994,
\begin{align*}
BH_r:=\left\langle \rho,\tau:\ \rho^2=\rho+1,\tau^2=1,(\rho\tau)^r=(\tau \rho)^r\right\rangle.
\end{align*}
and they asked that whether they have unitary categorifications. 
It had been widely believed that the whole sequence of fusion rings have unitary categorification. A surprising answer was given in \cite{Liu15} that $BH_r$ admits a unitary categorification if and only if $r=0,1,2,3$. (This is a slightly weaker version of the original question and answer on possible compositions of $A_3$ and $A_4$ subfactors.) It is natural to ask the following question.

\begin{question} \label{Q4}
For each $r\geq 4$, does $BH_r$ pass the primary $3$-criterion? If so, then is $BH_r$ subfactorizable?
\end{question}

\section{Appendix}\label{sec:Appendix}
\subsection{SageMath computations}
\footnotesize
\subsubsection{Computation for Proposition \ref{prop:nc6}} \label{App:SNC6}
\begin{verbatim}
sage: MM=[
....: [[1,0,0,0,0,0],[0,1,0,0,0,0],[0,0,1,0,0,0],[0,0,0,1,0,0],[0,0,0,0,1,0],[0,0,0,0,0,1]],
....: [[0,1,0,0,0,0],[1,4,2,2,2,2],[0,2,2,1,2,4],[0,2,1,2,4,2],[0,2,2,4,5,4],[0,2,4,2,4,5]],
....: [[0,0,1,0,0,0],[0,2,2,1,4,2],[0,1,3,1,3,3],[1,2,3,3,1,3],[0,2,3,3,5,4],[0,4,1,3,4,5]],
....: [[0,0,0,1,0,0],[0,2,1,2,2,4],[1,2,3,3,3,1],[0,1,1,3,3,3],[0,4,3,1,5,4],[0,2,3,3,4,5]],
....: [[0,0,0,0,1,0],[0,2,4,2,5,4],[0,4,1,3,5,4],[0,2,3,3,5,4],[1,5,5,5,5,7],[0,4,4,4,7,7]],
....: [[0,0,0,0,0,1],[0,2,2,4,4,5],[0,2,3,3,4,5],[0,4,3,1,4,5],[0,4,4,4,7,7],[1,5,5,5,7,5]]
....: ]
sage: L=[matrix(m) for m in MM]
sage: dim=[m.norm() for m in L]
sage: M=sum((L[i].tensor_product(L[i])).tensor_product(L[i])/(dim[i]) for i in range(6))
sage: E=M.eigenvalues()
sage: E[24]
-1.176375045085803
\end{verbatim}

\subsubsection{Computation for Subsection \ref{sub:S7}}\label{App:S7}
\begin{verbatim}
sage: MM=[[[1,0],[0,1]],[[1,1],[1,1]],[[1,1],[1,1]],
....: [[1,1],[1,1]],[[2,1],[1,2]],[[0,3],[3,1]],[[3,1],[1,2]]]
sage: L=[matrix(m) for m in MM]
sage: dim=[1,5,5,5,6,7,7][1,1],[5,2],[10,2],[11,1],[12,2]
sage: M=sum((L[i].tensor_product(L[i])).tensor_product(L[i])/(dim[i]) for i in range(7))
sage: E=M.eigenvalues()
sage: E[4]
-0.6294949095094869
\end{verbatim}

\subsubsection{Computation for Subsection \ref{sub:S8}}\label{App:S8}
\begin{verbatim}
sage: MM=[[[1,0],[0,1]],[[1,1],[1,1]],[[1,1],[1,1]],[[2,2],[2,2]],
....: [[2,2],[2,2]],[[3,2],[2,3]],[[0,5],[5,1]],[[5,1],[1,4]]]
sage: L=[matrix(m) for m in MM]
sage: dim=[1,5,5,10,10,11,12,12]
sage: M=sum((L[i].tensor_product(L[i])).tensor_product(L[i])/(dim[i]) for i in range(8))
sage: E=M.eigenvalues()
sage: E[4]
-2.948812175750019
\end{verbatim}

\subsubsection{Computation for Lemma \ref{lem:1}} \label{sub:1}
\begin{verbatim}
sage: MM=[[[1,0],[0,1]],[[1,1],[1,1]],[[1,1],[1,1]],
....: [[1,1],[1,1]],[[2,1],[1,2]],[[0,3],[3,1]],[[3,1],[1,2]]]
sage: L=[matrix(m) for m in MM]
sage: var('x2,x3,x4,x5,x6,x7')
sage: dim=[1,x2,x3,x4,x5,x6,x7]
sage: M=sum((L[i].tensor_product(L[i])).tensor_product(L[i])/(dim[i]) for i in range(7))
sage: N=M[:7,:7]
sage: N.determinant().factor()
(x2*x3*x4*x5^3*x6^2*x7^3 + 67*x2*x3*x4*x5^3*x6^2*x7^2 + 18*x2*x3*x4*x5^3*x6*x7^3
+ 32*x2*x3*x4*x5^2*x6^2*x7^3 + 7*x2*x3*x5^3*x6^2*x7^3 + 7*x2*x4*x5^3*x6^2*x7^3
+ 7*x3*x4*x5^3*x6^2*x7^3 + 1025*x2*x3*x4*x5^3*x6^2*x7 + 216*x2*x3*x4*x5^3*x6*x7^2
+ 962*x2*x3*x4*x5^2*x6^2*x7^2 + 172*x2*x3*x5^3*x6^2*x7^2 + 172*x2*x4*x5^3*x6^2*x7^2
+ 172*x3*x4*x5^3*x6^2*x7^2 - 729*x2*x3*x4*x5^3*x7^3 - 126*x2*x3*x4*x5^2*x6*x7^3
- 90*x2*x3*x5^3*x6*x7^3 - 90*x2*x4*x5^3*x6*x7^3 - 90*x3*x4*x5^3*x6*x7^3
+ 195*x2*x3*x4*x5*x6^2*x7^3 + 54*x2*x3*x5^2*x6^2*x7^3 + 54*x2*x4*x5^2*x6^2*x7^3
+ 54*x3*x4*x5^2*x6^2*x7^3 + 3375*x2*x3*x4*x5^3*x6^2 - 7290*x2*x3*x4*x5^3*x6*x7
+ 4600*x2*x3*x4*x5^2*x6^2*x7 + 665*x2*x3*x5^3*x6^2*x7 + 665*x2*x4*x5^3*x6^2*x7
+ 665*x3*x4*x5^3*x6^2*x7 - 19683*x2*x3*x4*x5^3*x7^2 - 8478*x2*x3*x4*x5^2*x6*x7^2
- 2214*x2*x3*x5^3*x6*x7^2 - 2214*x2*x4*x5^3*x6*x7^2 - 2214*x3*x4*x5^3*x6*x7^2
+ 1833*x2*x3*x4*x5*x6^2*x7^2 + 432*x2*x3*x5^2*x6^2*x7^2 + 432*x2*x4*x5^2*x6^2*x7^2
+ 432*x3*x4*x5^2*x6^2*x7^2 - 5832*x2*x3*x4*x5^2*x7^3 - 729*x2*x3*x5^3*x7^3
- 729*x2*x4*x5^3*x7^3 - 729*x3*x4*x5^3*x7^3 - 1728*x2*x3*x4*x5*x6*x7^3
- 594*x2*x3*x5^2*x6*x7^3 - 594*x2*x4*x5^2*x6*x7^3 - 594*x3*x4*x5^2*x6*x7^3
+ 216*x2*x3*x4*x6^2*x7^3 + 63*x2*x3*x5*x6^2*x7^3 + 63*x2*x4*x5*x6^2*x7^3
+ 63*x3*x4*x5*x6^2*x7^3)*(x5^2*x6^2*x7^2 + 25*x5^2*x6^2*x7 - 9*x5^2*x6*x7^2
+ 12*x5*x6^2*x7^2 + 125*x5^2*x6^2 + 135*x5^2*x6*x7 + 120*x5*x6^2*x7 - 729*x5^2*x7^2
+ 108*x5*x6*x7^2 + 27*x6^2*x7^2)^2/(x2*x3*x4*x5^7*x6^6*x7^7)
sage: E=N.eigenvalues()
sage: E[4].factor()
1/2*(2*x5*x6*x7 + 25*x5*x6 - 9*x5*x7 + 12*x6*x7 - sqrt(125*x5^2*x6^2 + 9*(333*x5^2 - 72*x5*x6
+ 4*x6^2)*x7^2 - 30*(33*x5^2*x6 - 4*x5*x6^2)*x7))/(x5*x6*x7)
sage: S=solve((2*x5*x6*x7 + 25*x5*x6 - 9*x5*x7 + 12*x6*x7)^2 - 125*x5^2*x6^2 + 9*(333*x5^2
 - 72*x5*x6\ + 4*x6^2)*x7^2 - 30*(33*x5^2*x6 - 4*x5*x6^2)*x7,x6)
sage: SS=solve(x5^2*x6^2*x7^2 + 25*x5^2*x6^2*x7 - 9*x5^2*x6*x7^2 + 12*x5*x6^2*x7^2
+ 125*x5^2*x6^2\ + 135*x5^2*x6*x7 + 120*x5*x6^2*x7 - 729*x5^2*x7^2 + 108*x5*x6*x7^2
 + 27*x6^2*x7^2,x6)
sage: S==SS
True
\end{verbatim}
\subsubsection{Computation for Remark \ref{rk:1}} \label{sub:2}
\begin{verbatim}
sage: ddim=[1,5,5,5,6,x6,7]
sage: MM=sum((L[i].tensor_product(L[i])).tensor_product(L[i])/(ddim[i]) for i in range(7))
sage: MM.determinant().factor()
1/22136835840*(210556551*x6^4 - 966777308*x6^3 - 19784561832*x6^2 + 60951456720*x6
+ 306237561840)*(2495*x6^2 + 5544*x6 - 142884)^2/x6^8
sage: S=solve(210556551*x6^4 - 966777308*x6^3 - 19784561832*x6^2 + 60951456720*x6
 + 306237561840,x6)
sage: [s.rhs().n(digits=40) for s in S]
[-8.335767036491464773340488283566158298175 - 2.116481932246399247255558980887840928053e-42*I,
 -2.911937593466257812583289613470318862572 + 2.116481932246399247255558980887840928053e-42*I,
 6.245789619885724844645019429070095813233 - 3.429094729194210149876597305845817158163e-42*I,
 9.593447799759457999876934309097061722308 + 3.429094729194210149876597305845817158163e-42*I]
\end{verbatim}
The complex parts above are zero (they appear just due to numerical approximations).

\subsubsection{Computation for Remark \ref{rk:2}} \label{sub:3}
\begin{verbatim}
sage: R = ZZ['x6']
sage: x6 = R.gen()
sage: P=210556551*x6^4 - 966777308*x6^3 - 19784561832*x6^2 + 60951456720*x6 + 306237561840
sage: P.is_irreducible()
True
sage: a=421113102
....: b=44334061169015601
....: c=250462485504
....: d=345734333761887148583413
....: e=5213988190704773354123819324759655
....: f=64964979666121194605838007808
....: g=149108026701745210176
....: h=3010843992410706004
....: i=16567558522416134464
....: j=4926006796557289
....: k=1151178558485955841275891856
....: l=6021687984821412008
....: m=241694327
....: n=210556551
....: A=(c/d*I*sqrt(e) + f/d)^(1/3)
....: B=sqrt(b*A + g/A + h)
....: alpha = 1/a*B+ 1/2*sqrt(-A - i/j/A - k/b/B+ l/b) + m/n
sage: str(S[3].rhs().expand())==str(alpha)
True
\end{verbatim}
\subsection{Principal Graphs}\label{Sec: Principal Graph}
For the definition of fusion category and multifusion category, we refer the readers to  \cite{ENO05} and \cite[Section 1.12]{EGNO15}. 
The Grothendieck ring of a (multi)fusion category is a (multi)fusion ring.
A unitary (multi)fusion category is a (multi)fusion category with a unitary structure, see e.g. \cite[Remark 9.4.7]{EGNO15}.

 Let $\mathcal{N}\subseteq \mathcal{M}$ be a finite index subfactor.  Its subfactor planar algebra captures the bimodule category of the subfactor, $\{ _{\mathcal{N}}\mathsf {Mod}_\mathcal{N}$, $_{\mathcal{N}}\mathsf {Mod}_\mathcal{M}$, $_{\mathcal{M}}\mathsf {Mod}_\mathcal{M}$, $_{\mathcal{M}}\mathsf {Mod}_\mathcal{N} \}$.
This bimodule category is a unitary multitensor category with the $0$-morphisms being factors $\mathcal{N}$ and $\mathcal{M}$; the $1$-morphisms being bimodules over factors; and the $2$-morphisms being the bimodule homomorphisms. 
The subcategories $_{\mathcal{N}}\mathsf {Mod}_\mathcal{N}$ and 
$_{\mathcal{M}}\mathsf {Mod}_\mathcal{M}$ are unitary tensor categories. 
If they have finitely many simple objects, then the subfactor is called finite depth.
Take the generating bimodule $\tau=\sideset{_\mathcal{N}}{}{\mathop{\mathcal{M}}}_\mathcal{M}$.  Its conjugate $\overline{\tau}$ is the bimodule 
$\sideset{_\mathcal{M}}{}{\mathop{\mathcal{M}}}_\mathcal{N}$. Moreover, $\gamma=\tau\overline{\tau}=\sideset{_\mathcal{N}}{}{\mathop{\mathcal{M}}}_\mathcal{N}$. It defines a Frobenius algebra $(\gamma, m, \iota)$ in $_{\mathcal{N}}\mathsf {Mod}_\mathcal{N}$, where $m\in \hom(\gamma^2,\gamma)$ is the multiplication on $\mathcal{M}$ and $\iota$ is the inclusion from $\mathcal{N}$ to $\mathcal{M}$.
Moreover, $\tilde{\gamma}=\overline{\tau}\tau$ induces a Frobenius algebra in $_{\mathcal{M}}\mathsf {Mod}_\mathcal{M}$.
Conversely, given a Frobenius algebra in a unitary tensor category, one can construct a subfactor planar algebra as above, see e.g. \cite{Mug03}.

\begin{align*}
\raisebox{-0.85cm}{
\begin{tikzpicture}
\begin{scope}[shift={(0, 0.3)}]
\draw  (0, 0) .. controls +(0, 0.5) and +(0, 0.5) .. (3, 0)[->];
\end{scope}
\begin{scope}[shift={(0, -0.3)}]
\draw  (0, 0)[<-] .. controls +(0, -0.5) and +(0, -0.5) .. (3, 0);
\end{scope}
\node at (0,0) {\small $_{\mathcal{N}}\mathsf {Mod}_\mathcal{N}$};
\node at (3,0) {\small $_{\mathcal{N}}\mathsf {Mod}_\mathcal{M}$};
\node at (1.5,1) {\tiny $\otimes _{\mathcal{N}}\mathcal{M}_\mathcal{M}$};
\node at (1.5,-1) {\tiny $\otimes _{\mathcal{M}}\mathcal{M}_\mathcal{N}$};
\end{tikzpicture}}
\end{align*}

The principal graph of the subfactor $\mathcal{N}\subseteq \mathcal{M}$ is a bipartite graph. Its
vertices are equivalence classes of irreducible bimodules over $(\mathcal{N},\mathcal{N})$ and $(\mathcal{N},\mathcal{M})$. 
The number of edges connecting two vertices, an $(\mathcal{N},\mathcal{N})$
bimodule $Y$ and an $(\mathcal{N},\mathcal{M})$ bimodule $Z$, is the multiplicity of the equivalence class of $Z$
as a sub bimodule of $Y \otimes X$, where $X=\sideset{_\mathcal{N}}{}{\mathop{\mathcal{M}}}_\mathcal{M}$. The dimension vector of the bipartite graph is a function  from
the vertices of the graph to $\mathbb{R}^+$. Its value at a vertex is defined to be the dimension of
the corresponding bimodule. The dual principal graph is defined similarly for $(\mathcal{M},\mathcal{M})$ and $(\mathcal{M},\mathcal{N})$ bimodules. 
The adjacent matrix of the principal graph is a submatrix of the fusion matrix of $X$.


\subsection{Quantum Double Construction}\label{Sec: Quantum Double}
 Given a unitary fusion category, one can obtain a subfactor planar algebra from the quantum double construction. 
Here, we recall the construction with notations in \cite{LiuXu19}. Let $\mathscr{C}$ be a unitary fusion category and $\Irr=\{X_1=1, \ldots, X_m\}$ the set of all simple objects.
For each object $X_j$, we denote by $d_j:=\FPdim(X_j)$ the Frobenius-Perron dimension of $X_j$.
Denote by $\FPdim(\mathscr{C})$ the dimension of $\mathscr{C}$, which is $\displaystyle \sum_{j=1}^m \FPdim(X_j)^2$.
Let $\mathscr{C}^{op}$ be the opposite category of $\mathscr{C}$.
Then 
\begin{align*}
\gamma= \bigoplus_{X\in \Irr} X\boxtimes X^{op}
\end{align*}
is a Frobenius *-algebra in $\mathscr{C}\boxtimes \mathscr{C}^{op}$, where the multiplication $\Mu$ is given by
\begin{align*}
\Mu= \FPdim(\mathscr{C})^{1/4} \bigoplus_{j,k,t=1}^m (d_jd_kd_t)^{1/2} \sum_{\alpha\in ONB(X_j, X_k; X_t)} \alpha \boxtimes \overline{\alpha}
\end{align*}
and the canonical unit is $\iota: 1\boxtimes 1^{op} \to \gamma$; $ONB(X_j, X_k; X_t)$ is an orthonormal basis
of $\hom_{\mathscr{C}}(X_j\otimes X_k,X_t)$.
One can construct a subfactor planar algebra $\sP$ with a simple object $\tau$ associated to a single string such that $\gamma=\tau \overline{\tau}$.
The single quon space $\sP_{2,+}:=\hom_{\sP}(\tau\overline{\tau},\tau\overline{\tau})\cong\hom_{\mathscr{C}\boxtimes \mathscr{C}^{op}} (\gamma, \gamma)$ has an orthonormal basis 
$\{\beta_j\}_{j=1}^m$, where $\displaystyle \beta_{j}=d_j^{-1} \mathbbm{1}_{ X_j\boxtimes X_j^{op}}$, and $\mathbbm{1}_{ X_j\boxtimes X_j^{op}}$ is the identity map on $X_j\boxtimes X_j^{op}$. The morphism $\beta_j$ is represented as a square-like diagram in the planar algebra
\begin{align*}
\beta_j=\raisebox{-0.6cm}{
\begin{tikzpicture}
\begin{scope}[rotate=90]
\draw (-0.4, 0.3)--(0,0)--(0.5,0)--(0.9, 0.3) (-0.4, -0.3)--(0, 0) (0.9, -0.3)--(0.5, 0);
\end{scope}
\node at (0.1,0.25) {\tiny $j$};
\end{tikzpicture}}.
\end{align*}
The multiplication of the morphisms is a vertical composition and the convolution is a horizontal composition. Moreover, (Propositions 4.1, 4.2 in \cite{LiuXu19})
\begin{align*}
\beta_{j} \beta_{k}&=\delta_{j,k} d_{j}^{-1} \beta_{j} \;,\\
\beta_{j} *\beta_{k}&=\delta^{-1}\sum_{s=1}^m N_{j,k}^t \beta_{t} \;,
\end{align*}
where $\delta_{j,k}$ is the Kronecker delta function, $\displaystyle \delta^2=\sum_{j=1}^m d_j^2$ is the Jones index of the subfactor planar algebra, identical to the Frobenius-Perron dimension of $\mathscr{C}$.
Moreover, $\sP_{2,-}:=\hom_{\sP}( \overline{\tau}\tau, \overline{\tau}\tau)\cong\hom_{Z(\mathscr{C})} (\tilde{\gamma}, \tilde{\gamma})$, where $Z(\sC)$ is the Drinfeld center of $\sC$ and $\tilde{\gamma}=\overline{\tau}\tau$ induces a Frobenius algebra in $Z(\sC)$. Furthermore, $Z(\mathscr{C})$ and $\mathscr{C}\boxtimes \mathscr{C}^{op}$ are Morita equivalent w. r. t. the Frobenius algebras, see e.g. \cite{MugII03}. 

The Fourier transform $\fF_s: \sP_{2,\pm}\to \sP_{2,\mp}$ is a clockwise $90^{\circ}$ rotation, which intertwines the multiplication and the convolution.
Pictorially,
\begin{align*}
\fF_s(\beta_j)=\raisebox{-0.25cm}{
\begin{tikzpicture}
\draw (-0.4, 0.3)--(0,0)--(0.5,0)--(0.9, 0.3) (-0.4, -0.3)--(0, 0) (0.9, -0.3)--(0.5, 0);
\node at (0.25, 0.2) {\tiny $j$};
\end{tikzpicture}},
\end{align*}
and 
\begin{align}
\label{eq: 1}
\fF_s(\beta_{j}) * \fF_s(\beta_{k})&=\delta_{j,k} d_{j}^{-1} \fF_s(\beta_{j}) \;,\\
\label{eq: 2}
\fF_s^{-1}(\beta_{j}) \fF_s^{-1}(\beta_{k}) &=\delta^{-1}\sum_{t=1}^m N_{j,k}^t \fF_s^{-1}(\beta_{t}) \;.
\end{align}
By Equation \eqref{eq: 2}, $\sP_{2,-}$ is isomorphic to the $C^*$ algebra $\mathcal{R}\otimes_\mathbb{Z} \mathbb{C}$.

One can construct a subfactor planar algebra from a unitary multifusion categories following the quantum double construction in a similar way.

 \bibliographystyle{plain}

\end{document}